\newtheorem{thm}{Theorem}[section]
\newtheorem*{thm*}{Theorem}
\newtheorem{lem}[thm]{Lemma}
\newtheorem{prop}[thm]{Proposition}
\newtheorem{defn}[thm]{Definition}
\newtheorem{cor}[thm]{Corollary}
\theoremstyle{definition}
\newtheorem{rem}[thm]{Remark}
\newtheorem{ex}[thm]{Example}
\newtheorem{conj}[thm]{Conjecture}
\newtheorem{alphatheorem}{Theorem}
\def\R{{\mathbb{R}}}
\begin{document}

\title{Functional Liftings of Restricted Geometric Inequalities}

\author[A. Malliaris]{Andreas Malliaris}
\address{Institut de Math\'ematiques de Toulouse (UMR 5219). University of Toulouse \& CNRS. UPS, F-31062
Toulouse Cedex 09, France.}
\email{andreas.malliaris@math.univ-toulouse.fr}

\author[J. Melbourne] {James Melbourne}
\address{Department of Probability and statistics, Centro de Investigacion En Matem\'aticas ´
(CIMAT), Mexico.}
\email{james.melbourne@cimat.mx}

\author[C. Roberto]{Cyril Roberto}
\address{MODAL’X, UPL, Univ. Paris Nanterre, CNRS, F92000 Nanterre France.}
\email{croberto@math.cnrs.fr}

\author[M. Roysdon]{Michael Roysdon}
\address{Department of Mathematics, Applied Mathematics, and Statistics, Case Western Reserve University, Cleveland, OH 44106, USA \& Department of Mathematical Sciences, University of Cincinnati, Cincinnati, OH 45221, USA.}
\email{roysdoml@ucmail.uc.edu}

\begin{abstract}
    We investigate what we term ``generalized sup-convolutions".  We show that  functional inequalities that enjoy an interpretation as sup-convolution inequalities can be deduced from the special case of indicator functions corresponding to a geometric inequality.  As consequences we derive a Borell-Brascamp Lieb inequality for the Gaussian Brunn-Minkowski inequality and give a functional analog of the log-Brunn-Minkowski conjecture. Though we focus on Euclidean applications, our results are general and can be directly applied in more abstract settings, like groups or even topological measure spaces without algebraic structure, we instantiate this claim with a Borell-Brascamp-Lieb type inequality for nilpotent Lie groups.
\end{abstract}
\maketitle

\section{Introduction}

    In this article we investigate the equivalence of certain types of geometric inequalities and their functional analogs, each of which can be reviewed as reverse H\"older type inequalities.  Our setting will be abstract and rather flexible, but let us start with an example: the well-known equivalence of the celebrated Brunn-Minkowski inequality (see \cite{Sh1}), a linchpin for modern convex geometry, and its functional version, the  well-known Pr\'ekopa-Leindler inequality (see \cite{Prekopa,Leindler}),  possessing broad applications in various fields throughout mathematics. We now present these two inequalities.
    
    The Brunn-Minkowski inequality asserts that, for any pair of Borel sets $A, B \subset \R^n$, the following volume inequality 
    \begin{align} \label{eq: BMI}
        |A + B|^{\frac 1 n} \geq |A|^{\frac 1 n} + |B|^{\frac 1 n},
    \end{align} 
    where  $|\cdot|$ (or $|\cdot|_n$) is the $n$-dimensional Lebesgue volume, is valid. 
    The Pr\'ekopa-Leindler inequality asserts that for any triple of non-negative measurable functions $f, g$ and $h$ on $\mathbb{R}^n$ such that
    \begin{align} \label{eq: PLI hypothesis}
        h((1-t)x + ty) \geq f^{1-t}(x) g^t(y), \qquad \mbox{for some } t \in [0,1] \mbox{ and all } x,y \in \mathbb{R}^n 
    \end{align}
    the integrated inequality
 \begin{equation}
     \label{eq:PL}
          \int h \geq \left( \int f \right)^{1-t} \left( \int g \right)^t
 \end{equation}
 holds.

    The numerous extensions and the ramifications of the Brunn-Minkowski and Pr\'ekopa-Leindler inequality has had in various fields of mathematics are detailed in the survey of Gardner \cite{G20}.
    An exhaustive list of references is beyond the scope of this paper. We would like to highlight two very important and classical papers of Borell \cite{borell1975convex}, and Brascamp and Lieb \cite{brascamp1976best}, where generalizations of both of these inequalities have appeared; such inequalities have become referred to as ``Borell-Brascamp-Lieb" and ``Brascamp-Lieb" inequalities. More recent examples of such works, as well as modern treatments, can be found in the books \cite{AGA,AGA2,Sh1}.

   We now return to our original point of investigation: the equivalence of the inequalities \eqref{eq: BMI} and \eqref{eq:PL}. 
   Taking $f = \mathbbm{1}_A$ and $g = \mathbbm{1}_B$, in \eqref{eq:PL}, yields 
\begin{equation}
    \label{eq:BMdimensionfree}
    |(1-t) A + tB| \geq |A|^{1-t}|B|^t
\end{equation}
    which, after a clever choice of $t$, recovers \eqref{eq: BMI}.  The converse direction can be proven by showing that Brunn-Minkowski implies Pr\'ekopa-Leindler in one-dimension and then using the tensorization property of \eqref{eq:PL} to recover the $n$-dimensional functional inequality, or alternatively interpreting the integrals in \eqref{eq:PL} as the volumes of $(n+1)$-dimensional sets.  

    However, in more general settings when a geometric inequality holds only for a restricted class of sets and/or a particular measure on a vector space or group, it can be less clear how to implement such approaches.  For example, the conjecture of Gardner and Zvavitch \cite{GZ10} asked if \begin{equation}
\label{eq:GaussianDimensional}
\gamma((1-t)A+tB) \geq\left [(1-t) \gamma(A)^{\frac{1}{n}} + t \gamma(B)^{\frac{1}{n}}\right]^n
\end{equation}
 held for $A,B$ origin-symmetric convex bodies in $\R^n$ (see Section~\ref{sec:Abstract}) and $\gamma$ the standard Gaussian measure on $\mathbb{R}^n$. By Borell \cite{borell1975convex}, an $n$-dimensional probability measure that satisfies \eqref{eq:GaussianDimensional} for all $A,B$ Borel sets is necessarily the uniform measure on a bounded convex set, thus such an inequality necessarily fails for $\gamma$ and general $A,B$. Nevertheless this question was answered in the affirmative by Eskenazis and Moschidis \cite{eskenazis2021dimensional}.

  More generally, it is conjectured that an improvement like \eqref{eq:GaussianDimensional}, in the class of symmetric convex bodies, holds for any even log-concave measure $\mu$ on $\mathbb{R}^n$. The most general result in this direction is due to Cordero-Erausquin and Rotem \cite{cordero2023improved}, where it is proved that \eqref{eq:GaussianDimensional} holds for rotationally invariant measures $d\mu(x)=e^{-w(|x|)}dx,$ where $w:[0,\infty)\to (-\infty,\infty]$ is increasing with $\mathbb{R}\ni t \mapsto w(e^t)$ convex. Note that this class goes beyond log-concavity as it contains, e.g., the Cauchy measure.

 Our first result is a functional version of the result in \cite{cordero2023improved}. We say that a function $f \colon \R^n \to \R$ unimodal if its super-level sets $\{f \geq t\}$ are convex bodies (see Section~\ref{sec:general}).

 \begin{thm} \label{thm: rotationally invariant BBL for unimodal}
     Let $\mu$ be the measure on $\mathbb{R}^n$, with density $e^{-w(|x|)}$, where $w:[0,\infty)\to (-\infty,\infty]$ is increasing and $\mathbb{R}\ni t \mapsto w(e^t)$ is convex. Let $t\in (0,1)$, $\alpha\geq -\frac{1}{n}$. If $f,g:\mathbb{R}^n\to [0,\infty)$ are even unimodal and $h:\mathbb{R}^n\to [0,\infty)$ is Borel measurable satisfying 
     \begin{equation}
         h((1-t)x+ty) \geq [(1-t)f(x)^\alpha + tg(y)^\alpha]^\frac{1}{\alpha}, \qquad \qquad \forall x,y\in \mathbb{R}^n,
     \end{equation}
     then 
     \begin{equation} \label{eq:Theorem1.1}
         \int_{\mathbb{R}^n}h\ d\mu \geq \left( (1-t)\left(  \int_{\mathbb{R}^n}f\ d\mu\right)^\beta+t\left( \int_{\mathbb{R}^n}g\ d\mu \right)^\beta  \right)^{\frac{1}{\beta}},
     \end{equation}
     where $\beta=\frac{\alpha}{1+n\alpha}$. Moreover, \eqref{eq:Theorem1.1} is equivalent to the geometric inequality
\begin{equation} \label{dimensional BM rot inv}
    \mu\left((1-t)K+tL\right)^{\frac{1}{n}}\geq (1-t)\mu(K)^{\frac{1}{n}}+t\mu(L)^{\frac{1}{n}},
\end{equation}
     for all $K,L\subset\R^n$ symmetric convex bodies.
 \end{thm}

Very recently, Cordero-Erausquin and Eskenazis \cite{CE2025} found a functional analogue of \eqref{dimensional BM rot inv} by following a novel geometric way to use the so-called $L^2$-method. In Section \ref{sec:mesures}, we explain how Theorem \ref{thm: rotationally invariant BBL for unimodal} implies their main result, while answering \cite[Question 19]{CE2025} in the radially symmetric case.

In another recent work \cite{AD25} functional analogues of the said geometric inequalities for smaller class of measures were proven via completely different tools coming from information theory. Specializing Theorem~\ref{thm: rotationally invariant BBL for unimodal}  to the Gaussian measure we are able to generalize \cite[Theorem~1.7]{AD25} when $f,g$ are assumed to be $\log$-concave and $\alpha \geq 0.$
 

\begin{cor}\label{thm: Gaussian Borell Brascamp Lieb} Let $\gamma$ be the standard Gaussian probability measure on $\R^n$,  $t \in [0,1]$ and $\alpha \geq - \frac{1}{n}$. Let  $f,g \colon \R^n \to [0,\infty)$ be even, unimodal functions. Assume that $h \colon \R^n \to [0,\infty)$ is any Borel measurable function for which 
\[
h((1-t)x+ty) \geq [(1-t)f(x)^\alpha + tg(y)^\alpha]^\frac{1}{\alpha}
\]
holds for all $x,y \in \R^n$. Then 
\[
\int_{\R^n}h d\gamma \geq \left[(1-t) \left(\int_{\R^n} f d \gamma\right)^\beta + t   \left(\int_{\R^n} g d \gamma\right)^\beta\right]^\frac{1}{\beta},
\]
where $\beta= \frac{\alpha}{1+n\alpha}$. Moreover, this functional inequality is equivalent to \eqref{eq:GaussianDimensional}. 
\end{cor}

We also recover \cite[Theorem~1.8]{AD25}, a functional analogue of a dimensional Brunn--Minkowski type inequality for star bodies, see Section \ref{sec:mesures}.


Our approach\footnote{We direct the reader to \cite{DarioNote} for other approaches.} can be traced back at least to the 1956 proof of the Brunn-Minkowski inequality by Hadwidger and Ohmann.  It was their ``slicing and balancing'' of sets that we adapted to functions.  In particular to what we will refer to as sup-convolutions\footnote{Note that this a multiplicative version of what is usually called a sup-convolution, when $f = e^{-V}, g = e^{-W}$, 
    \[
        \sup_{(1-t) x + ty = z} f^{1-t}(x) g^t(y) = \exp \left\{ - \inf_{(1-t)x + ty = z} \bigg[ (1-t) V(x) + t W(y) \bigg] \right\}.
    \]
    The quantity  $\inf_{(1-t)x + ty = z} \bigg[ (1-t) V(x) + t W(y) \bigg]$ is  referred to as the inf-convolution of $V$ and $W$. }. For instance in the $\alpha = 0$ case, given a pair of non-negative measurable functions $f,g \colon \R^n \to [0,\infty)$, we set
\begin{equation}
\label{eq:operationexample}
f \square g(z) \coloneqq \sup_{\genfrac{}{}{0pt}{}{x,y:}{(1-t) x + ty = z}} f^{1-t}(x) g^t(y).
\end{equation}
    That is, $f \square g$ is the smallest function that satisfies \eqref{eq: PLI hypothesis}. Indeed, any $h$ satisfying \eqref{eq: PLI hypothesis}, satisfies $h(z) \geq f \square g(z) \geq f^{1-t}(x) g^{t}(y)$ for $(1-t)x +ty = z$, so that (neglecting potential measurability issues) we have 
    \[
        \int h \geq \int f\square g \geq \left( \int f \right)^{1-t} \left( \int g \right)^{t}.
    \]
    A property of the $\square$ operation that proves pivotal, beyond some reasonably expected monotonicity and measurability (see Section \ref{sec:general} for a complete development), is a somewhat peculiar, restricted reverse triangle inequality. That is given two pairs of functions $(f_0,g_0)$ and $(f_1,g_1)$ such that $f_1(x) > 0$ implies that $f_0(x) \geq f_0(y)$ for all $y \in \mathbb{R}^n$, and $g_1(x) >0$ implies $g_0(x) \geq g_0(y)$ for all $y \in \mathbb{R}^n$, then
     \[
         (f_0 + f_1) \square (g_0 + g_1) \geq f_0 \square g_0 + f_1 \square g_1.
     \]

A famous conjecture in the geometric of convex bodies is the $\log$-Brunn-Minkowski conjecture put forth by B\"or\"oczky, Lutwak, Yang and Zhang in \cite{BLYZ12}: given a pair of origin symmetric convex body $K,L \subset \R^n$ and $t \in (0,1)$, is it necessarily true that 
\begin{equation}\label{e:logBM}
|K^{1-t}L^t| \geq |K|^{1-t}|L|^t?
\end{equation}
Here $K^{1-t}L^t$ is the $L_0$-Minkowski combination given by 
\[
K^{1-t} L^t :=\bigcap_{\theta \in S^{n-1}} \left\{x \in \R^n \colon \langle x,\theta\rangle \leq h_K(\theta)^{1-t}h_L^t(\theta)\right\}
\]
where $h_K, h_L$ are the support functions of $K$ and $L$ respectively, defined in \eqref{eq:support}.
Owing to the inclusion $K^{1-t}L^t \subset (1-t)K+tL$, and the homogeneity of the volume, it is readily clear that \eqref{e:logBM} implies \eqref{eq: BMI}. For more details see Section~\ref{sec:lp}. 

Our main contribution to the $\log$-Brunn-Minkowski conjecture is the following equivalence between the inequality \eqref{e:logBM} and what we call the ``$\log$-Pr\'ekopa-Leindler'' inequality.

\begin{thm}\label{thm:logPLintro} Let $t \in [0,1]$. Then the Lebesgue measure satisfies inequality \eqref{e:logBM} for all origin symmetric convex bodies $K$ and $L$, if and only if, for a triple of integrable functions $f,g,h  \colon \R^n \to [0,\infty)$, with $f,g$ even and unimodal functions, such that
\[
h(z) \geq  f(u)^{1-t} g(v)^t,
\]
for $u,v \in C(z;f,g)$, where
\[
C(z;f,g) \coloneqq \{(u,v) \colon z \in \{f \geq f(u)\}^{1-t}  \{g \geq g(v)\}^t\},
\]
implies
\begin{equation} \label{e:logPL}
    \int_{\mathbb{R}^n} h \geq \left( \int_{\mathbb{R}^n} f \right)^{1-t} \left( \int_{\mathbb{R}^n} g \right)^t.
\end{equation}
\end{thm}
That is, Theorem \ref{thm:logPLintro} gives an equivalent functional version of the celebrated $\log$-Brunn-Minkowski conjecture of B\"or\"oczky, Lutwak, Yang, and Zhang \cite{BLYZ12}. In particular, Theorem~\ref{thm:logPLintro} affirmatively answers a question posed by Andrea Colesanti ``Is there a functional version of the log-Brunn-Minkowski inequality?'' at the conference {\it 61 Probability Encounters, in honour of Sergey Bobkov} held at Paul Sabatier University, Toulouse, France in the summer of 2023 \cite[Main Question]{ColesantiSlides}.   

Inequality \eqref{e:logBM} has been verified in several cases. In particular, it has been verified for $n=2$ in the original paper \cite{BLYZ12}. As consequence of their result and that of Theorem~\ref{thm:logPLintro}, the inequality \eqref{e:logPL} holds when $n =2$ for all even unimodal functions.


In general, our techniques extend inequalities for classes of sets to the class of functions whose sup level sets belong to said sets.  For instance in the unconditional case\footnote{A function $f \colon \R^n \to \R$ is said to be unconditional if, for each $\varepsilon_i \in \{-1,1\}$ $i=1,\dots,n$, and $x=(x_1,\dots,x_n) \in \R^n$, $f(\varepsilon_1x_1,\dots,\varepsilon_n x_n) = f(x)$; a set is unconditional if its characteristic function is an unconditional function.}, where the log-Brunn-Minkowski conjecture has been verified (see \cite{CS15}), we have the  equivalence\footnote{We thank Franck Barthe for the observation that inequality \eqref{e:logPL} can be deduced from \eqref{eq:PL} by a well known change of variables (see \cite{CS15} for more details) in the unconditional case. 
} between the $\log$-Pr\'ekopa-Leindler inequality for unconditional functions, inequality \eqref{e:logPL}, and the $\log$-Brunn-Minkowski \eqref{e:logBM} for $K$ and $L$ unconditional.

As mentioned in the abstract the techniques developed are not limited to Euclidean spaces.  In fact, the ideas here are inspired by, and could be used to recover, the proof of a Pr\'ekopa-Leindler type inequality on the integers, by the second author and Marsiglietti \cite[Theorem 3.8]{marsiglietti2024geometric}
(see also \cite{klartag-lehec,GRST} for more on discrete Prekopa-Leindler-type inequalities).  To further hint at broader application, we also give a Borell-Brascamp-Lieb inequality for nilpotent groups, generalizing Tao \cite{Tao2011}, which in turn generalized the case given for the Heisenberg group \cite{Monti03}.
\begin{thm}
    Let $G$ be a simply connected nilpotent Lie group of (topological) dimension $d$ with a Haar measure $\mu$ and $\mathcal{M}_p^{(t)}(u,v)$ the $t$-weighted $p$-mean\footnote{We refer to the next section and more precisely to \eqref{eq:pmeans} for the definition of $\mathcal{M}_{p}^{(t)}$.} of $u$ and $v$. Then, for $\alpha \in  [- \frac 1 d,1]$ and $t \in (0,1)$, if
    \[
        h(xy) \geq \mathcal{M}_\alpha^{(t)}( f(x), g(y))
    \]
    holds for all $x,y$, then
    \[
\int_G h \ d\mu \geq \mathcal{M}_{\alpha'}^{(t)} \left( \frac{\int_G  f \ d\mu}{(1-t)^d}, \frac{\int_G g \ d\mu}{t^d} \right).
    \]
    where $\alpha' = \frac{\alpha}{1 + \alpha d}$.
\end{thm}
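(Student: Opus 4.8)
The plan is to realize the stated inequality as an instance of the abstract sup-convolution mechanism of Theorem~\ref{thm: everything is sup convolution}, the only genuinely geometric input being the set-theoretic Brunn--Minkowski inequality for nilpotent groups. Working in exponential coordinates I would identify $G$ with its Lie algebra $\mathfrak g \cong \R^d$, under which the Haar measure $\mu$ becomes Lebesgue measure and the group law is transported by the (polynomial) Baker--Campbell--Hausdorff formula. On this space I introduce the sup-convolution adapted to the group product,
\[
    f \square g(z) \coloneqq \sup_{xy = z} \mathcal{M}_\alpha^{(t)}\big(f(x), g(y)\big),
\]
so that any admissible $h$ satisfies $h \geq f \square g$ pointwise and it suffices to bound $\int_G f \square g \, d\mu$ from below. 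The first task is to record the structural properties the abstract theorem demands of $\square$: monotonicity in each argument, the requisite measurability, and the restricted reverse triangle inequality. Each should follow from the monotonicity and superadditivity of the weighted means $\mathcal{M}_\alpha^{(t)}$ together with associativity of the group law, in parallel with the Euclidean model developed in Section~\ref{sec:general}.

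The geometric base case is the Brunn--Minkowski inequality of Tao~\cite{Tao2011} (generalizing \cite{Monti03}): for Borel sets $A, B \subseteq G$ one has $\mu(A\cdot B)^{1/d} \geq \mu(A)^{1/d} + \mu(B)^{1/d}$, with $d$ the topological dimension. I would feed this into a layer-cake reduction. If $x \in \{f \geq u\}$ and $y \in \{g \geq v\}$ then $h(xy) \geq \mathcal{M}_\alpha^{(t)}(u,v)$, giving the product inclusion $\{f \geq u\}\cdot\{g\geq v\} \subseteq \{h \geq \mathcal{M}_\alpha^{(t)}(u,v)\}$ and hence
\[
    \mu\big(\{h \geq \mathcal{M}_\alpha^{(t)}(u,v)\}\big)^{1/d} \geq \mu(\{f\geq u\})^{1/d} + \mu(\{g\geq v\})^{1/d}.
\]
Passing to the $1/d$-powers of the distribution functions collapses the problem to a one-dimensional Borell--Brascamp--Lieb statement, from which the exponent transformation $\alpha \mapsto \alpha' = \alpha/(1+\alpha d)$ emerges in the standard way. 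The normalizing factors $(1-t)^{-d}$ and $t^{-d}$ are precisely the price of Tao's inequality carrying the \emph{unweighted} sum $\mu(A)^{1/d}+\mu(B)^{1/d}$ rather than the convex combination $(1-t)\mu(A)^{1/d}+t\mu(B)^{1/d}$ available in the dilation-homogeneous Euclidean setting: a single application of the weighted arithmetic--geometric mean inequality converts the former into the latter at the cost of dividing the two measures by $(1-t)^d$ and $t^d$.

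The step I expect to be most delicate is confirming that the abstract hypotheses survive the passage to the non-commutative setting. The restricted reverse triangle inequality was formulated for Minkowski combinations on a vector space, and one must verify that its proof uses only associativity of the underlying operation, so that it persists with $(1-t)x+ty$ replaced by the group product $xy$; likewise the measurability of $f\square g$, a supremum over the fibres of the nonlinear multiplication map $G\times G\to G$, requires the Fubini/coarea argument of Section~\ref{sec:general} to be re-examined in exponential coordinates, where polynomiality of the group law should make it go through. Once these points are secured, the conclusion follows from Theorem~\ref{thm: everything is sup convolution}, with Tao's geometric inequality supplying the indicator-function case.
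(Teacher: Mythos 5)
Your proposal is correct in outline and takes essentially the same route as the paper: realize $f\square g(z)=\sup_{xy=z}\mathcal{M}_\alpha^{(t)}(f(x),g(y))$ as a generalized sup-convolution, reduce to scaled indicators, and feed in Tao's set-theoretic inequality $\mu(A\cdot B)^{1/d}\geq\mu(A)^{1/d}+\mu(B)^{1/d}$. Two remarks on where you make the argument harder than it needs to be. First, your ``layer-cake reduction to a one-dimensional Borell--Brascamp--Lieb statement'' is superfluous once Theorem~\ref{thm: abstract sup convolution theorem} is invoked: for $f=a\mathbbm{1}_A$, $g=b\mathbbm{1}_B$ the paper simply writes $\int_G f\square g\,d\mu=\mathcal{M}_\alpha^{(t)}(a,b)\,\mu(A\cdot B)$, applies Tao's inequality, uses the exact identity $\bigl(u+v\bigr)^d=\mathcal{M}_{1/d}^{(t)}\bigl(u^d/(1-t)^d,\,v^d/t^d\bigr)$ (an identity, not an instance of AM--GM as you suggest), and concludes with H\"older's inequality for means \eqref{eq:holder-mean}, which is where $\alpha'=\alpha/(1+\alpha d)$ actually comes from --- a step you only gesture at. Second, neither exponential coordinates nor a coarea argument is needed for measurability or for the restricted reverse triangle inequality: Theorem~\ref{thm: everything is sup convolution} only requires that $\Phi(x,y)=x\cdot y$ be continuous between Polish spaces and that the level sets be analytic, and its proof of superadditivity for step functions uses no algebraic structure at all, so the non-commutative setting poses no additional difficulty.
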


Let us outline the remainder of the article.  In Section \ref{sec:general} we introduce an abstract sup-convolution, and derive our main technical tool in Theorem \ref{thm: everything is sup convolution}.  In Section~\ref{sec:mesures} we derive the dimensional Brunn-Minkowski inequality for the Gaussian measure and generalizations thereof.  
In Section~\ref{sec:lp} we give a functional version of the log-Brunn-Minkowski conjecture and related results for the $L_p$-Brunn-Minkowski theory.   Finally in Section \ref{sec:Abstract} we discuss further generalizations of the development in Section~\ref{sec:general}, limitations of our methods, and give a Borell-Brascamp-Lieb type inequality for nilpotent Lie groups.

\section{Generalized Sup-Convolutions} \label{sec:general}

In this section we introduce some well-known objects from convex geometry (convex body, means) and the new notion of generalized or abstract sup-convolution (Definition \ref{def: sup convolution}) and prove two related abstract theorems that constitute the starting point of our investigations.

\medskip

A set $K \subseteq \mathbb{R}^n$ is a body, if $K$ is compact with non-empty interior.  $K$ is a convex body, if it is body that satisfies $(1-t)K + tK \subseteq K$ for $t \in [0,1]$. We call a function $f: \mathbb{R}^n \to \mathbb{R}$ unimodal if $$\{f \geq t \} = \{x \in \mathbb{R}^n: f(x) \geq t \}$$ is a convex body for $t \in \mathbb{R}$ when it is non-trivial (that is non-empty and not all of $\mathbb{R}^n$).  Many of the results to come will follow with lighter assumptions, for instance, compactness can often be replaced by Borel measurability, but we will not pursue these technicalities. 

For $u = (u_1, \dots, u_n)$ with $u_i >0$, $t = (t_1, \dots, t_n)$ with $t_i >0$ and $p \in (-\infty, 0) \cup (0,\infty)$ we define the ``mean" ($\sum_i t_i$ is not necessarily set to be one)
\begin{equation}
\label{eq:pmeans}
\mathcal{M}_p^{(t)}(u) = \left( \sum_{i=1}^n t_i u_i^p \right)^{\frac 1 p}.
\end{equation}
For $p \in \{-\infty, 0 , \infty \}$ we define
\[
    \mathcal{M}_{-\infty}^{(t)}(u) \coloneqq \min_i \{ u_i \}, 
    \qquad \qquad 
    \mathcal{M}_0^{(t)}(u) \coloneqq \prod_{i=1}^n u_i^{t_i},  
    \qquad \qquad 
    \mathcal{M}_\infty^{(t)} (u) \coloneq \max_i \{u_i\}.
\]
Note that for $p \in \{\pm \infty\}$ this is the continuous extension of $\mathcal{M}_p^{(t)}$; while under the assumption that $\sum_{i=1}^n t_i = 1$ it is the continuous extension for $p =0$ as well.
We further extend $\mathcal{M}_p^{(t)}$ to $[0,\infty)^n$ by 
\[
    \mathcal{M}_{p}^{(t)}(u) = 0
\]
for $u$ such that $\prod_{i=1}^n u_i = 0$.

For later purpose, let us observe that H\"older's inequality gives 

\begin{equation} \label{eq:holder-mean}
 (p+q) \mathcal{M}_{p}^{(t)}( u) \mathcal{M}_{q}^{(t)}(v) \geq (p+q) \mathcal{M}_{r}^{(t)}(uv),   
\end{equation}
where $(uv)_i = u_iv_i$,  for $p,q,r \in \mathbb{R}$, satisfying $\frac{1}{r}=\frac{1}{p}+\frac{1}{q}$.  Note, all of the inequalities are trivial if either $u$ or $v$ have $0$ in any coordinate, and that $\frac{1}{r} = \frac{1}{p} + \frac{1}{q}$ has no solution in $r$ when $ p+q = 0$. However, when $p+q>0$ we have $$\mathcal{M}_{p}^{(t)}(u) \mathcal{M}_{q}^{(t)}(v) \geq \mathcal{M}_{r}^{(t)}(uv)$$ and taking $q >0$ and $p \downarrow -q$, so that $r \downarrow - \infty$ we have by continuity,
\[
    \mathcal{M}_{-q}^{(t)}(u) \mathcal{M}_q^{(t)}(v) \geq \mathcal{M}_{-\infty }^{(t)}(uv).
\]
This last fact is also a consequence of the inequality $\sum_i t_i x_i \leq C \sum_i t_i y_i $, for positive numbers $x_i \leq C y_i$.

For notational convenience we may sometimes use $\mathcal{M}_{p}^{(t)}(u_i)$ instead of $\mathcal{M}_{p}^{(t)}(u)$.

\medskip

Given a collection of sets, the next definition constructs a corresponding collection of functions.

 \begin{defn}
     Given a Polish measure space $(E, \mathscr{E},\mu)$ and a collection of subsets $\mathcal{A} \subseteq \mathscr{E}$ containing both the empty set, we let $\mathcal{F}(\mathcal{A})$ denote the collection of functions
     \[
        \mathcal{F}(\mathcal{A}) \coloneqq \{ f: E \to \mathbb{R}  \ | \ \{f \geq t \} \in \mathcal{A} \hbox{ for all } t \in \mathbb{R} \}.
     \]
 \end{defn}

 Hereafter, we will assume that a collection of sets $\mathcal{A}$ contains the empty set, even if it is not explicitly mentioned.  For example, we may say, let $\mathcal{A}$ be the collection of origin symmetric convex bodies, in place of let $\mathcal{A}$ be the collection of origin symmetric convex bodies and the empty set. 
 
 For $p >0$ and $f \in \mathcal{F}(\mathcal{A})$ we denote $\|f\|_p \coloneqq \left( \int |f|^p d\mu \right)^{\frac 1 p}$ and emphasize that the underlying measure $\mu$ is understood.


Let $\{(E_1,\mathscr{E}_1, \mu_1), \dots, (E_n, \mathscr{E}_n, \mu_n)\}$ and $(F,\mathscr{F},\nu)$ be measure spaces.  We will denote $E = E_1\times \cdots \times E_n$ and $\mu = \mu_1 \otimes \cdots\otimes\mu_n$.  For $f = (f_1, \dots,f_n)$ with $f_i :E_i \to \mathbb{R}$, and $x \in E$, set
\[
    f(x) \coloneqq (f_1(x_1), \dots, f_n(x_n)) .
\]
For vectors $u, v \in \mathbb{R}^n$ we write $u \leq v$ when $u_i \leq v_i$ for all $i$.  We assume that there exists on each $E_i$ a privileged class of sets $
 \mathcal{A}_i$, and we will consider a generalized sup-convolution of $f_i \in \mathcal{F}(\mathcal{A}_i)$. 

\medskip

We are now in position to introduce the generalized sup-convolution that constitutes one of the central objects of this paper.
 
\begin{defn} \label{def: sup convolution}
    We consider a functional $\square$ that takes a collection  $f = (f_1, \dots, f_n)$ of non-negative $f_i \in \mathcal{F}(\mathcal{A}_i)$ to $\square f : F \to [0,\infty]$ to be a \emph{generalized sup-convolution} if the following are satisfied for $f_i,g_i \in \mathcal{F}(\mathcal{A}_i)$:
    \begin{enumerate}
        \item \label{item: monotonicity of sup conv} $f_i \leq g_i$ for all $i$ implies $\square f \leq \square g$ 
        \item \label{item: super additivity on sliced functions} 
        If each $g_i$ has the property that $g_i(x) > 0$ implies then $f_i(x) \geq f_i(y)$ for all $y \in E_i$, it holds that 
        $\square{(f + g)} \geq \square f  + \square g$;
        \item \label{item: sup-conv preserves step functions} if $s = (a_1\mathbbm{1}_{A_1}, \dots, a_n\mathbbm{1}_{A_n})$, for $A_i \in  \mathcal{A}_i$, then its sup-convolution $\square s$ is $\mathscr{F}$-measurable.
    \end{enumerate}
\end{defn}

Note that our notion of a sup-convolution is dependent on the measure spaces $(E_i,\mathscr{E}_i,\mu_i)$ and $(F, \mathscr{F},\nu)$, but it does not necessarily depend on an algebraic structure of said spaces.  

For brevity we will hereafter refer to a ``generalized sup-convolution'' as just a {\it sup-convolution}.  Recall that if $E_1 = E_2 = \mathbb{R}^n$ and $\mathcal{A}_1 = \mathcal{A}_2 = \mathcal{B}(\mathbb{R}^n)$ the Borel $\sigma$-algebra, and $f = (f_1, f_2) \in \mathcal{F}(\mathcal{A}_1 \times \mathcal{A}_2)$
\[
    \square f(z) \coloneqq \sup_{z = (1-t)x + ty } f_1^{1-t}(x) f_2^t(y)
\]
is a sup-convolution. 

More generally, if $E_1,E_2, \dots, E_n, F$ are Polish spaces, $t_i > 0$, $\sum_{i=1}^n t_i =1$ and $\Phi: E\to F$ continuous, then
\[
    \square f(z) \coloneqq \sup_{z = \Phi(x)} \prod_{i=1}^n f_i^{t_i }(x_i)
\]
is a sup-convolution as well. This is a very particular case of the next result.



In what follows a set is said to be analytic if it is the continuous image of a Polish space.  

\begin{thm} \label{thm: everything is sup convolution} 

Under the above conventions, given $\mathcal{A} = (\mathcal{A}_1, \dots, \mathcal{A}_n)$ with each $\mathcal{A}_i$ a sub-collection of analytic sets, for $\alpha \leq 1$ and $t= (t_1,\dots,t_n)$, with $t_i >0$ such that $\sum_{i=1}^n t_i =1$
    and $C(z) = \{x : \Phi(x) = z\}$ for a continuous function $\Phi:E =E_1 \times \cdots \times  E_n\to F$. The map defined by
    \[
        \square f(z) \coloneqq \sup_{x \in C(z)} \mathcal{M}_\alpha^{(t)}(f_i(x_i))
    \]
    is a sup-convolution.
\end{thm}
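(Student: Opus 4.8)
The plan is to verify directly that $\square$ satisfies the three axioms of Definition~\ref{def: sup convolution}, with all three reductions resting on elementary properties of the weighted power mean $\mathcal{M}_\alpha^{(t)}$. Before starting I would record the features of $u \mapsto \mathcal{M}_\alpha^{(t)}(u)$ on $[0,\infty)^n$ that drive the argument: it is nondecreasing in each coordinate, positively homogeneous of degree one, and --- crucially, and this is exactly where the hypothesis $\alpha \le 1$ enters --- concave (the classical concavity of power means of order $\le 1$; see Hardy--Littlewood--P\'olya). Concavity together with degree-one homogeneity yields the superadditivity $\mathcal{M}_\alpha^{(t)}(u+v) \ge \mathcal{M}_\alpha^{(t)}(u) + \mathcal{M}_\alpha^{(t)}(v)$ for all $u,v \in [0,\infty)^n$, where for vectors with a vanishing coordinate one reads off the bound from monotonicity together with the convention $\mathcal{M}_\alpha^{(t)} = 0$ on the boundary. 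For $\alpha > 1$ the mean is convex and superadditivity fails, which is why the statement is restricted to $\alpha \le 1$.

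Axiom~\eqref{item: monotonicity of sup conv} is immediate: if $f_i \le g_i$ for every $i$ then $f(x) \le g(x)$ coordinatewise for each $x$, so monotonicity of $\mathcal{M}_\alpha^{(t)}$ gives $\mathcal{M}_\alpha^{(t)}(f(x)) \le \mathcal{M}_\alpha^{(t)}(g(x)) \le \square g(z)$ for every $x \in C(z)$, and taking the supremum over $C(z)$ yields $\square f \le \square g$. Axiom~\eqref{item: super additivity on sliced functions} is the heart of the matter and is where the slicing hypothesis is used. Write $a = \square f(z)$ and $b = \square g(z)$, set $M_i = \sup_{E_i} f_i$ and $M = (M_1,\dots,M_n)$, and note $a \le \mathcal{M}_\alpha^{(t)}(M)$ since $f_i \le M_i$ everywhere. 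If $b = 0$ the claim follows from \eqref{item: monotonicity of sup conv} because $f + g \ge f$. If $b > 0$, pick for $\varepsilon \in (0,b)$ a point $y \in C(z)$ with $\mathcal{M}_\alpha^{(t)}(g(y)) > b - \varepsilon$; by the boundary convention this forces $g_i(y_i) > 0$ for every $i$, whence the slicing hypothesis gives $f_i(y_i) = M_i$ for all $i$, i.e. $\mathcal{M}_\alpha^{(t)}(f(y)) = \mathcal{M}_\alpha^{(t)}(M) \ge a$. Superadditivity applied to $f(y)$ and $g(y)$ then gives $\mathcal{M}_\alpha^{(t)}((f+g)(y)) \ge \mathcal{M}_\alpha^{(t)}(f(y)) + \mathcal{M}_\alpha^{(t)}(g(y)) > a + b - \varepsilon$. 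Since $y \in C(z)$, this bounds $\square(f+g)(z)$ from below by $a + b - \varepsilon$, and letting $\varepsilon \downarrow 0$ gives $\square(f+g) \ge \square f + \square g$.

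For axiom~\eqref{item: sup-conv preserves step functions} I would first collapse $\square s$ into a single indicator. With $s_i = a_i \mathbbm{1}_{A_i}$ and the boundary convention, $\mathcal{M}_\alpha^{(t)}(s(x))$ equals the constant $c \coloneqq \mathcal{M}_\alpha^{(t)}(a_1,\dots,a_n)$ when $x \in A \coloneqq A_1 \times \cdots \times A_n$ and equals $0$ otherwise; that is, $\mathcal{M}_\alpha^{(t)}(s(x)) = c\,\mathbbm{1}_A(x)$. Consequently $\square s(z) = c\,\mathbbm{1}\{C(z) \cap A \neq \varnothing\} = c\,\mathbbm{1}_{\Phi(A)}(z)$, since a point of $A$ lies in the fibre $C(z)$ precisely when $z \in \Phi(A)$. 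It remains to see that $\Phi(A)$ is measurable. Here the analyticity assumption is essential: a finite product of analytic subsets of Polish spaces is analytic, and the continuous image $\Phi(A)$ of an analytic set is again analytic; as analytic sets are universally measurable, $\mathbbm{1}_{\Phi(A)}$ is $\mathscr{F}$-measurable (in the completed/universally measurable sense used throughout), establishing \eqref{item: sup-conv preserves step functions}.

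The routine inputs are the classical concavity of $\mathcal{M}_\alpha^{(t)}$ and the bookkeeping around the boundary convention. I expect the genuine obstacle to be axiom~\eqref{item: sup-conv preserves step functions}: one must pass from the pointwise supremum defining $\square s$ to the image set $\Phi(A)$ and then guarantee its measurability, which is exactly why each $\mathcal{A}_i$ is taken inside the analytic sets --- the projection or image of a merely Borel set need not be Borel, and it is the stability of the analytic class under finite products and continuous images, together with universal measurability, that makes the measurability requirement of Definition~\ref{def: sup convolution} go through.
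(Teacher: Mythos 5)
Your proof is correct and follows essentially the same route as the paper's: monotonicity of the power mean for axiom (1), superadditivity of $\mathcal{M}_\alpha^{(t)}$ for $\alpha \le 1$ combined with the slicing hypothesis for axiom (2), and the identification $\square s = \mathcal{M}_\alpha^{(t)}(a)\,\mathbbm{1}_{\Phi(A)}$ together with stability of analytic sets under finite products and continuous images for axiom (3). The only deviation is that you verify superadditivity for arbitrary $f_i,g_i \in \mathcal{F}(\mathcal{A}_i)$ via an $\varepsilon$-approximation of the supremum, whereas the paper restricts to step functions so that the supremum is attained; your version is marginally more general and matches the letter of Definition~\ref{def: sup convolution}, though only the step-function case is used downstream.
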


In the following proof (and later on) we call a function a ``step'' function if it takes only finitely many values.

\begin{proof}[Proof of Theorem~\ref{thm: everything is sup convolution}]
    Take $f \leq g$ and $x \in C(z)$. Then $\mathcal{M}_\alpha^{(t)}(f_i(x_i)) \leq \mathcal{M}_\alpha^{(t)}(g_i(x_i)) \leq \square g(z)$.  Taking supremum over $x \in C(z)$ shows that $\square$ is monotone.

    We proceed to verifying superadditivity.  Assume that $f= (f_1,\dots,f_n)$ and $g = (g_1,\dots,g_n)$, with $f_i,g_i$ being functions such that if $g(x_i) > 0$, then $f_i(x_i) \geq f_i(y_i)$ for all $y_i \in E_i$. Now if $\square g(z) = 0$, then the result is trivial from monotonicity.  Otherwise,  $g(z) >0$, and there exists\footnote{Since $g_i$ are step functions $x \mapsto \mathcal{M}_\alpha^{(t)}(g_i(x_i))$ takes only finite many values.} an $x \in C(z)$ such that $\square g(z) = \mathcal{M}_\alpha^{(t)}(g_i(x_i))> 0$.  For such an $x$, necessarily $g_i(x_i) >0$ so that $f_i(x_i) \geq f_i(y_i)$ for all $y_i \in E_i$. It follows that
    \[
        \mathcal{M}_\alpha^{(t)}(f_i(x_i)) = \square f(z).
    \]
    Consequently,
\begin{align*}
        \square f(z) + \square g(z) &= \mathcal{M}_\alpha^{(t)}(f_i(x_i)) + \mathcal{M}_\alpha^{(t)}(g_i(x_i))\\
        &\leq \mathcal{M}_\alpha^{(t)}((f_i + g_i)(x_i))\\
        &\leq \sup_{y \in C(z)} \mathcal{M}_\alpha^{(t)}((f_i + g_i)(x_i))\\
        &= \square(f+g)(z),
\end{align*}
as required.

To establish measurability, given $s = ( a_1 \mathbbm{1}_{A_1}, \dots, a_n \mathbbm{1}_{A_n})$, we have
\[
\square s(z) = \mathcal{M}_\alpha^{(t)}(a_i) \mathbbm{1}_{\Phi(A)}(z).
\]
For each $i$, since $A_i$ is analytic, there exist Polish spaces $P_i$ and continuous functions $\psi_i$ such that $\psi_i(P_i) = A_i$.  Consider the Polish space $P = P_1 \times \cdots \times P_n$, and define $\psi: P \to F$ by
\[
\psi(p) = \Phi(\psi_1(p_1), \dots, \psi_n(p_n)).
\]
Then $\psi(P) = \Phi(A)$, so that $\Phi(A)$ is analytic, and universally measurable (see \cite[Chapter~43]{Fremlinv4}).  It follows that $\square s$ is universally measurable as well. 
 \end{proof}


 Our main tool in this paper is the following equivalence of geometric and functional inequalities.

\begin{thm} \label{thm: abstract sup convolution theorem} Under the above conventions, given $\mathcal{A} = (\mathcal{A}_1, \dots, \mathcal{A}_n)$ with each $\mathcal{A}_i$ a sub-collection of analytic sets, for $\alpha \leq 1$ and $t= (t_1,\dots,t_n)$, with $t_i >0$ such that $\sum_{i=1}^n t_i =1$, a triple $(\nu,\mu, \square)$ and $p = (p_1, \dots, p_n) \in [1,\infty)^n$  satisfies 
    \begin{equation} \label{eq: abstract sup theorem assumption}
        \nu( \square a ) \geq \mathcal{M}_\alpha^{(t)}(\|a_i\|_{p_i})
    \end{equation}
    for all functions $a$ of the form  $a  \coloneqq (a_1 \mathbbm{1}_{A_1}, \dots, a_n \mathbbm{1}_{A_n})$ for $a_i \in (0,\infty)$ and $A_i \in \mathcal{A}_i$ if and only if it satisfies for any collection of non-negative $f_i \in L_{p_i}(\mu_i) \cap \mathcal{F}(\mathcal{A})$,
    \begin{equation}\label{eq: abstract sup theorem conclusion}
        \nu_*(\square f) \geq \mathcal{M}_\alpha^{(t)}(\|f_i\|_{p_i}),
    \end{equation}
    where $\nu_*$ denotes the lower integral of $\square f$.
\end{thm}



Before we prove the theorem, we require the following lemma. 

\begin{lem} \label{lem: f- and f+ satisfy supconv condition}
    For a set $U$ and function $f : U \to \mathbb{R}$ and $t \in \mathbb{R}$, set
    \[
        f^- \coloneqq \min \{ f, t\} \ \ \ \hbox{ and } \ \ \ f^+ \coloneqq \max \{ f -t, 0 \}.
    \]
    If $f^+(x) > 0$, then $f^-(x) \geq f^-(y)$ for all $y \in U$.
\end{lem}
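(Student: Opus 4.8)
The plan is simply to unwind the two definitions and observe that the positivity of $f^+$ pins $f^-$ to its maximal possible value. First I would note that the hypothesis $f^+(x) > 0$ reads $\max\{f(x)-t,0\} > 0$, and since the second entry of the maximum is $0$, this forces $f(x) - t > 0$, i.e. $f(x) > t$. This is the only place where the hypothesis is used.

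Next I would evaluate $f^-(x)$ under this condition: since $f(x) > t$, we have $f^-(x) = \min\{f(x),t\} = t$. Separately, for an arbitrary $y \in U$ the defining property of the minimum gives $f^-(y) = \min\{f(y),t\} \leq t$, with no assumption on $f(y)$ required. Chaining these two observations yields
\[
    f^-(x) = t \geq f^-(y) \qquad \text{for all } y \in U,
\]
which is precisely the assertion.

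There is no genuine obstacle here: the statement is immediate from the definitions, and the entire content is the remark that the truncation $f^-$ is capped at the level $t$, while $f^+(x)>0$ is exactly the condition that drives $f(x)$ above that cap so that $f^-(x)$ attains it. The lemma is best viewed as a structural stepping stone — it verifies the somewhat awkward ``sliced'' hypothesis appearing in the superadditivity axiom (item~\eqref{item: super additivity on sliced functions} of Definition~\ref{def: sup convolution}) for the natural decomposition $f = f^- + f^+$, and this is where it will be applied rather than being of independent interest.
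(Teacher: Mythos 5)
Your argument is correct and is essentially identical to the paper's own proof: both deduce $f(x) > t$ from $f^+(x) > 0$, conclude $f^-(x) = t$, and compare with $f^-(y) = \min\{f(y),t\} \leq t$. No differences worth noting.
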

\begin{proof}
    For $x$ such that $f^+(x) > 0$, $f(x) > t$, so that $f^-(x) = t \geq \min\{f(y),t \} = f^-(y)$.  
\end{proof}

\begin{proof}[Proof of Theorem \ref{thm: abstract sup convolution theorem}]
Assume that $p_i=1$, for each $i=1,\ldots,n$. Let us first observe that it is enough to prove the inequality for $f_i$ step functions.  Define
$$f_i^{(m)}     \coloneqq \sum_{k=1}^{2^{2m}} 2^{-m} \mathbbm{1}_{\left\{f_i \geq \frac{k}{2^m} \right\}},$$ 
and set $f^{(m)} = (f_1^{(m)}, \dots, f_n^{(m)})$. Note that $f_i \in \mathcal{F}(\mathcal{A}_i)$ implies that $f_i^{(m)} \in \mathcal{F}(\mathcal{A}_i)$ as well.  Thus if the inequality holds for step functions, we have
    \[
        \nu_*\left( \square f \right) \geq \nu\left( \square f^{(m)} \right) \geq \mathcal{M}_\alpha^{(t)}\left(\| f^{(m)} \|_1\right),
    \]
    where the first inequality follows from the monotonicity of the sup-convolution, Definition~\ref{def: sup convolution} \eqref{item: monotonicity of sup conv}, and the fact that $f \geq f^{(m)}$.  By the monotone convergence theorem, $\|f^{(m)}\|_p$ increases to $\|f\|_p$ and by the continuity of $\mathcal{M}_\alpha^{(t)}(\cdot)$, we get
    \[
        \mathcal{M}_\alpha^{(t)} (\|f\|_1) = \lim_{m \to \infty}\mathcal{M}_\alpha^{(t)} (\|f^{(m)}\|_1) \leq \nu( \square f).
    \]
    Thus we may assume that the $f_i$ take only finitely many values and prove the result through induction on the total number of non-zero values taken by the $f_i$.  That is, if $N_i$ denotes the number of distinct non-zero values taken by $f_i$, we denote $N = \sum_{i=1}^n N_i$.  The base case follows by hypothesis. Let us assume $N > n$, that the $\mathcal{F}(\mathcal{A})$-functional inequality holds for step functions who take strictly less than $N$ total values, and that all $f_i$ are non-zero step functions.
    For each $i=1,\dots,n$, we write 
    \[
        f_i = \sum_{j =1}^{N_i} a_{ji} \mathbbm{1}_{A_{ji}},
    \]
    with $a_{ji} > 0$, $A_{ji} \in \mathcal{A}_i$ and $A_{ji} \supseteq A_{(j + 1)i } $, with $\sum_{i=1}^n N_i = N$.  Define
    \[
        f_1^- = a_{1 1} \mathbbm{1}_{A_{11}} \hbox{ and } \ f_1^+ = \sum_{j = 2}^{N_1} a_{j1} \mathbbm{1}_{A_{j1}}.
    \]
    Note that $f_1^-$ and $f_1^+$ individually take strictly fewer values than $f_1=f_1^-+f_1^+$.  Define 
    \[
        \lambda  = \frac{ \|f^-_1\|_{1}}{\|f_1\|_{1}},
    \]
    so that $1-\lambda =  \frac{ \|f^+_1\|_{1}}{\|f_1\|_{1}}$.  Note that $\lambda \in (0,1)$.
    Let
    \[
        f_i^- = \min\{ f_i, s_i\} \ \hbox{ and } \ f_i^+ = \max\{ f_i - s_i, 0 \}
    \]
    where $s_i$ is implicitly and uniquely defined through the equation
    \[
        \frac{\|f_i^-\|_{1}}{\|f_i\|_{1}} = \lambda
    \]
    because $s \mapsto \mu_i( \min\{f_i, s\} )$ is continuous and strictly increasing on $(0, \|f_i\|_\infty)$. Note that $f_i=f_i^-+f_i^+$ and that 
    $1-\lambda=\frac{\|f_i^+\|_{1}}{\|f_i\|_{1}}$, $i=1,2,\dots,n$.
    
    Set $f^+ = (f_1^+,\dots, f_n^+)$ and $f^-=(f_1^-,\dots,f_n^-)$. By Lemma \ref{lem: f- and f+ satisfy supconv condition} and Definition \ref{def: sup convolution} Item \eqref{item: super additivity on sliced functions}
    \[
        \square f \geq \square f^- + \square f^+.
    \]
    Integrating this inequality and applying the inductive hypothesis,
    \begin{align*}
        \nu( \square f) 
            &\geq
                \nu( \square f^-) + \nu(\square f^+)
                    \\
            &\geq
                \mathcal{M}_\alpha^{(t)}(\|f^-\|_{1}) + \mathcal{M}_\alpha^{(t)}(\|f^+\|_{1})
                    \\
            &=
                \mathcal{M}_\alpha^{(t)}( \lambda   \|f \|_{1}) + \mathcal{M}_\alpha^{(t)}((1-\lambda)  \|f \|_{1})
                    \\
            &=
                \mathcal{M}_\alpha^{(t)}( \|f \|_{1}),
    \end{align*}
as desired. 
Let now $p_i>1$ for all $i\in I\subset\{1,\ldots,n\}$ and $q_i$ be the dual exponent of $p_i,$ for $i\in I$. We may write
\[
\mathcal{M}_\alpha^{(t)}(\|f_i\|_{p_i})=\sup_{\|g_i\|_{q_i}=1, i\in I} \mathcal{M}_\alpha^{(t)}\left(\left(\int f_i g_i d\mu_i\right)_{i\in I}, \left(\|f_i\|_1\right)_{i\in I^c}\right).
\]
Thus (\ref{eq: abstract sup theorem conclusion}) is equivalent to the fact that for each $g_i\in L^{q_i}(\mu_i)$, with $\|g_i\|_{q_i}=1$ it holds that
\begin{equation} \label{eq: equivalent to abstract sup theorem conclusion}
\nu( \square f) \geq \mathcal{M}_\alpha^{(t)}\left(\left(\int f_i g_i d\mu_i\right)_{i\in I}, \left(\|f_i\|_1\right)_{i\in I^c}\right).
\end{equation}
For each $i\in I$, let $g_i\in L^{q_i}(\mu_i)$, with $\|g_i\|_{q_i}=1$ and consider the measures $\tilde{\mu}_i:=g_i\mu_i$, when $i\in I$. Then (\ref{eq: abstract sup theorem assumption}) implies that
\[
\nu( \square a) \geq \mathcal{M}_\alpha^{(t)}\left(\left(\|a_i\|_{L^1(\tilde{\mu}_i)}\right)_{i\in I}, \left(\|a_i\|_1\right)_{i\in I^c}\right),
\]
for each $a  \coloneqq (a_1 \mathbbm{1}_{A_1}, \dots, a_n \mathbbm{1}_{A_n})$ for $a_i \in (0,\infty)$ and $A_i \in \mathcal{A}_i$. Since for each $i\in I$, $f_i\in L^{1}(\tilde{\mu}_i)$ and for $i\in I^c$, $f_i\in L^1(\mu_i)$, the previous case where all $p_i$'s were equal to $1$ gives (\ref{eq: equivalent to abstract sup theorem conclusion}) and thus concludes the proof for $p\in [1,\infty)^n$.

\end{proof}

Concerning operations in line with our concentrations are the articles \cite{GHW13,GK18}. In \cite{GHW13} Gardner, Hug, and Weil conducted a  systemic study of operations between sets in $\R^n$ with a particular emphasis on compact convex sets in $\R^n$. Among these operations are the Minkowski sum, and $L_p$-Minkowski sum (see Section~\ref{sec:lp}), which we consider here. Additionally, the article \cite{GK18} of Gardner and Kiderlin concerns what they called ``point-wise, homogeneous, monotonic, and associative'' between functions defined on $\R^n$, and they characterize such operations as representable by $40$ explicitly given types.  In particular, \cite[Section~11]{GK18} concerns a characterization of the supremal convolution appearing in \eqref{eq:operationexample} on the class of logarithmically concave functions on $\R^n$, denoted $\text{LC}(\R^n)$. They show, \cite[Theorem~11.1]{GK18}, that any operation $\star:\text{LC}(\R^n) \times \text{LC}(\R^n) \to \text{LC}(\R^n)$ verifying certain properties, in addition to the above ones, must arise as a sup-convolution.  Though most of our focus in this work is in Euclidean space, the framework for our ``generalized sup-convolution'' is much more flexible, approaching inequalities in measure spaces, both geometric and functional.  An analogous classification of the seems beyond reach currently.


\section{Dimensional Functional Brunn-Minkowksi inequalities in measure spaces} \label{sec:mesures}




In this section we use Theorem \ref{thm: abstract sup convolution theorem} to obtain functional versions of dimensional Brunn-Minkowski inequalities. We will start with the proof of Theorem \ref{thm: rotationally invariant BBL for unimodal}, for which the main result of \cite{cordero2023improved} is required.

\begin{alphatheorem}[Cordero-Erausquin \& Rotem, \cite{cordero2023improved}] \label{thm: Cordero Rotem}
    Let $w:[0,\infty)\to (-\infty,\infty]$ be an increasing function such that $\R^n\ni s \mapsto w(e^s)$ is convex, and $\mu$ the measure on $\mathbb{R}^n$ with density $\frac{d\mu}{dx} = e^{-w(|x|)}$. Then for symmetric convex bodies $A$ and $B$
    \[
        \mu((1-t)A + tB ) \geq \mathcal{M}_{\frac 1 n}^{(t)}(\mu(A), \mu(B)). 
    \]
\end{alphatheorem}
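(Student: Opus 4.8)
The plan is to prove Theorem~\ref{thm: Cordero Rotem} by the local (infinitesimal) $L^2$ method for weighted Brunn--Minkowski inequalities, in the spirit of Kolesnikov--Milman and Eskenazis--Moschidis, with the rotational invariance of $\mu$ being exactly what renders the relevant spectral estimate tractable. Writing $V(x)=w(|x|)$ so that $d\mu = e^{-V}\,dx$, the asserted bound $\mu((1-t)A+tB)\geq \mathcal{M}_{\frac{1}{n}}^{(t)}(\mu(A),\mu(B))$ is precisely the concavity of $t\mapsto \mu((1-t)A+tB)^{\frac{1}{n}}$. First I would reduce, by approximation, to smooth, strictly convex, origin-symmetric $A$ and $B$, and then pass to the infinitesimal form: by the standard equivalence between the global dimensional Brunn--Minkowski inequality and its local counterpart, it suffices to show that for a single smooth symmetric body $K$ and every admissible normal perturbation $\varphi$ of $\partial K$, the second variation of $s\mapsto \mu(K_s)^{\frac{1}{n}}$ is nonpositive.

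Computing the first and second variations (via $\tfrac{d}{ds}\mu(K_s)=\int_{\partial K}\varphi\,e^{-V}\,d\mathcal{H}^{n-1}$) recasts this as a weighted Poincar\'e--type (trace) inequality on $\partial K$ of the schematic form, for every \emph{even} test function $\varphi$,
\[
\int_{\partial K}\Big[\langle \mathrm{II}^{-1}\nabla_{\partial K}\varphi,\nabla_{\partial K}\varphi\rangle + \langle \nabla^2 V\,\nu,\nu\rangle\,\varphi^2\Big]\,e^{-V}\,d\mathcal{H}^{n-1}\;\geq\;\frac{1}{n}\,\frac{\big(\int_{\partial K}\varphi\,e^{-V}\,d\mathcal{H}^{n-1}\big)^2}{\mu(K)},
\]
where $\mathrm{II}$ is the second fundamental form and $\nu$ the outer normal; the rank-one right-hand side with the sharp constant $\tfrac{1}{n}$ is what encodes the dimensional (as opposed to dimension-free) nature of the estimate. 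For a radial potential one has the explicit diagonalization
\[
\nabla^2 V = w''(r)\,\hat x\otimes\hat x + \tfrac{w'(r)}{r}\big(I-\hat x\otimes\hat x\big),\qquad r=|x|,\ \hat x = x/r,
\]
and the hypothesis that $s\mapsto w(e^s)$ be convex is exactly $r\,w''(r)+w'(r)\geq 0$, a lower bound coupling the radial eigenvalue $w''$ to the tangential eigenvalue $w'/r$. Together with $w'\geq 0$ this furnishes precisely the positivity of $\nabla^2 V$, in the mixed radial--tangential sense, needed to dominate the $\tfrac{1}{n}$ correction, while origin-symmetry is used to restrict to even $\varphi$ and thereby discard the first-degree spherical harmonics, which are exactly the modes obstructing the inequality for general bodies.

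The main obstacle is establishing this boundary Poincar\'e inequality with the sharp dimensional constant: one must diagonalize the quadratic form using the rotational symmetry, separate radial from spherical contributions, and check that eliminating the linear modes (legitimate only because $\varphi$ is even) leaves a spectral gap large enough to beat $\tfrac{1}{n}$ --- this is where the convexity of $w(e^s)$ is consumed. Once Theorem~\ref{thm: Cordero Rotem} is in hand it plays, in the present framework, the role of a geometric base case: the associated functional Borell--Brascamp--Lieb inequality for $\mu$ would then follow verbatim from the argument for Theorem~\ref{thm: Gaussian Borell Brascamp Lieb}, invoking Theorem~\ref{thm: everything is sup convolution} and Theorem~\ref{thm: abstract sup convolution theorem} with the scaled indicators $a\mathbbm{1}_A,\,b\mathbbm{1}_B$ and substituting Theorem~\ref{thm: Cordero Rotem} for the Gaussian input.
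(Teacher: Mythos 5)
This statement is not proved in the paper at all: it is an imported result, quoted verbatim from Cordero-Erausquin and Rotem \cite{cordero2023improved} and used later only as a black box (the paper's contribution is the functional upgrade via Theorem \ref{thm: everything is sup convolution} and Theorem \ref{thm: abstract sup convolution theorem}, not the geometric inequality itself). So there is no internal proof to compare against; your proposal has to be judged as a reconstruction of the original argument.

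As such a reconstruction, your outline correctly identifies the framework of the actual proof (the local $L^2$/second-variation method of Kolesnikov--Milman, a weighted trace-Poincar\'e inequality on $\partial K$ with the rank-one right-hand side carrying the constant $\tfrac1n$, diagonalization of $\nabla^2 V$ for a radial potential, and the use of origin-symmetry to discard the first spherical harmonics), and your identification of the hypothesis ``$s\mapsto w(e^s)$ convex'' with $r\,w''(r)+w'(r)\geq 0$ is right. But the proposal has a genuine gap: the entire analytic content of the theorem is concentrated in the boundary spectral estimate, and you explicitly defer it (``the main obstacle is establishing this boundary Poincar\'e inequality with the sharp dimensional constant''). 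Nothing in the write-up actually carries out the separation into spherical harmonics or verifies that the surviving modes satisfy the required gap, so what you have is a plan, not a proof. Two further points need care if you were to complete it. First, the claim that $w'\geq 0$ together with $rw''+w'\geq 0$ ``furnishes the positivity of $\nabla^2 V$'' is not correct as stated: the radial eigenvalue $w''(r)$ may well be negative (the hypothesis only bounds it below by $-w'(r)/r$), so $\nabla^2 V$ need not be positive semidefinite, and the argument must exploit the coupling between the radial and tangential eigenvalues inside the spectral decomposition rather than pointwise positivity of the Hessian. Second, the ``standard equivalence'' between the local and global forms of a \emph{dimensional} weighted Brunn--Minkowski inequality over the class of origin-symmetric convex bodies is itself a nontrivial theorem (closure of the class under Minkowski interpolation plus a continuity/connectivity argument \`a la Kolesnikov--Milman), and should be cited or justified rather than treated as routine.
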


\begin{proof}[Proof of Theorem \ref{thm: rotationally invariant BBL for unimodal}]
    We prove the result first when $\alpha= - \frac 1 n$.  Take $\mathcal{A} = (\mathcal{A}_1, \mathcal{A}_2)$ for $\mathcal{A}_i$ the cone of all symmetric convex bodies, and $\Phi: \mathbb{R}^n \times \mathbb{R}^n \to \mathbb{R}^n$ to be $\Phi(x,y) = (1-t)x + ty$ and $C(z) = \{(x,y): \Phi(x,y) = z\}$. According to Theorem \ref{thm: everything is sup convolution},
    \[
        f\square g(z) \coloneqq  \sup_{(1-t)x +ty = z} \mathcal{M}^{(t)}_{-\frac 1 n}(f(x),g(y))
    \]
    is a sup-convolution. Thus, in view of Theorem~\ref{thm: abstract sup convolution theorem},  our result holds if it holds for scaled indicator functions. Suppose that $f = a \mathbbm{1}_A$ and $g = b \mathbbm{1}_B$ for $A$ and $B$ symmetric convex bodies. For the remainder of the proof, we will always assume that $f$ and $g$ are of the above form. Observe that
    \begin{align*}
        \mu(f \square g) 
            &=
                \mathcal{M}_{- \frac 1 n}^{(t)}(a,b) \mu((1-t)A + tB)
                    \\
            &\geq
                \mathcal{M}_{- \frac 1 n}^{(t)}(a,b) \mathcal{M}_{\frac 1 n}^{(t)}(\mu(A),\mu(B))
                    \\
            &\geq \min\{ a \mu(A), b \mu(B) \},
    \end{align*}
    completing the proof for $\alpha = - \frac 1 n$. 
    
    Assume next that $\alpha > - \frac 1n$. Then, by applying Holder's inequality \eqref{eq:holder-mean}, we obtain
\begin{align*}
       h((1-t)x + ty) &\geq \mathcal{M}_\alpha^{(t)}(f(x),g(y))\\
       &= \mathcal{M}_\alpha^{(t)} \left( \frac{f(x)}{\mu(f)} \mu(f), \frac{g(y)}{\mu(g)} \mu(g) \right)\\
       &\geq \mathcal{M}_{- \frac 1 n}^{(t)} \left( \frac{f(x)}{\mu(f)}, \frac{g(y)}{\mu(g)} \right) \mathcal{M}_r^{(t)}( \mu(f), \mu(g)).
\end{align*}
    Applying the $\alpha = - \frac 1 n$ result to this inequality completes the proof of the general case. For the converse we apply the functional inequality to the characteristic functions of two symmetric convex bodies.
\end{proof}

In \cite{CE2025} a different functional formulation of the dimensional Brunn--Minkowski inequality was given and reads as follows (with the notation used so far):

\begin{alphatheorem}[Cordero-Erausquin \& Eskenazis, \cite{CE2025}]
\label{thm:dimensionalconcavitygeneral}
    Let $\mu$ be the rotationally invariant measure on $\R^n$ as previously. Let $\Omega\subset\R^{n+1}$ be convex, with the property that for each $t\in \R$, $\Omega_t:=\{x\in \R^n: (t,x)\in \Omega\}$ is symmetric, and $\Phi:\Omega\to [0,\infty)$ be concave, with $\Phi(t,\cdot)$ even for each $t\in \R$. Then 
    \begin{equation}
        \phi(t):= \left( \int_{\Omega_t} \Phi(t,x)^{\frac{1}{\alpha}}\ d\mu(x)   \right)^{\frac{\alpha}{1+n\alpha}},
    \end{equation}
    is concave on its support for each $\alpha >0$, whenever the integral converges.
\end{alphatheorem}

Further, in \cite{CE2025} it was asked whether the function $\phi$ is \textit{convex}, whenever $\alpha \in (-\frac{1}{n},0)$ and $\Phi$ is convex. Using Theorem \ref{thm: rotationally invariant BBL for unimodal} we can answer positively this question for the class of measures considered so far and offer another proof of \cite[Theorem 1]{CE2025}.

\begin{cor} \label{cor: general cordero--eskenazis}
    Let $\mu$ be the measure on $\mathbb{R}^n$, with density $e^{-w(|x|)}$, where $w:[0,\infty)\to (-\infty,\infty]$ is increasing and $\mathbb{R}\ni t \mapsto w(e^t)$ is convex. Let $\Omega\subset\R^{n+1}$ be convex, with the property that for each $t\in \R$, $\Omega_t:=\{x\in \R^n: (t,x)\in \Omega\}$ is symmetric, and $\Phi:\Omega\to [0,\infty)$ a function with $\Phi(t,\cdot)$ even for each $t\in \R$. Consider the function
    \begin{equation}
         \phi(t):= \left( \int_{\Omega_t} \Phi(t,x)^{\frac{1}{\alpha}}\ d\mu(x)   \right)^{\frac{\alpha}{1+n\alpha}}.
    \end{equation} 
    \begin{enumerate}
        \item If $\alpha>0$ and $\Phi$ is concave, then $\phi$ is concave on its support, whenever the integral converges.

        \item If $\alpha \in (-\frac{1}{n},0)$ and $\Phi$ is convex, then $\phi$ is convex on its support, whenever the integral converges.
        
    \end{enumerate}
\end{cor}

\begin{proof}
Consider the functions:
\[
h(z):=\Phi(\lambda t+(1-\lambda)s,z)^{\frac{1}{\alpha}}\mathbbm{1}_{\Omega_{\lambda t+(1-\lambda)s}}(z),\ f(x):=\Phi( t,x)^{\frac{1}{\alpha}}\mathbbm{1}_{\Omega_{t}}(x) \text{  and  } g(y):= \Phi( s,y)^{\frac{1}{\alpha}}\mathbbm{1}_{\Omega_{s}}(y).
\]
Then we have that 
\[
h(\lambda x+(1-\lambda)y)\geq \mathcal{M}_\alpha^{(\lambda)}(f(x),g(y))
\]
and thus Theorem~\ref{thm: rotationally invariant BBL for unimodal} gives that $\phi$ has the desired property in both cases.
\end{proof}

Aishwarya and Rotem \cite{aishwarya2023new} recently used a new approach for dimensional Brunn-Minkowski inequalities using tools from optimal transport and information theory. Their entropic method produced the following geometric result.

\begin{alphatheorem}[Aishwarya \& Rotem, \cite{aishwarya2023new}]\label{thm: Aishwarya Rotem}
    Let $V: \mathbb{R}^n \to [0,\infty)$ a $p$-homogeneous convex function for $1 < p < \infty$, and let $\nu$ be a measure with density $\frac{d\nu}{dx}$ proportional to $e^{-V}$.  Let $K_0$ and $K_1 \subseteq \mathbb{R}^n$ be star bodies.  Then for $0 < t < 1$,
    \[
        \nu((1-t) K_0 + t K_1) \geq \mathcal{M}^{(t)}_{\frac{p-1}{pn}}(\nu(K_0),\nu(K_1))
    \]
\end{alphatheorem}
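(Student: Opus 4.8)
\emph{Strategy.} The plan is to use the $p$-homogeneity of $V$ to write $\nu$ as a continuous superposition of Lebesgue volumes of dilated convex slices, then to combine the classical Brunn--Minkowski inequality on each slice with a one-dimensional weighted integral inequality that is the true source of the dimensional exponent $s_0 := \frac{p-1}{pn}$. Write $Z = \int_{\R^n} e^{-V}\,dx$ and $L = \{V \le 1\}$; since $V$ is convex, nonnegative, and (as integrability of $e^{-V}$ forces) coercive with $V(0)=0$, the set $L$ is a convex body, and $p$-homogeneity gives $\{V \le s\} = s^{1/p}L$ for all $s>0$. Using the layer-cake identity $e^{-V(x)} = \int_{V(x)}^\infty e^{-s}\,ds$ and Fubini, for any Borel $K$ one obtains
\[
    Z\,\nu(K) = \int_0^\infty e^{-s}\,\bigl|K \cap s^{1/p}L\bigr|\,ds .
\]
Thus everything reduces to controlling the profile $\phi_K(s) := |K \cap s^{1/p}L|$.

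\emph{Slice inequality.} First I would establish the Brunn--Minkowski inequality pointwise in $s$. Writing $K_t = (1-t)K_0 + tK_1$, convexity of $L$ gives $(1-t)s^{1/p}L + t\,s^{1/p}L = s^{1/p}L$, whence
\[
    (1-t)\bigl(K_0 \cap s^{1/p}L\bigr) + t\bigl(K_1 \cap s^{1/p}L\bigr) \subseteq K_t \cap s^{1/p}L .
\]
Because the Brunn--Minkowski inequality holds for arbitrary measurable sets, and the slices $K_i \cap s^{1/p}L$ are merely measurable (the $K_i$ being star bodies, not necessarily convex, which is exactly the point where star bodies are admissible), this yields $\phi_{K_t}(s)^{1/n} \ge (1-t)\phi_{K_0}(s)^{1/n} + t\,\phi_{K_1}(s)^{1/n}$ for every $s>0$. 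Each profile is nondecreasing in $s$, is bounded by $|K_i|$, and equals $s^{n/p}|L|$ while $s^{1/p}L \subseteq K_i$.

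\emph{The one-dimensional reduction.} It then remains to prove: if $\phi_0,\phi_1\colon(0,\infty)\to[0,\infty)$ are nondecreasing and $\phi^{1/n} \ge (1-t)\phi_0^{1/n} + t\phi_1^{1/n}$, then
\[
    \Bigl(\int_0^\infty e^{-s}\phi\,ds\Bigr)^{s_0} \ge (1-t)\Bigl(\int_0^\infty e^{-s}\phi_0\,ds\Bigr)^{s_0} + t\Bigl(\int_0^\infty e^{-s}\phi_1\,ds\Bigr)^{s_0}.
\]
I would first sanity-check the exponent on dilates $K_1=\lambda K_0$: there $\nu(aK_0)=a^n g(a)/Z$ with $g(a)=\int_{K_0}e^{-a^pV}\,dx$, so $\nu(aK_0)\sim a^n|K_0|/Z$ as $a\to0$ and $a\mapsto\nu(aK_0)^{s_0}$ has the scale-invariant boundary behaviour $a^{ns_0}=a^{(p-1)/p}$, a concave power; this matches the claimed exponent and shows it cannot be taken above $1/n$.

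\emph{Main obstacle.} The crux is this last inequality, which is genuinely delicate. Setting $N(\psi) := (\int_0^\infty e^{-s}\psi^n\,ds)^{1/n}$, the left side is $N(\phi^{1/n})^{\,(p-1)/p}$ and, by monotonicity of the integral, we may replace $\phi^{1/n}$ by $\psi_t := (1-t)\phi_0^{1/n}+t\phi_1^{1/n}$. But $N$ is a norm on $L^n(e^{-s}ds)$, hence \emph{convex} along the segment $\psi_t$, i.e.\ $N(\psi_t)\le(1-t)N(\phi_0^{1/n})+tN(\phi_1^{1/n})$ --- the opposite of what is needed. Thus neither Minkowski nor a direct Borell--Brascamp--Lieb argument with the diagonal transport $s\mapsto(s,s)$ closes the proof (the Borell--Brascamp--Lieb hypothesis fails off the diagonal). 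Concavity of $x\mapsto x^{(p-1)/p}$ would suffice \emph{if} $N$ were additive along the segment, so the whole game is to recover enough additivity of $N$ from the two ingredients left unused: the monotonicity of the profiles $\phi_i$ and the log-concavity of the exponential weight. I would attack this by rewriting $\int_0^\infty e^{-s}\phi_i\,ds$ as an expectation against the fixed law $e^{-s}\,ds$ and transporting along the level sets of the nondecreasing maps $\phi_i$, equivalently through a one-dimensional Borell--Brascamp--Lieb inequality applied with a non-diagonal cost adapted to the weight. Verifying that this transport interacts with $e^{-s}$ so as to produce precisely the exponent $s_0=\frac{p-1}{pn}$ is the heart of the matter.
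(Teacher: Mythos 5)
First, a point of comparison: the paper does not prove this statement at all — it is Theorem D, imported verbatim from Aishwarya and Rotem \cite{aishwarya2023new} and used as a black box to feed Theorems \ref{thm: everything is sup convolution} and \ref{thm: abstract sup convolution theorem}. So there is no in-paper proof to measure your argument against, and your attempt has to stand on its own. It does not: you yourself flag that the ``one-dimensional reduction'' is unproven, and that is not a technical loose end but the entire content of the theorem. The layer-cake identity $Z\,\nu(K)=\int_0^\infty e^{-s}\,|K\cap s^{1/p}L|\,ds$ and the per-slice Brunn--Minkowski inequality are correct but essentially free; everything specific to the exponent $\frac{p-1}{pn}$ is deferred to the final integral inequality, which you then show you cannot close by Minkowski-type or diagonal Borell--Brascamp--Lieb arguments.

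Worse, the one-dimensional lemma as you state it — $\phi_i$ nondecreasing and $\phi^{1/n}\ge(1-t)\phi_0^{1/n}+t\phi_1^{1/n}$ imply $\bigl(\int e^{-s}\phi\bigr)^{s_0}\ge(1-t)\bigl(\int e^{-s}\phi_0\bigr)^{s_0}+t\bigl(\int e^{-s}\phi_1\bigr)^{s_0}$ with $s_0=\frac{p-1}{pn}$ — is false. A structural warning sign is that $p$ appears in the conclusion but nowhere in the hypotheses. Concretely, take $n=2$, $t=\tfrac12$, $p=50$ (so $s_0=0.49$), $\phi_0\equiv 1$, $\phi_1=\mathbbm{1}_{[c,\infty)}$ with $e^{-c}=0.9$, and $\phi=\bigl(\tfrac12+\tfrac12\mathbbm{1}_{[c,\infty)}\bigr)^2$, the minimal admissible choice. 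Then $\int e^{-s}\phi\,ds=\tfrac14+\tfrac34e^{-c}=0.925$ and $0.925^{0.49}\approx 0.9625$, while $\tfrac12+\tfrac12(0.9)^{0.49}\approx 0.9749$, so the claimed inequality fails. These $\phi_i$ are of course not realizable as profiles $|K\cap s^{1/p}L|$ — genuine profiles vanish at $s=0$ and, writing them via radial functions as $\phi_K(s)=\frac1n\int_{S^{n-1}}\min\bigl(\rho_K(\theta)^n,\,s^{n/p}\rho_L(\theta)^n\bigr)\,d\theta$, they are concave nondecreasing functions of $s^{n/p}$. That is exactly the information your reduction discards, and it is the only place the homogeneity degree $p$ can enter to produce $\frac{p-1}{pn}$. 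So the missing step is not merely unproven; the reduction as formulated throws away the hypotheses needed to make it true, and a correct proof must run along genuinely different lines.
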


Here $K$ is a star body if $K$ is Borel and satisfies $\lambda K \subseteq K$ or $\lambda \in [0,1]$ and $V$ a $p$-homogeneous function means that $V(\lambda x) = \lambda^p V(x)$ for $\lambda \geq  0$ and $x \in \mathbb{R}^n$.  We consider a function $f: \mathbb{R}^n \to \mathbb{R}$ to be {\it star-unimodal}, if $t \in \mathbb{R}$ implies $\{f \geq t\}$ is a star body. 

An application of Theorem \ref{thm: abstract sup convolution theorem} for this class of sets, similar to the above proof, gives the following functional analogue.

\begin{thm} Let $V \colon \R^n \to [0,\infty)$ be $s$-homogeneous convex function for some $1< p < \infty$, and let $\nu$ be the measure with density $\frac{d\nu}{dx}=e^{-V}$, and let $t \in (0,1)$ and $\alpha \geq -\frac{p-1}{pn}$. If  $h,f,g: \mathbb{R}^n \to [0,\infty)$ with $f,g$ star-unimodal, is a triple of functions satisfying
    \[
        h((1-t)x + ty) \geq \mathcal{M}_\alpha^{(t)}(f(x),g(y)),
    \]
    then
    \[
        \nu(h) \geq \mathcal{M}_r^{(t)} (\nu(f), \nu(g)),
    \]
    where $r = \frac{\alpha(p-1)}{\alpha p n + p-1}$.
\end{thm}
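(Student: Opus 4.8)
The plan is to follow the template established in the proof of Theorem~\ref{thm: Gaussian Borell Brascamp Lieb}, substituting the Aishwarya--Rotem inequality (Theorem~\ref{thm: Aishwarya Rotem}) for the Gaussian Brunn--Minkowski inequality. Write $q \coloneqq \frac{s-1}{sn} = \frac{p-1}{pn}$ for the exponent appearing in Theorem~\ref{thm: Aishwarya Rotem}, and observe that the boundary value of the hypothesis, $\alpha = \frac{1-s}{sn}$, is exactly $\alpha = -q$; this will serve as the base case, and it corresponds to $r = -\infty$, since substituting $\alpha = -q$ into $r = \frac{\alpha(s-1)}{\alpha sn + s - 1}$ sends the denominator to $0$ with negative numerator.

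First I would treat the base case $\alpha = -q$. Take $\mathcal{A}_1 = \mathcal{A}_2$ to be the class of star bodies in $\R^n$; since star bodies are Borel and hence analytic, Theorem~\ref{thm: everything is sup convolution} applies with $\Phi(x,y) = (1-t)x + ty$ and $C(z) = \Phi^{-1}(z)$, so that
\[
    f \square g(z) \coloneqq \sup_{(1-t)x + ty = z} \mathcal{M}_{-q}^{(t)}(f(x), g(y))
\]
is a sup-convolution. By Theorem~\ref{thm: abstract sup convolution theorem} it then suffices to verify the target inequality for scaled indicators $f = a\mathbbm{1}_A$, $g = b\mathbbm{1}_B$ with $A, B$ star bodies. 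For these,
\[
    \nu(f \square g) = \mathcal{M}_{-q}^{(t)}(a,b)\,\nu\big((1-t)A + tB\big) \geq \mathcal{M}_{-q}^{(t)}(a,b)\,\mathcal{M}_{q}^{(t)}(\nu(A),\nu(B)) \geq \min\{a\nu(A), b\nu(B)\},
\]
where the first inequality is Theorem~\ref{thm: Aishwarya Rotem} and the second is the product-of-means bound $\mathcal{M}_{-q}^{(t)}(u)\mathcal{M}_{q}^{(t)}(v) \geq \mathcal{M}_{-\infty}^{(t)}(uv)$ recorded after \eqref{eq:holder-mean}. Since $h \geq f \square g$ by hypothesis, this yields $\nu(h) \geq \min\{\nu(f),\nu(g)\} = \mathcal{M}_{-\infty}^{(t)}(\nu(f),\nu(g))$, which is the asserted inequality at $\alpha = -q$.

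For the general case $\alpha > -q$ I would reduce to the base case by Hölder's inequality \eqref{eq:holder-mean}. Factoring $\nu(f)$ and $\nu(g)$ out of the arguments and setting $\frac1r = \frac1\alpha + \frac1q$ (which solves to $r = \frac{\alpha(s-1)}{\alpha sn + s-1}$), one obtains
\[
    \mathcal{M}_\alpha^{(t)}(f(x),g(y)) \geq \mathcal{M}_{-q}^{(t)}\!\left(\tfrac{f(x)}{\nu(f)}, \tfrac{g(y)}{\nu(g)}\right)\mathcal{M}_{r}^{(t)}(\nu(f),\nu(g)).
\]
The key point is that for $\alpha > -q$ the two factor exponents $-q$ and $r$ satisfy $-q + r = \frac{-q^2}{\alpha + q} < 0$, so \eqref{eq:holder-mean} delivers the \emph{reverse} of the usual Hölder direction, exactly as in the Gaussian proof. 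Combining this with $h((1-t)x+ty) \geq \mathcal{M}_\alpha^{(t)}(f(x),g(y))$ and applying the base-case result to the normalized functions $f/\nu(f)$ and $g/\nu(g)$ completes the argument.

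The main obstacle is bookkeeping rather than conceptual. One must confirm that the class of star bodies slots into the hypotheses of Theorems~\ref{thm: everything is sup convolution} and~\ref{thm: abstract sup convolution theorem}: analyticity is automatic, the superlevel sets of star-unimodal functions are star bodies, and $\Phi$ carries products of star bodies onto their Minkowski combinations. One must also track the exponent arithmetic together with the sign of $-q + r$ to secure the correct direction in \eqref{eq:holder-mean}. A minor point worth flagging is that the base case uses the sup-convolution built from $\mathcal{M}_{-q}^{(t)}$ while the right-hand side is $\mathcal{M}_{-\infty}^{(t)}$; this causes no difficulty, because the inductive argument proving Theorem~\ref{thm: abstract sup convolution theorem} relies only on the $1$-homogeneity of the right-hand mean, a property that $\mathcal{M}_{-\infty}^{(t)}$ shares.
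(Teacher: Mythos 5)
Your proposal is correct and follows exactly the route the paper takes: its proof of this theorem is the single sentence that the argument of Theorem~\ref{thm: Gaussian Borell Brascamp Lieb} applies verbatim with $\mathcal{A}_i$ the class of star bodies and Theorem~\ref{thm: Aishwarya Rotem} in place of the Gaussian inequality, which is precisely the base case at $\alpha=-q$ plus the H\"older reduction you spell out. Your exponent bookkeeping ($\tfrac1r=\tfrac1\alpha+\tfrac1q$ and the sign of $-q+r$) and your remark about $\mathcal{M}^{(t)}_{-\infty}$ on the right-hand side are both consistent with what the paper does implicitly in the Gaussian case.
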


\begin{proof}
The methods of Theorem \ref{thm: rotationally invariant BBL for unimodal} can be applied to $\mathcal{A}_i$ the collection of star bodies and applying Theorem~\ref{thm: Aishwarya Rotem}.
\end{proof}

When restricted to radially decreasing functions and $\alpha>0$, this becomes \cite[Theorem~1.8]{AD25}.

\section{Functional $L_p$-Brunn-Minkowski inequalities} \label{sec:lp}

In this section we investigate how our methods can be used to provide functional inequalities in the $L_p$-Brunn-Minkowski theory. In particular, we develop a functional equivalent of the celebrated $\log$-Brunn-Minkowski conjecture from \cite{BLYZ12}. 

This theory has been pivotal in many advancements in convex geometry and affine differential geometry over the last thirty years. The literature is extensive, so we direct the reader to the articles \cite{Firey62,LE93,LE96,LZ97,LYZ00, LYZ02, LYZ04, LYZ04_2, LYZ05,LYZ05_2, LYZ06}, which demonstrate the versatility of this subject and its connection to other fields.  All of the above concern the case of $p \geq 1$. 

On the other hand, very recently the study of the case $p \in [0,1]$ has gained much momentum; see the survey \cite{Bor23} of B\"or\"oczky for a detailed history, and related results and conjectures, and their connections to other fields. In particular, we would like to direct the reader's attention to the works \cite{BLYZ12,Marsiglietti,LNMZ,CS15,CS16, CLM17,KM22,CHLL20}.

We remind the reader that we take a convex body to be a compact convex set with non-empty interior.
For convex bodies containing the origin in their interior, $K_0$ and $K_1$, $t \in (0,1)$ and $p \in [0,\infty)$ one defines the $L_p$-Minkowski convex combination of $K_0$ with $K_1$, denoted $(1-t) \cdot K_0 +_p t \cdot K_1$,
via the W\"ulff shape of $p$-th average of the support functions of the $K_i$.  More explicitly, for a set $A \subseteq \mathbb{R}^n$ define (the support function)
$
    h_A: \mathbb{R}^n \to \mathbb{R}
$ by
\begin{equation} \label{eq:support}
    h_A(z) \coloneqq \sup_{x \in A} \langle x, z \rangle. 
\end{equation}
Let $(1-t) \cdot K_0 +_p t \cdot K_1$ be defined as
\begin{equation}\label{eq:lpsum}
(1-t) \cdot K_0 +_p t \cdot K_1:=    \bigcap_{\theta \in \mathbb{S}^{n-1} }\left \{x \in \mathbb{R}^n : \langle x, \theta \rangle \leq \mathcal{M}_p^{(t)}\left(h_{K_0}(\theta), h_{K_1}(\theta) \right) \right\}.
\end{equation}

Note that when $p=1$ the $L_p$-Minkowski sum amounts to the classical Minkowski addition 
$(1-t)K_0+tK_1$.

For $p=0$ we shorten the notation and write 
$$
K_0^{1-t}K_1^t \coloneqq (1-t) \cdot K_0 +_0 t \cdot K_1.
$$

In the case $p > 1$, and $q = \frac{p}{p-1}$, this definition extends to Borel sets in \cite{LYZ12} in the following way: given Borel measurable sets $A,B \subset \R^n$ and $t \in (0,1)$, define
\begin{align*}
(1-t) A +_p t B &= \bigcup_{\lambda \in [0,1]} \left\{ (1-t)^{\frac{1}{p}}(1-\lambda)^{\frac{1}{q}} A + t^{\frac{1}{p}}\lambda^{\frac{1}{q}}B\right\}\\
&=\left\{z : \exists (a,b,\lambda) \in A \times B \times [0,1] \text{ with }\ z = (1-t)^{\frac 1 p} (1-\lambda)^{\frac 1 q} a + t^{\frac 1 p} \lambda^{\frac 1 q} b \right\}.
\end{align*}
As proved in \cite[Lemma 1-2]{LYZ12} this interpretation of the $L_p$-Minkowski sum  agrees with \eqref{eq:lpsum} when $p > 1$ 
 and when $A,B$ are convex bodies containing the origin in their interiors. Therein, the authors established the following general form of the $L_p$-Brunn-Minkowski inequality for Borel sets for $p >1$:
\begin{equation}\label{e:LpBMBorel}
|(1-t)A+_p tB| \geq \mathcal{M}_{\frac p n}^{(t)} (|A|,|B|). 
\end{equation}
Since $(1-t) A + t B \subseteq (1-t) A  +_p t B$ for $p >1$ (a non-trivial inclusion proved in \cite[Lemma 3]{LYZ12}), inequality \eqref{e:LpBMBorel} follows from the classical Brunn-Minkowski inequality \eqref{eq: BMI} and a homogeneity argument. 

For $p \in [0,1)$, the following conjecture -- that has deeply influenced the research in convex geometry in recent years -- was put forth by B\"or\"oczky, Lutwak, Yang and Zhang in \cite{BLYZ12}: 

\begin{conj}[$L_p$-Brunn-Minkowski Conjecture \cite{BLYZ12}] \label{c:lpbm} Let $0 \leq p < 1$. Given any pair of origin symmetric convex bodies $K,L \subset \R^n$ and $t \in (0,1)$, does it necessarily follow that 
\[
|(1-t)K+_p tL| \geq \mathcal{M}_{\frac p n}^{(t)} (|K|,|L|)?
\]
    
\end{conj}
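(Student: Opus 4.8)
The plan is to first recast the geometric statement in the sup-convolution language of this paper, thereby obtaining an equivalent functional form, and then to attack the genuinely geometric core by localization. I emphasize at the outset that Conjecture \ref{c:lpbm} is the celebrated $L_p$-Brunn-Minkowski conjecture and remains open for $0 \le p < 1$ in full generality; what follows is the route I would pursue and the precise point at which it stalls.

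First I would realize the $L_p$-combination as a sup-convolution in the sense of Definition \ref{def: sup convolution}, taking $\mathcal{A}_1 = \mathcal{A}_2$ to be the cone of origin-symmetric convex bodies and letting the continuous map $\Phi$ encode the Wulff-shape operation $(1-t)\cdot K_0 +_p t\cdot K_1$ through the mean $\mathcal{M}_p^{(t)}$ of support functions via \eqref{eq:lpsum}. By Theorem \ref{thm: abstract sup convolution theorem} this reduces the functional form of the conjecture to the single-scale inequality on scaled indicators, i.e. precisely to the geometric inequality itself. Thus this step yields an equivalent functional reformulation (exactly as Theorem \ref{thm:logPLintro} does for $p=0$) but does not by itself close the problem: the content must come from proving the geometric inequality for symmetric convex bodies, and the sup-convolution machinery here transports that difficulty faithfully rather than resolving it.

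For the geometric inequality I would pass to the infinitesimal version: fixing $K$ and differentiating the family $(1-t)\cdot K +_p t\cdot L$ twice along the $L_p$-path, the conjecture is implied by a concavity inequality which, through the Minkowski problem, is equivalent to a Poincaré-type spectral bound with a sharp constant for a weighted elliptic (Hilbert-Brunn-Minkowski) operator on $\partial K$. This is the Kolesnikov-Milman localization route, and the regimes $p$ near $1$, the unconditional case, and bodies close to the ball all follow once this spectral gap is established in those settings. I would then attempt to globalize the local inequality by a continuity and connectedness argument in the space of origin-symmetric bodies, using that the two sides agree when $K=L$.

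The hard part, and the reason the conjecture is still open, is exactly the spectral estimate for a general origin-symmetric $K$: the required eigenvalue lower bound fails without symmetry and is known only under extra structure, while the even case resists the available Bochner and curvature techniques. A secondary obstacle is the globalization step, which is delicate when $K$ is far from the ball and the uniqueness in the underlying Monge-Amp\`ere problem is not guaranteed. I therefore expect that any unconditional proof across the full range $0 \le p < 1$ will require a new spectral or optimal-transport input lying beyond the sup-convolution framework developed here.
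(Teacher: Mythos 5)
This statement is an open conjecture: the paper offers no proof of it, only an equivalent functional reformulation (Theorem \ref{thm: Lp Borell BL}, Theorem \ref{thm:equivalenoflpBM}, Corollary \ref{cor:equivalence}), and your proposal treats it in exactly the same way, since the sup-convolution reduction you describe is precisely that equivalence, which transports the difficulty rather than resolving it. Your further sketch of the Kolesnikov--Milman spectral/localization route lies outside the paper and, as you correctly concede, does not close the geometric core for general origin-symmetric bodies, so there is nothing to flag beyond the openness you already acknowledge.
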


It should be noted that the inequality appearing in Conjecture~\ref{c:lpbm} dramatically improves the classical Brunn-Minkowski inequality.  The case $p =0$ is known as the \emph{$\log$-Brunn-Minkowski conjecture}. Work towards this conjecture and related problems are rapidly developing and we do not attempt to cover all of the related papers and results.  Instead we choose to focus on a few important cases for which this conjecture was verified: for $n =2$ in \cite{BLYZ12}, locally near a Euclidean ball in \cite{CLM17}, and for unconditional convex bodies in \cite{CS15} when $p=0$ and for $0 < p < 1$ in \cite{Marsiglietti15}. Further it was verified in \cite{CS16} that the $\log$-Brunn-Minkowski inequality for volume implies the same inequality with volume replaced by any even $\log$-concave measure on $\R^n$; a similar result for $0 <p <1$ was established in \cite{LNMZ}. In \cite{KM22}, a general treatment of local $L_p$-Brunn-Minkowski inequalities were treated, and in \cite{CHLL20}, Conjecture~\ref{c:lpbm} was verified for to hold for all $p \in (p_0,1)$ for some  $p_0$ depending on the dimension.

In the following subsections we will demonstrate how our methods can be used to prove functional versions of geometric inequalities appearing in the $L_p$-Brunn-Minkowski theory. We also provide a functional equivalent of the geometric inequality appearing in Conjecture~\ref{c:lpbm}.



\subsection{The case $p \in [0,1)$}

Here we move to the case $p \in [0,1)$. We begin with a definition. 

\begin{defn} Let $0\leq p \leq 1$. Let $\mathcal{A}$ be the cone of all origin symmetric convex bodies in $\mathbb{R}^n$,  so that $\mathcal{F}(\mathcal{A})$ is the collection of all even  unimodal functions. For $f = (f_0,f_1)$, with $f_i \in \mathcal{F}(\mathcal{A})$, define the sup-convolution
    \begin{align} \label{eq: Lp BMI supconv}
        \square_p f (z) \coloneqq f_0 \square_{p} f_1(z) \coloneqq \sup_{(x_0,x_1) \in C(z;f)} \mathcal{M}_{- \frac p n}^{(t)}(f_0(x_0), f_1(x_1)) ,
    \end{align}
    where
    \[
        C_p(z;f) \coloneqq \left\{ (x_0,x_1) : z \in (1-t) \cdot \{f_0 \geq f_0(x_0) \} +_p t \cdot \{ f_1 \geq f_1(x_1) \} \right\}.
    \]
    \end{defn}

The next theorem assets that $\square_p$ is indeed a sup-convolution, in the sense of Definition \ref{def: sup convolution}, when $p \in [0,1]$.
    
    \begin{thm} \label{thm: Lp Borell BL}
    Let $p \in [0,1]$. 
        For $\mathcal{A}$ the cone of origin symmetric convex bodies, let $f=(f_0,f_1)$, with $f_i \in \mathcal{F}(\mathcal{A})$. Then the map $\square_p f$ defined in \eqref{eq: Lp BMI supconv} is a sup-convolution.
    \end{thm}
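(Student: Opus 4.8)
The plan is to verify the three defining properties of Definition~\ref{def: sup convolution} for $\square_p$, following the template of the proof of Theorem~\ref{thm: everything is sup convolution}; the essential new feature is that here the feasibility set $C_p(z;f)$ depends on $f$ through its super-level sets rather than through a fixed continuous map $\Phi$. It is convenient first to record that, since each $f_i$ is even and unimodal, its super-level sets $\{f_i \geq s\}$ are nested origin-symmetric convex bodies, and that the supremum defining $\square_p f$ may be read as a supremum over the attained levels $s_i = f_i(x_i)$: both the value $\mathcal{M}_{-\frac p n}^{(t)}(f_0(x_0),f_1(x_1))$ and the membership $z \in (1-t)\cdot\{f_0 \geq f_0(x_0)\} +_p t\cdot\{f_1 \geq f_1(x_1)\}$ depend on $(x_0,x_1)$ only through the pair of levels and the corresponding super-level sets. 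The single geometric input driving every step is the monotonicity of the $L_p$-Minkowski combination \eqref{eq:lpsum} under inclusion: if $A \subseteq A'$ and $B \subseteq B'$ are origin-symmetric convex bodies then $h_A \leq h_{A'}$ and $h_B \leq h_{B'}$, and since $\mathcal{M}_p^{(t)}$ is coordinatewise non-decreasing the W\"ulff shapes are nested, so $(1-t)\cdot A +_p t\cdot B \subseteq (1-t)\cdot A' +_p t\cdot B'$.

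Monotonicity, property~\eqref{item: monotonicity of sup conv}, then follows: if $f_i \leq g_i$ then $\{f_i \geq s\} \subseteq \{g_i \geq s\}$ for every $s$, so by inclusion-monotonicity each level admissible for $f$ at $z$ is admissible for $g$ at $z$ with the same credited value $\mathcal{M}_{-\frac p n}^{(t)}(s_0,s_1)$, and taking suprema gives $\square_p f \leq \square_p g$. (In the point formulation this additionally uses that the level $s_i = f_i(x_i) \leq g_i(x_i)$ is attained by $g_i$, which holds since $g_i$ takes a value $\geq s_i$ and decays to $0$; I suppress this regularity point in line with the conventions of Section~\ref{sec:general}.) For measurability, property~\eqref{item: sup-conv preserves step functions}, I compute $\square_p s$ directly for $s = (a_0 \mathbbm{1}_{A_0}, a_1 \mathbbm{1}_{A_1})$: the only non-trivial positive level of $s_i$ is $a_i$, with $\{s_i \geq a_i\} = A_i$, and a pair contributes a non-zero value only when $x_0 \in A_0$ and $x_1 \in A_1$, so
\[
\square_p s(z) = \mathcal{M}_{-\frac p n}^{(t)}(a_0,a_1)\,\mathbbm{1}_{(1-t)\cdot A_0 +_p t\cdot A_1}(z),
\]
a scaled indicator of the origin-symmetric convex body $(1-t)\cdot A_0 +_p t\cdot A_1$, which is Borel.

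The crux is super-additivity, property~\eqref{item: super additivity on sliced functions}, which I verify for step functions $f=(f_0,f_1)$, $g=(g_0,g_1)$ satisfying the slicing hypothesis that $g_i(x)>0$ forces $f_i(x) = \max f_i$. If $\square_p g(z)=0$ the claim is immediate from monotonicity; otherwise, since the $g_i$ take finitely many values, choose a maximizer $(x_0,x_1) \in C_p(z;g)$ with $\mathcal{M}_{-\frac p n}^{(t)}(g_0(x_0),g_1(x_1)) = \square_p g(z) > 0$, so $g_i(x_i)>0$ and hence $f_i(x_i)=\max f_i$. Two observations close the argument. First, because $f_i(x_i)=\max f_i \geq f_i(y_i)$ for all $y_i$, the value $\mathcal{M}_{-\frac p n}^{(t)}(f_0(x_0),f_1(x_1))$ dominates every competitor, while $\{g_i \geq g_i(x_i)\} \subseteq \{f_i \geq f_i(x_i)\}$ together with inclusion-monotonicity of $+_p$ shows $(x_0,x_1)\in C_p(z;f)$; hence $\square_p f(z)=\mathcal{M}_{-\frac p n}^{(t)}(f_0(x_0),f_1(x_1))$. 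Second, and this is the key point, the slicing hypothesis yields the level identity $\{f_i+g_i \geq (f_i+g_i)(x_i)\} = \{g_i \geq g_i(x_i)\}$, since on $\{g_i \geq g_i(x_i)\} \subseteq \{g_i>0\}$ one has $f_i = \max f_i$, whereas elsewhere $f_i+g_i$ falls short of $(f_i+g_i)(x_i)=\max f_i + g_i(x_i)$. Thus $(x_0,x_1)$ is feasible for $f+g$ with exactly the constraint set it had for $g$, and the Minkowski (super-additivity) inequality for the mean $\mathcal{M}_{-\frac p n}^{(t)}$, whose exponent $-\frac p n \leq 1$, gives
\[
\square_p f(z)+\square_p g(z) = \mathcal{M}_{-\frac p n}^{(t)}(f_0(x_0),f_1(x_1)) + \mathcal{M}_{-\frac p n}^{(t)}(g_0(x_0),g_1(x_1)) \leq \mathcal{M}_{-\frac p n}^{(t)}\big((f_0+g_0)(x_0),(f_1+g_1)(x_1)\big) \leq \square_p(f+g)(z).
\]
I expect the main obstacle to be precisely this super-additivity step: reconciling the three distinct super-level structures (of $f$, $g$, and $f+g$) at the maximizing point, for which the identity $\{f_i+g_i \geq (f_i+g_i)(x_i)\}=\{g_i \geq g_i(x_i)\}$ and the inclusion-monotonicity of $+_p$ are indispensable. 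By contrast, in Theorem~\ref{thm: everything is sup convolution} the constraint $\Phi(x)=z$ is independent of the functions, and this difficulty does not arise.
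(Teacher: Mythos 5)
Your proof is correct and follows essentially the same route as the paper's: monotonicity via inclusion-monotonicity of $+_p$ on super-level sets, super-additivity by locating a maximizer for $g$ at which each $f_i$ equals its maximum (so the $f$-, $g$-, and $(f+g)$-constraint sets nest as needed) combined with the reverse Minkowski inequality for $\mathcal{M}_{-\frac{p}{n}}^{(t)}$, and measurability via the explicit computation $\square_p s = \mathcal{M}_{-\frac{p}{n}}^{(t)}(a_0,a_1)\mathbbm{1}_{(1-t)\cdot A_0 +_p t\cdot A_1}$. The one point where the paper is more careful is monotonicity: rather than assuming the level $f_i(x_i)$ is attained by $g_i$ (which can fail for step functions), it uses compactness of $\{g_i \geq f_i(x_i)\} \in \mathcal{A}$ to choose $y_i$ minimizing $g_i$ on that set, so that $\{g_i \geq g_i(y_i)\} = \{g_i \geq f_i(x_i)\}$ and $g_i(y_i) \geq f_i(x_i)$ --- exactly the regularity point you flagged and suppressed.
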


\begin{proof}
    We first prove monotonicity. Let $f=(f_0,f_1), g=(g_0,g_1)$ with $f_i,g_i \in \mathcal{F}(\mathcal{A})$ satisfying $f_i \leq g_i$. Note that if $f_i$ is the zero function, $\square_p f(z)$ is identically zero and there is nothing to prove. Now suppose that $f_i, g_i \in \mathcal{F}(\mathcal{A})$, are such that $f_i \leq g_i$ for all $i$.  Fix $x = (x_0,x_1) \in  C_p(z;f)$ such that $f_0(x_0) f_1(x_1) >0$. By the condition $f_i \leq g_i$, we have 
    \[
        \{ f_i \geq f_i(x_i) \} \subseteq \{ g_i \geq f_i(x_i) \}.
    \]
    Since $\{ g_i \geq f_i(x_i) \} \in \mathcal{A}$, it is necessarily compact, and hence there exists $y_i \in \{ g_i \geq f_i(x_i) \}$ such that $g_i(y_i) \leq g_i(x)$ for all $x$ such that $g_i(x) \geq f_i(x_i)$.  Thus
    \[
        \{ g_i \geq g_i(y_i) \} = \{ g_i \geq f_i(x_i) \} \supseteq \{f_i \geq f_i(x_i) \}.
    \]
    Thus, $h_{\{ g_i \geq g_i(y_i) \} } \geq h_{\{ f_i \geq f_i(x_i) \} }$, and so we have that 
    \[
        z \in (1-t) \cdot \{ f_0 \geq f_0(x_0) \} +_p t \cdot \{f_1 \geq f_1(x_1) \} \subseteq (1-t) \cdot \{ g_0 \geq g_0(y_0) \} +_p t \cdot \{g_1 \geq g_1(y_1) \}.
    \]
    Consequently, $(y_0,y_1) \in C_p(z;g)$, and hence 
    \[
        \mathcal{M}_{- \frac p n}^{(t)}(f_0(x_0), f_1(x_1)) \leq \mathcal{M}_{- \frac p n}^{(t)}(g_0(y_0), g_1(y_1)) \leq \sup_{w \in C(z;g)} \mathcal{M}_{- \frac p n}^{(t)}(g_0(w_0), g_1(w_1)) = \square_p g(z).
    \]
    Taking the sup over all $x \in C_p(z,f)$ completes the proof of monotonicity.

{ Now
suppose that $f_i$ and $g_i$ are step functions and are such that $g_i(x_i) > 0$ implies that $f_i(x_i) \geq f_i(y_i)$ for all $y_i \in \mathbb{R}^n$.  We wish to show that $\square_p(f + g) \geq \square_p f + \square_p g$. If $\square_p g(z) = 0$, then $\square_p (f + g)(z) \geq \square_p f(z)$ by monotonicity.  Thus we can assume that $\square_p g(z) >0$. 
 Given $\varepsilon >0$  there exists $x = (x_0,x_1) \in C(z;g)$ such that
 \[
 \mathcal{M}_{- \frac p n}^{(t)}(g_0(x_0), g_1(x_1)) > \square_p g(z) - \varepsilon
\]
and $g_i(x_i)>0$ for all $i$.
 Since  $f_i(x_i) \geq f_i(y_i)$ for all $y_i$ (by our assumption), we get
 \[
\mathcal{M}_{- \frac p n}^{(t)}(f_0(x_0), f_1(x_1)) \geq \mathcal{M}_{- \frac p n}^{(t)}(f_0(y_0), f_1(y_1)).
 \]
 Taking the suprema of the right hand side of the above equation over $y \in C_p(z;f)$ gives
 \[
\mathcal{M}_{- \frac p n}^{(t)}(f_0(x_0),f_1(x_1)) \geq \square_p f(z).
 \]
 However, since $x \in C_p(z;g)$ we have by the hypothesis on $f$ and $g$, 
 \[
    \{ g_i \geq g_i(x_i)\} \subseteq \{ g_i > 0 \} \subseteq \{f_i \geq f_i(x_i)\},
 \]
 so that $$z \in (1-t) \cdot \{ g_0 \geq g_0(x_0) \} +_p t \cdot \{g_1 \geq g_1(x_1) \} \subseteq (1-t) \cdot \{f_0 \geq f_0(x_0) \} +_p t \cdot \{f_1 \geq f_1(x_1) \},$$
 whence $x \in C_p(z;f)$.  
 Thus,
 \[
\mathcal{M}_{- \frac p n}^{(t)}(f_0(x_0), f_1(x_1)) =  \square_p f(z).
 \]
 Applying the reverse triangle inequality, $\mathcal{M}_{- \frac p n}^{(t)}(u_0,u_1) + \mathcal{M}_{- \frac p n}^{(t)}(v_0,v_1) \leq \mathcal{M}_{- \frac p n}^{(t)}(u_0 + v_0, u_1 + v_1)$, we have
\begin{align*}
    \square_p f(z) + \square_p g(z) &\leq \mathcal{M}_{- \frac p n}^{(t)}(f_0(x_0), f_1(x_1)) + \mathcal{M}_{- \frac p n}^{(t)}(g_0(x_0), g_1(x_1)) + \varepsilon\\
    &\leq \mathcal{M}_{- \frac p n}^{(t)}((f_0 + g_0)(x_0), (f_1+g_1)(x_1)) + \varepsilon.
\end{align*}
 To complete the proof, it remains to see that $x \in C_p(z; f+g)$ so that the suprema can be taken, and then $\varepsilon \to 0$.  But this is immediate, as $\{g_i \geq g_i(x_i)\} \subseteq \{f_i \geq f_i(x_i) \} = \{f_i = f_i(x_i)\}$, 
 we infer that $\{g_i \geq g_i(x_i)\} \subseteq \{f_i + g_i \geq (f_i + g_i)(x_i) \}$; and we have
 \begin{align*}
    z \in (1-t) \cdot \{g_0 \geq g_0(x_0)\} &+_p t \cdot \{g_1 \geq g_1(x_1) \} \\
    &\subseteq (1-t) \cdot \{(f_0 + g_0)  \geq (f_0 + g_0)(x_0) \} +_p t \cdot \{ (f_1 + g_1) \geq (f_1 + g_1) (x_1) \}.
 \end{align*}

For $f_0 = \mathbbm{1}_{A}$ and $f_1 = \mathbbm{1}_B$ we claim that
\begin{align*}
    \square_p f (z) = \mathbbm{1}_{(1-t) \cdot A +_p t \cdot B}(z).
\end{align*}
$\square_p f(z) =1$ if there exists $a \in A$ and $b \in B$ such that $(a,b) \in C_p(z;f)$ and is $0$ otherwise, but $(a,b) \in C_p(z,f)$ is equivalent to 
\[
    z \in (1-t) \cdot \{ f_0 \geq f_0(a)\} +_p t \cdot \{ f_1 \geq f_1(b) \} = (1-t) \cdot \{ f_0 \geq 1\} +_p t \cdot \{ f_1 \geq 1 \} = (1-t) \cdot A +_p t \cdot B.
\]
Thus, the measurability of $\square_p f$ in this case follows from the fact that $(1-t) \cdot A +_p t \cdot B$ is a convex set.
}
\end{proof}

The next statement establishes an equivalent functional version of the
$L_p$-Brunn-Minkowski inequality.

\begin{thm} \label{thm:equivalenoflpBM}
    For $p \in [0,1)$, the $L_p$-Brunn-Minkowski inequality
    \[
        |(1-t) \cdot A +_p t \cdot B | \geq \mathcal{M}_{\frac p n}^{(t)} (|A|, |B|)
    \]
    holding for all origin symmetric convex bodies $A$ and $B$  in $\R^n$ is equivalent to the following functional inequality: given $\alpha \geq - \frac p n$, $t \in (0,1)$, and a triple of functions $h,f,g \colon \R^n \to [0,\infty)$, with $f,g$ even and unimodal, satisfying 
     \[
        h(z) \geq \mathcal{M}_\alpha^{(t)}(f(x),g(y)),
    \]
    where $$(x,y) \in C_p(z;f,g) \coloneqq \{ (u,v) : z \in (1-t)\cdot \{f \geq f(u) \} +_p t \cdot \{ g \geq g(v) \},$$
    then
    \[
        \int_{\R^n} h \geq \mathcal{M}_r^{(t)} \left( \int_{\R^n} f , \int_{\R^n} g \right)
    \]
    for $r = \frac{\alpha p}{p + \alpha n}$.
\end{thm}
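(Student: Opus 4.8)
The plan is to reduce Theorem~\ref{thm:equivalenoflpBM} to the abstract machinery already established, namely Theorem~\ref{thm: Lp Borell BL} (which guarantees that $\square_p$ is a genuine sup-convolution) together with Theorem~\ref{thm: abstract sup convolution theorem} (which upgrades an inequality on scaled indicators to the full functional inequality). The direction from functions to sets is immediate and I would dispose of it first: taking $f = \mathbbm{1}_A$ and $g = \mathbbm{1}_B$, the computation at the end of the proof of Theorem~\ref{thm: Lp Borell BL} shows $\square_p(f,g)(z) = \mathbbm{1}_{(1-t)\cdot A +_p t\cdot B}(z)$, so the functional inequality (for the smallest admissible $h$, which is $\square_p$ itself) collapses exactly to the $L_p$-Brunn-Minkowski inequality. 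Hence the substance is the converse.

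For the converse I would first treat the endpoint exponent $\alpha = -\tfrac{p}{n}$, where $\square_p$ is \emph{exactly} the sup-convolution from \eqref{eq: Lp BMI supconv} built from the mean $\mathcal{M}^{(t)}_{-p/n}$. Here the strategy mirrors the proof of Theorem~\ref{thm: Gaussian Borell Brascamp Lieb}: since $\square_p$ is a sup-convolution by Theorem~\ref{thm: Lp Borell BL}, Theorem~\ref{thm: abstract sup convolution theorem} tells me it suffices to verify the integrated inequality on scaled indicators $f = a\mathbbm{1}_A$, $g = b\mathbbm{1}_B$. For such functions $\int \square_p f = \mathcal{M}^{(t)}_{-p/n}(a,b)\,|(1-t)\cdot A +_p t\cdot B|$, and invoking the geometric hypothesis together with the elementary inequality $\mathcal{M}^{(t)}_{-p/n}(a,b)\,\mathcal{M}^{(t)}_{p/n}(|A|,|B|) \geq \min\{a|A|,\,b|B|\}$ (the $p+q>0$, $r\to-\infty$ consequence of \eqref{eq:holder-mean}) gives the base-case bound $\int \square_p f \geq \min\{a|A|, b|B|\} = \mathcal{M}^{(t)}_0$ of the relevant integrals; this is precisely the form $\nu(\square a)\geq \mathcal{M}^{(t)}_\alpha(\|a_i\|_{p_i})$ required to trigger Theorem~\ref{thm: abstract sup convolution theorem}. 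Since any $h$ satisfying the stated hypothesis dominates $\square_p(f,g)$ pointwise on the relevant set, $\int h \geq \int \square_p(f,g)$ and the endpoint case follows.

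For general $\alpha > -\tfrac{p}{n}$ I would lift from the endpoint by a Hölder argument identical in structure to the one used in Theorem~\ref{thm: Gaussian Borell Brascamp Lieb}. Writing the hypothesis $h(z)\geq \mathcal{M}^{(t)}_\alpha(f(x),g(y))$ and factoring out the integrals, one inserts the normalized functions $f/\!\int f$ and $g/\!\int g$ and applies \eqref{eq:holder-mean} with the exponent triple chosen so that $\tfrac{1}{-p/n} = \tfrac{1}{\alpha} + \tfrac{1}{q}$, splitting $\mathcal{M}^{(t)}_\alpha$ into a product $\mathcal{M}^{(t)}_{-p/n}(\cdot)\,\mathcal{M}^{(t)}_{q}(\int f,\int g)$; applying the already-established $\alpha = -p/n$ case to the first factor and simplifying the bookkeeping on exponents yields the claimed inequality with $r = \tfrac{\alpha p}{p+\alpha n}$.

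I expect the main obstacle to be purely organizational rather than conceptual, since the hard analytic content is already packaged in Theorems~\ref{thm: Lp Borell BL} and~\ref{thm: abstract sup convolution theorem}. The one point demanding genuine care is verifying that the hypothesis on $h$ — stated via membership $(x,y)\in C_p(z;f,g)$ — really does force $h \geq \square_p(f,g)$ pointwise, i.e.\ that taking the supremum over the admissible set $C_p(z;f,g)$ is legitimate and that the reduction to scaled indicators respects the containment structure of the superlevel sets defining $C_p$. A secondary subtlety is the exponent arithmetic in the Hölder step: one must confirm that for every $\alpha \geq -p/n$ the auxiliary exponent $q$ satisfying $\tfrac1r = \tfrac1\alpha+\tfrac1q$ type relations stays in the admissible range (in particular $p + q > 0$ so that \eqref{eq:holder-mean} applies) and that the final exponent simplifies to $r = \tfrac{\alpha p}{p+\alpha n}$.
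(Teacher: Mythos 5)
Your treatment of the substantive direction (geometric implies functional) follows the paper's proof essentially step for step: establish the endpoint $\alpha = -\tfrac{p}{n}$ by combining Theorem \ref{thm: Lp Borell BL} with Theorem \ref{thm: abstract sup convolution theorem} to reduce to scaled indicators, compute $\int \square_p (a\mathbbm{1}_A, b\mathbbm{1}_B) = \mathcal{M}_{-p/n}^{(t)}(a,b)\,|(1-t)\cdot A +_p t\cdot B|$, invoke the geometric hypothesis together with $\mathcal{M}_{-p/n}^{(t)}(a,b)\,\mathcal{M}_{p/n}^{(t)}(|A|,|B|) \geq \min\{a|A|,b|B|\}$, and then lift to $\alpha > -\tfrac{p}{n}$ by normalizing $f,g$ and splitting $\mathcal{M}_\alpha^{(t)}$ via \eqref{eq:holder-mean}. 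Two cosmetic slips: the endpoint bound is $\mathcal{M}_{-\infty}^{(t)}$ (the minimum), not $\mathcal{M}_0^{(t)}$, and the H\"older relation should be $\tfrac1\alpha = \tfrac{1}{-p/n} + \tfrac1r$ rather than with the roles of $\alpha$ and $-p/n$ swapped; you do land on the correct $r$.

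The genuine gap is in the direction you dismiss as immediate. With plain indicators $f=\mathbbm{1}_A$, $g=\mathbbm{1}_B$ and $h=\mathbbm{1}_{(1-t)\cdot A +_p t\cdot B}$, the functional inequality with exponent $\alpha$ yields only $|(1-t)\cdot A +_p t\cdot B| \geq \mathcal{M}_r^{(t)}(|A|,|B|)$ with $r = \tfrac{\alpha p}{p+\alpha n}$, and for every finite $\alpha$ one has $r < \tfrac pn$ strictly, so this is \emph{weaker} than the $L_p$-Brunn-Minkowski inequality; it does not ``collapse exactly.'' The paper repairs this by testing the functional inequality on \emph{scaled} indicators $f = a\mathbbm{1}_A$, $g=b\mathbbm{1}_B$, obtaining $\mathcal{M}_\alpha^{(t)}(a,b)\,|(1-t)\cdot A +_p t\cdot B| \geq \mathcal{M}_r^{(t)}(a|A|,b|B|)$, and then choosing $a = |A|^{p/(\alpha n)}$, $b = |B|^{p/(\alpha n)}$ so that equality holds in H\"older, i.e. $\mathcal{M}_r^{(t)}(a|A|,b|B|) = \mathcal{M}_\alpha^{(t)}(a,b)\,\mathcal{M}_{p/n}^{(t)}(|A|,|B|)$; dividing by $\mathcal{M}_\alpha^{(t)}(a,b)$ recovers the full geometric inequality from a single value of $\alpha$. (One could instead let $\alpha \to \infty$ with unscaled indicators, since $r \uparrow \tfrac pn$, but that requires the functional statement for all $\alpha$ and is not what you wrote.) You need to add this optimization step for the equivalence to close.
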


\begin{proof}
    If the set theoretic result holds, by Theorem \ref{thm: Lp Borell BL} the operation $f \square_{p} g$ is a sup-convolution, so by Theorem \ref{thm: abstract sup convolution theorem} is suffices to prove the functional inequality  when $f = a \mathbbm{1}_A$ and $g = b \mathbbm{1}_B$, where $A$ and $B$ are origin symmetric convex bodies, and $a,b >0$.  In this case
    \[
        h(z) \geq f \square_p  g (z) = \mathcal{M}_{-\frac p n}^{(t)} (a,b) \mathbbm{1}_{(1-t) \cdot A +_p t \cdot B}.
    \]
    Integrating both side and applying H\"older's inequality \eqref{eq:holder-mean} and the $L_p$-Brunn-Minkowski inequality, we have
    \begin{align*}
        \int_{\R^n} h \geq \int_{\R^n} f \square_p g 
        &=
            \mathcal{M}_{-\frac p n}^{(t)}(a,b) |(1-t) \cdot A +_p t \cdot B|
                \\
        &\geq
            \mathcal{M}_{-\frac p n}^{(t)}(a,b) \mathcal{M}_{\frac p n}^{(t)}(|A|, |B|)
                \\
        &=
            \mathcal{M}_{- \infty}^{(t)}( a |A|, b |B|).
    \end{align*}
    This yields the functional inequality when $\alpha = - \frac p n$.  When $\alpha > - \frac p n$, we have by assumption, and then by H\"older's inequality \eqref{eq:holder-mean}, that 
    \begin{align*}
        h(z) 
            &\geq 
                \sup_{(x,y) \in C(z;f,g)} \mathcal{M}_{\alpha}^{(t)} \left( \frac{f(x)\int_{\R^n} f }{\int_{\R^n} f } , \frac{g(y) \int_{\R^n} g }{\int_{\R^n} g }\right)
                    \\
            &\geq
                \sup_{(x,y) \in C(z;f,g)} \mathcal{M}_{-\frac p n}^{(t)}\left( \frac{f(x)}{\int_{\R^n} f }, \frac{g(y)}{\int_{\R^n} g} \right) \mathcal{M}_r^{(t)}\left( \int_{\R^n} f , \int_{\R^n} g \right).
    \end{align*}
    Now observing the scaling invariance of $C(z;f,g)$, that is that $C(z; \lambda f, t g) = C(z; f, g)$ for $\lambda, t > 0$ we see that the above inequality can be written as
    \[
        h(z) \geq \frac{f}{\int_{\R^n} f} \square_p \frac{g}{\int_{\R^n} g }(z)\mathcal{M}_r^{(t)}\left( \int_{\R^n} f, \int_{\R^n} g \right).
    \]
    Integrating, we conclude that
\begin{align*}
        \int_{\R^n} h &\geq \mathcal{M}_r^{(t)}\left( \int_{\R^n} f, \int_{\R^n} g \right) \int_{\R^n} \left( \frac{f}{\int_{\R^n} f} \square_p \frac{g}{\int_{\R^n} g }  \right)\\
        &\geq \mathcal{M}_r^{(t)}\left( \int_{\R^n} f , \int_{\R^n} g \right) \mathcal{M}_{-\infty} \left( \int_{\R^n} \frac {f}{\int_{\R^n} f} , \int_{\R^n} \frac{g}{\int_{\R^n} g } \right)\\
        &= \mathcal{M}_r^{(t)} \left( \int_{\R^n} f , \int_{\R^n} g \right).
\end{align*}
    For the converse, if there exists $\alpha \geq - \frac p n$ such that the functional inequality holds, with $A$ and $B$ origin symmetric convex bodies and let $f = a \mathbbm{1}_A$, $g = b \mathbbm{1}_B$.  Applying the functional inequality gives
    \[
        \mathcal{M}_\alpha^{(t)}(a,b) |(1-t) \cdot A +_p t \cdot B| \geq \mathcal{M}_r^{(t)}(a |A|, b |B|) = \mathcal{M}_\alpha^{(t)}(a,b) \mathcal{M}_{\frac p n}^{(t)}(|A|,|B|),
    \]
    where the last equality gives the result by taking $a = |A|^{\frac{p}{\alpha n}}$ $ b = |B|^{\frac{p}{\alpha n }}$, chosen to satisfy equality in H\"older.
\end{proof}


Again, in cases where geometric inequalities are already verified, with Theorem~\ref{thm:equivalenoflpBM} in hand, one obtains new $L_p$-Borell-Brascamp-Lieb type inequalities for $p <1$. For example we use the following result from \cite{CHLL20}.

\begin{alphatheorem}[Chen, Huang, Li, \& Liu, \cite{CHLL20}] \label{thm:partialLpBM} There exists a $p_0 \in (0,1)$, depending on the dimension, such that for all $p \in (p_0,1)$  and for any pair of origin symmetric convex bodies $K,L \subset \R^n$ and any $t \in [0,1]$, it holds that 
\[
|(1-t) \cdot K +_{p} t \cdot L| \geq \mathcal{M}_{\frac{p}{n} }^{(t)}(|K|,|L|). 
\]
\end{alphatheorem}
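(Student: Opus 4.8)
The plan is to treat this as a statement about origin-symmetric bodies and to prove it by the local-to-global method introduced in \cite{CLM17} and systematized in \cite{KM22}, combined with a quantitative spectral estimate for the Hilbert--Brunn--Minkowski operator. First I would reduce to smooth, strictly convex, origin-symmetric bodies with support function $h \in C^\infty(S^{n-1})$: since both sides are continuous under Hausdorff convergence and the $+_p$ combination is continuous in its arguments, the general case follows by approximation. I would then invoke the fact (from \cite{KM22}) that the global even $L_p$-Brunn--Minkowski inequality for all origin-symmetric $K,L$ is \emph{equivalent} to its infinitesimal (local) form, so that it suffices to establish the local inequality at every such $K$ and then run a continuity-and-connectedness argument in the parameter $p$: the set of $p$ for which the global inequality holds is closed, and the local inequality makes it relatively open, so it propagates from $p=1$ (the classical Brunn--Minkowski inequality) down to some threshold $p_0$.

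The heart of the matter is the local inequality. Writing a one-parameter family $h_s = h + s\phi$ with $\phi$ even and differentiating the functional governing $L_p$-Brunn--Minkowski twice, one is led to a second-variation quadratic form whose nonnegativity is a Poincar\'e-type inequality on $S^{n-1}$ with respect to the cone-volume measure $dV_K = \tfrac{1}{n} h \det(\nabla^2 h + h\,\mathrm{Id})\,d\theta$. Concretely, following the formulation of \cite{KM22}, the local inequality at $K$ amounts to
\[
    \int_{S^{n-1}} \phi\,\mathcal{L}_K\phi \; dV_K \; - \; \frac{\big(\int_{S^{n-1}} \phi\, dV_K\big)^2}{|K|} \; \geq \; (1-p)\,\Var_{V_K}(\phi)
\]
for every even $\phi$, where $\mathcal{L}_K \phi = -\tfrac{1}{\det(\nabla^2 h + h\,\mathrm{Id})}\operatorname{div}\!\big(\operatorname{cof}(\nabla^2 h + h\,\mathrm{Id})\,\nabla\phi\big)$ is the self-adjoint Hilbert--Brunn--Minkowski operator.

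At $p=1$ the right-hand side vanishes and the resulting inequality is exactly the local form of the classical Brunn--Minkowski inequality, whose extremals are the \emph{linear} functions $\phi = \langle\cdot, v\rangle$; since these are odd, restricting to even $\phi$ leaves a \emph{strict} gap. The strategy for $p<1$ is to upgrade this strict gap into a quantitative one: I would prove that there is a constant $c > 0$, \emph{independent of the body $K$}, such that the $p=1$ left-hand side dominates $c\,\Var_{V_K}(\phi)$ for all even $\phi$. Any $p_0 \geq 1-c$ then works, because the displayed inequality follows immediately.

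Establishing such a uniform bound is where the real work lies, and it is the step I expect to be the main obstacle: one must show that the first nonzero eigenvalue of $\mathcal{L}_K$ restricted to even functions stays bounded below, away from the borderline value $1$ (the eigenvalue of the excluded linear functions), by an amount that does not degenerate as $K$ ranges over \emph{all} origin-symmetric bodies. This requires delicate integration-by-parts identities and elliptic estimates for $\mathcal{L}_K$ that exploit origin-symmetry precisely through the exclusion of odd eigenfunctions; without uniformity in $K$ one cannot extract a single dimensional threshold $p_0$, and it is exactly this uniform quantitative spectral gap that \cite{CHLL20} supplies.
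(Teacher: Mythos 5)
This statement is quoted in the paper as an external result of Chen, Huang, Li and Liu \cite{CHLL20}; the paper supplies no proof of it and uses it purely as a black box, so there is no internal argument to compare yours against. Judged on its own terms, your outline does track the strategy actually used in the literature: reduce to smooth strictly convex origin-symmetric bodies, pass to the infinitesimal (second-variation) form of the inequality, and control the resulting quadratic form by a spectral estimate for the Hilbert--Brunn--Minkowski operator on even functions. Two caveats are worth recording. First, the attribution in your sketch is essentially reversed: the uniform quantitative spectral gap on even functions (your ``constant $c>0$ independent of the body'') is the contribution of Kolesnikov and Milman \cite{KM22}, who thereby obtained the \emph{local} even $L_p$-Brunn--Minkowski inequality for $p$ in a dimension-dependent range below $1$; what \cite{CHLL20} supplies is the \emph{local-to-global} implication. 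Second, that local-to-global step is not the soft ``closed plus relatively open, hence all of $(p_0,1)$'' connectedness argument you describe. The openness is the hard part: it is established via a continuity/deformation scheme for the associated Monge--Amp\`ere equation, hinging on uniqueness (up to the obvious invariances) of solutions to the even $L_p$-Minkowski problem for $p$ near $1$, together with a priori regularity estimates. Without that uniqueness input, knowing the local inequality at every body does not by itself make the set of good $p$ open.

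So your proposal is a correct roadmap of the known proof, but both of its load-bearing steps --- the uniform even spectral gap and the rigorous local-to-global passage --- are deferred to the cited references rather than proved. Since the paper itself also treats the theorem as an imported result, this is an acceptable level of detail for the present context, but it should be presented as a summary of \cite{KM22} and \cite{CHLL20} rather than as an independent proof.
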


\begin{thm} Let $p_0$ be as in Theorem~\ref{thm:partialLpBM} and let $p \in (p_0,1)$. Let $\alpha \geq - \frac{p}{n}$, and $h,f,g \colon \R^n \to [0,\infty)$ be a triple of functions, with $f,g$ being even and unimodal, such that 
\[
h(z) \geq \mathcal{M}_\alpha^{(t)}(f(x),g(y))
\]
for $$(x,y) \in C_p(z;f,g) \coloneqq \{ (u,v) : z \in (1-t)\cdot \{f \geq f(u) \} +_p t \cdot \{ g \geq g(v) \}.$$
Then the following $L_p$-Borell-Brascamp-Lieb inequality holds: 
\[
\int_{\R^n}h \geq \mathcal{M}^{(t)}_r\left(\int_{\R^n}f, \int_{\R^n}g \right),
\]
where $r = \frac{p\alpha}{p+\alpha n}$. 
\end{thm}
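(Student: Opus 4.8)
The plan is to obtain this statement as an immediate instance of the equivalence already established in Theorem~\ref{thm:equivalenoflpBM}. That theorem asserts that, for a fixed $p \in [0,1)$, the geometric $L_p$-Brunn-Minkowski inequality
\[
|(1-t) \cdot A +_p t \cdot B| \geq \mathcal{M}_{\frac p n}^{(t)}(|A|,|B|)
\]
for all origin symmetric convex bodies $A,B \subset \R^n$ is \emph{equivalent} to the functional $L_p$-Borell-Brascamp-Lieb inequality in the precise form stated here, carrying the same exponent $r = \frac{p\alpha}{p+\alpha n}$ and the same domain constraint $(x,y) \in C_p(z;f,g)$. Consequently, the only thing I would need to supply is the validity of the geometric hypothesis on the range $p \in (p_0,1)$.

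This hypothesis is exactly the content of Theorem~\ref{thm:partialLpBM}: for $p \in (p_0,1)$ and any pair of origin symmetric convex bodies $K,L \subset \R^n$, one has $|(1-t) \cdot K +_p t \cdot L| \geq \mathcal{M}_{\frac p n}^{(t)}(|K|,|L|)$. Feeding this into the ``geometric $\Rightarrow$ functional'' direction of Theorem~\ref{thm:equivalenoflpBM} yields the claimed functional inequality for every $\alpha \geq -\frac p n$ and every admissible triple $(h,f,g)$ with $f,g$ even and unimodal.

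No genuine obstacle arises here, because all of the analytic work has already been carried out upstream: Theorem~\ref{thm: Lp Borell BL} verifies that $\square_p$ is a sup-convolution, Theorem~\ref{thm: abstract sup convolution theorem} reduces the functional inequality to the case of scaled indicators, and the H\"older-type passage from $\alpha = -\frac p n$ to general $\alpha \geq -\frac p n$ is performed inside the proof of Theorem~\ref{thm:equivalenoflpBM}. The proof is therefore a two-step citation: first invoke Theorem~\ref{thm:partialLpBM} to secure the geometric inequality on $(p_0,1)$, then invoke the forward direction of Theorem~\ref{thm:equivalenoflpBM} to transfer it to the stated functional form. The one point I would be careful to check is that the exponent $r$ and the constraint set $C_p(z;f,g)$ coincide verbatim between the two statements, which they do.
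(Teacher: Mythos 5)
Your proposal is correct and is exactly the argument the paper intends: the theorem is stated immediately after Theorem~\ref{thm:partialLpBM} as a direct consequence of feeding that geometric inequality into the ``geometric $\Rightarrow$ functional'' direction of Theorem~\ref{thm:equivalenoflpBM}, with matching exponent $r$ and constraint set $C_p(z;f,g)$. The paper offers no further proof, so your two-step citation coincides with its reasoning.
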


Next, we continue with the case of $p=0$. For starters, we take note in the next results that the $\log$-Brunn-Minkowski conjecture enjoys the following self-improvement results, which respectively appear in \cite{CS16} and \cite{LNMZ}.

\begin{alphatheorem}[Saroglou, \cite{CS16}]
    If 
    $|A^{1-t}B^{t}| \geq |A|^{1-t}|B|^t$ holds for all origin symmetric convex bodies $A, B \subset \R^n$ and all $t\in [0,1]$, then
    \[
        \mu(A^{1-t}B^t) \geq \mu^{1-t}(A)\mu^{t}(B)
    \]
    whenever $A,B$ are origin symmetric convex bodies and $\mu$ is an even log-concave measure.
\end{alphatheorem}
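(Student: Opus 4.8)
The plan is to deduce this measure inequality from the \emph{functional} $\log$-Prékopa–Leindler inequality that the hypothesis already hands us. Indeed, the standing assumption is precisely that $|A^{1-t}B^t|\ge |A|^{1-t}|B|^t$ for all even convex bodies, which by Theorem~\ref{thm:logPLintro} (equivalently, Theorem~\ref{thm:equivalenoflpBM} with $p=0$) is equivalent to the functional statement: whenever $f,g$ are even and unimodal and $h(z)\ge f(u)^{1-t}g(v)^t$ for every $(u,v)\in C(z;f,g)$, one has $\int h\ge\bigl(\int f\bigr)^{1-t}\bigl(\int g\bigr)^{t}$. So it suffices to exhibit a triple $(f,g,h)$ whose integrals reproduce $\mu(A)$, $\mu(B)$, and $\mu(A^{1-t}B^{t})$, and to verify the pointwise hypothesis for this triple.

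I would write $\mu=\varphi\,dx$ with $\varphi=e^{-W}$, where $W\colon\R^n\to(-\infty,+\infty]$ is even and convex; since $\varphi$ is integrable and even log-concave, its proper super-level sets $\{\varphi\ge s\}=\{W\le-\log s\}$ are bounded (by integrability) even convex bodies, so $\varphi$ is even and unimodal. Then set $f=\varphi\,\mathbbm 1_A$, $g=\varphi\,\mathbbm 1_B$, and $h=\varphi\,\mathbbm 1_{A^{1-t}B^{t}}$. Each of $f,g$ is again even and unimodal because $\{f\ge s\}=\{\varphi\ge s\}\cap A$ is an intersection of even convex bodies, and by construction $\int f=\mu(A)$, $\int g=\mu(B)$, $\int h=\mu(A^{1-t}B^{t})$. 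All measurability and even-unimodality requirements are then immediate from the product structure, and the whole matter reduces to checking the pointwise hypothesis of Theorem~\ref{thm:logPLintro}.

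The crux — and the step I expect to be the main obstacle — is verifying $h(z)\ge f(u)^{1-t}g(v)^{t}$ for every $(u,v)\in C(z;f,g)$. One only needs to treat $u\in A$, $v\in B$, since otherwise the right-hand side vanishes; there $f(u)=\varphi(u)$, $g(v)=\varphi(v)$, and $\{f\ge f(u)\}=\{W\le W(u)\}\cap A$, $\{g\ge g(v)\}=\{W\le W(v)\}\cap B$. The membership $z\in\{f\ge f(u)\}^{1-t}\{g\ge g(v)\}^{t}$ yields two things. First, by monotonicity of the $\log$-combination under inclusion (immediate from the definition \eqref{eq:lpsum} via support functions), $z\in A^{1-t}B^{t}$, so $h(z)=\varphi(z)$. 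Second, using the nesting $K_0^{1-t}K_1^{t}=(1-t)\cdot K_0+_0 t\cdot K_1\subseteq(1-t)K_0+tK_1$ — which follows from the monotonicity $\mathcal{M}_0^{(t)}\le\mathcal{M}_1^{(t)}$ of the means applied to the nonnegative support functions in \eqref{eq:lpsum} — one obtains $z\in(1-t)\{W\le W(u)\}+t\{W\le W(v)\}$. Convexity of $W$ then gives $W(z)\le(1-t)W(u)+tW(v)$, that is $\varphi(z)\ge\varphi(u)^{1-t}\varphi(v)^{t}$, which is exactly the required $h(z)\ge f(u)^{1-t}g(v)^{t}$.

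With the hypothesis verified, Theorem~\ref{thm:logPLintro} delivers $\mu(A^{1-t}B^{t})=\int h\ge\bigl(\int f\bigr)^{1-t}\bigl(\int g\bigr)^{t}=\mu(A)^{1-t}\mu(B)^{t}$, which is the claim. The essential geometric input is thus the interplay, at the heart of the third paragraph, between the $\log$-combination of super-level sets, its containment in the ordinary Minkowski combination, and the convexity of $W=-\log\varphi$; everything else is bookkeeping on the integrals and the preservation of even unimodality under multiplication by an indicator of an even convex body.
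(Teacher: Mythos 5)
Your argument is correct, but note that the paper does not prove this statement at all: it is imported as a black box from Saroglou \cite{CS16} (hence the alphatheorem numbering), so there is no internal proof to compare against. What you have written is a self-contained derivation using the paper's own machinery, and it is non-circular: Theorem~\ref{thm:logPLintro} (equivalently Theorem~\ref{thm:equivalenoflpBM} at $p=0$, $\alpha=0$) is proved in the paper via Theorems~\ref{thm: Lp Borell BL} and~\ref{thm: abstract sup convolution theorem}, neither of which invokes Saroglou's result; only Corollary~\ref{cor:equivalence} does, and you do not use that corollary. The three key verifications all hold: monotonicity of $K_0^{1-t}K_1^{t}$ under inclusion of the $K_i$ (immediate from monotonicity of support functions and of $\mathcal{M}_0^{(t)}$ in \eqref{eq:lpsum}), the containment $K_0^{1-t}K_1^{t}\subseteq(1-t)K_0+tK_1$ (AM--GM on support functions), and the resulting bound $W(z)\le(1-t)W(u)+tW(v)$ from convexity of $W$, which is exactly the pointwise hypothesis for $h=\varphi\mathbbm 1_{A^{1-t}B^{t}}$. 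The only points I would tighten are at the level of rigor the paper itself waves off: you should say a word about why $\{\varphi\ge s\}\cap A$ is a convex \emph{body} (bounded because an integrable log-concave $\varphi$ has bounded positive super-level sets; non-empty interior for $0<s<\varphi(0)$ because $W$ attains its minimum at the origin, which lies in the interior of $A$; the top level set may degenerate but is a null issue), and about the case $f(u)g(v)=0$, where $\{f\ge 0\}=\R^n$ is not a body so $C(z;f,g)$ is formally ill-defined there --- you correctly dismiss it since the right-hand side vanishes. As a final remark, your route is essentially the same functional-lifting strategy Saroglou uses in \cite{CS16}, so this is a fitting illustration of the paper's thesis rather than a genuinely new proof.
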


\begin{alphatheorem}[Livshyts, Marsiglietti, Nayar and Zvavitch, \cite{LNMZ}]
    For $\mu$ an even log-concave measure such that 
    \[
        \mu(A^{1-t} B^t) \geq \mu^{1-t}(A)\mu^t(B)
    \]
    holds for all $t \in [0,1]$ and $A,B \subset \R^n$ origin symmetric convex bodies, then for $p >0$
    \[
        \mu((1-t)\cdot A +_p t \cdot B) \geq \mathcal{M}_{\frac{p}{n}}^{(t)}(\mu(A), \mu(B))
    \]
    holds for all $t \in [0,1]$ and $A,B$ origin symmetric convex bodies.
\end{alphatheorem}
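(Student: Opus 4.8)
The plan is to deduce the $L_p$ inequality from the hypothesized $\log$ (that is, $p=0$) inequality by combining a support-function inclusion, a one-sided homogeneity estimate for even log-concave measures, and a final one-parameter optimization. First I would record the geometric inclusion: for every $s\in(0,1)$ one has $c(s)\cdot A^{1-s}B^s \subseteq (1-t)\cdot A +_p t\cdot B$, where $c(s)$ is the largest constant for which $c(s)\,h_A^{1-s}(\theta)h_B^s(\theta)\leq \mathcal{M}_p^{(t)}(h_A(\theta),h_B(\theta))$ holds for all $\theta\in\mathbb{S}^{n-1}$. Since both bodies are the W\"ulff shapes of the respective support functions, pointwise domination of the functions yields the inclusion, and minimizing $\rho\mapsto \rho^{-s}((1-t)+t\rho^p)^{1/p}$ over the ratio $\rho=h_B/h_A$ gives the explicit sharp value $c(s)^p=\left(\frac{t}{s}\right)^s\left(\frac{1-t}{1-s}\right)^{1-s}$.

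Next I would establish the measure estimate $\mu(c\cdot K)\geq c^n\mu(K)$ for $0<c\leq 1$ and any Borel $K$. Writing $d\mu=e^{-V}\,dx$ with $V$ convex and even, convexity and evenness force the minimum of $V$ at the origin, so $r\mapsto V(rx)$ is nondecreasing on $[0,\infty)$; hence $V(cx)\leq V(x)$ for $c\leq 1$ and $\mu(c\cdot K)=c^n\int_K e^{-V(cx)}\,dx\geq c^n\int_K e^{-V}\,dx$. Crucially, $c(s)\leq 1$ for \emph{every} $s\in(0,1)$: the inequality $c(s)^p\leq 1$ is exactly $t^s(1-t)^{1-s}\leq s^s(1-s)^{1-s}$, which is the nonnegativity of the relative entropy of $(s,1-s)$ with respect to $(t,1-t)$ and is an equality only at $s=t$. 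Thus the homogeneity estimate is always applicable.

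Chaining these facts with the $\log$-Brunn--Minkowski hypothesis applied to the symmetric bodies $A,B$ gives, for every $s\in(0,1)$,
\[
\mu\big((1-t)\cdot A +_p t\cdot B\big)\ \geq\ \mu\big(c(s)\cdot A^{1-s}B^s\big)\ \geq\ c(s)^n\,\mu(A^{1-s}B^s)\ \geq\ c(s)^n\,\mu(A)^{1-s}\mu(B)^s.
\]
It then remains to optimize the right-hand side over $s$. Writing $a=\mu(A)$, $b=\mu(B)$ and raising to the power $p/n$, the quantity $c(s)^n a^{1-s}b^s$ becomes $\big((tb^{p/n})/s\big)^s\big(((1-t)a^{p/n})/(1-s)\big)^{1-s}$, and an elementary single-variable optimization (a weighted AM--GM in disguise, with optimum $s^*=\tfrac{tb^{p/n}}{(1-t)a^{p/n}+tb^{p/n}}$) evaluates the maximum to $(1-t)\mu(A)^{p/n}+t\mu(B)^{p/n}$. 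Raising back to the power $n/p$ produces exactly $\mathcal{M}_{p/n}^{(t)}(\mu(A),\mu(B))$, which closes the argument.

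The conceptual obstacle is that the naive inclusion $A^{1-s}B^s\subseteq (1-t)\cdot A +_p t\cdot B$ (the case $s=t$, $c=1$), combined with the $\log$-hypothesis, recovers only the weaker bound $\mathcal{M}_0^{(t)}$; the dimensional gain to $\mathcal{M}_{p/n}^{(t)}$ must be extracted by simultaneously shrinking the geometric combination by a factor $c(s)<1$ and paying for it through the one-sided homogeneity $\mu(c\cdot K)\geq c^n\mu(K)$, and then optimizing this trade-off in $s$. Technically the only points demanding care are the sharp value of $c(s)$ and the uniform bound $c(s)\leq 1$; pleasantly, both reduce to the same relative-entropy inequality that also powers the final optimization, so once that inequality is in hand the argument proceeds without further obstruction.
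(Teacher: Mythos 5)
Your argument is correct: the dilate inclusion $c(s)\cdot A^{1-s}B^{s}\subseteq (1-t)\cdot A+_{p}t\cdot B$ with $c(s)^{p}=(t/s)^{s}((1-t)/(1-s))^{1-s}$, the bound $\mu(cK)\geq c^{n}\mu(K)$ for even log-concave $\mu$ and $c\leq 1$, the relative-entropy verification that $c(s)\leq 1$, and the final optimization at $s^{*}=tb^{p/n}/((1-t)a^{p/n}+tb^{p/n})$ all check out and assemble into exactly $\mathcal{M}_{p/n}^{(t)}(\mu(A),\mu(B))$. Note that the paper states this result only as a citation to Livshyts--Marsiglietti--Nayar--Zvavitch and gives no proof of its own, so there is nothing internal to compare against; your reconstruction is the standard argument from that source and is complete as written.
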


As consequence of these results and Theorem~\ref{thm:equivalenoflpBM}, we obtain the following equivalent form of Conjecture~\ref{c:lpbm}. 

\begin{cor}\label{cor:equivalence}
    The log-Brunn-Minkowski conjecture is true if and only if its functional extension holds.  Moreover the log-Brunn-Minkowski inequality implies that the $L_p$-Borell-Brascamp-Lieb inequalities hold for all even unimodal  functions with respect to even log-concave  measures.  
\end{cor}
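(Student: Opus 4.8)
The plan is to treat the biconditional and the ``moreover'' clause separately. For the equivalence I would simply specialize Theorem~\ref{thm:equivalenoflpBM} to the endpoint $p=0$. There the $L_p$-combination $(1-t)\cdot A +_0 t\cdot B$ is the geometric mean $A^{1-t}B^t$, the geometric side reads $|A^{1-t}B^t|\geq \mathcal{M}_0^{(t)}(|A|,|B|)=|A|^{1-t}|B|^t$, which is exactly the log-Brunn-Minkowski conjecture, and (taking $\alpha=0$, whence $r=0$) the functional side becomes the log-Pr\'ekopa-Leindler inequality recorded in Theorem~\ref{thm:logPLintro}. Thus the geometric conjecture and its functional extension are literally two faces of the same statement, and nothing further is needed for the ``if and only if''.

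For the second assertion I would run the measure-dependent chain Saroglou, then Livshyts--Marsiglietti--Nayar--Zvavitch, and feed the output into the abstract machinery. Assuming the log-Brunn-Minkowski inequality for volume, the theorem of Saroglou \cite{CS16} upgrades it to $\mu(A^{1-t}B^t)\geq \mu(A)^{1-t}\mu(B)^t$ for every even log-concave measure $\mu$ and all origin-symmetric convex bodies $A,B$; the theorem of Livshyts, Marsiglietti, Nayar and Zvavitch \cite{LNMZ} then promotes this to the full $L_p$ statement $\mu((1-t)\cdot A +_p t\cdot B)\geq \mathcal{M}_{\frac{p}{n}}^{(t)}(\mu(A),\mu(B))$ for every $p>0$, the case $p=0$ being Saroglou's conclusion itself.

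It remains to transfer these geometric inequalities for $\mu$ into functional $L_p$-Borell-Brascamp-Lieb inequalities for $\mu$. Here I would observe that the whole argument of Theorem~\ref{thm:equivalenoflpBM} is measure-agnostic: the sup-convolution $\square_p$ of \eqref{eq: Lp BMI supconv} is defined purely geometrically and is shown in Theorem~\ref{thm: Lp Borell BL} to be a sup-convolution without any reference to a measure, while the abstract equivalence of Theorem~\ref{thm: abstract sup convolution theorem} applies to an arbitrary triple $(\nu,\mu,\square)$. Taking $\nu=\mu$ and each ambient factor equal to $\mu$, the sole measure-dependent input is the base case on scaled indicators $f=a\mathbbm{1}_A$, $g=b\mathbbm{1}_B$, namely
\begin{align*}
\mu\big(f\square_p g\big)
&=\mathcal{M}_{-\frac{p}{n}}^{(t)}(a,b)\,\mu\big((1-t)\cdot A +_p t\cdot B\big)\\
&\geq \mathcal{M}_{-\frac{p}{n}}^{(t)}(a,b)\,\mathcal{M}_{\frac{p}{n}}^{(t)}(\mu(A),\mu(B))\\
&=\mathcal{M}_{-\infty}^{(t)}(a\mu(A),b\mu(B)),
\end{align*}
which is precisely the geometric $L_p$-inequality for $\mu$ just established. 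With this base case in hand, Theorem~\ref{thm: abstract sup convolution theorem} yields the functional inequality at $\alpha=-\frac{p}{n}$ (where $r=-\infty$), and the passage to general $\alpha\geq -\frac{p}{n}$ follows verbatim as in the proof of Theorem~\ref{thm:equivalenoflpBM}, by normalizing $f,g$ by their $\mu$-integrals, invoking the scaling invariance $C_p(z;\lambda f,s g)=C_p(z;f,g)$, and applying H\"older's inequality \eqref{eq:holder-mean}.

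The step I expect to demand the most care is this last transfer: confirming that Theorem~\ref{thm:equivalenoflpBM}, whose statement is phrased for Lebesgue measure, really does carry over word for word once $\mu$ replaces the volume both as the integrating measure and in the ambient $L_1$ norms. The substantive content is that the geometric inequality for indicators --- the single hypothesis of Theorem~\ref{thm: abstract sup convolution theorem} --- holds for $\mu$, which is exactly what the Saroglou and Livshyts--Marsiglietti--Nayar--Zvavitch results guarantee; the remaining manipulations (the H\"older extension in $\alpha$ and the scaling invariance of the constraint set $C_p$) use no property of the measure and transcribe unchanged.
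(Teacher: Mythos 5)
Your proposal is correct and follows the same route as the paper, which proves the corollary in one line by combining Theorem~\ref{thm:equivalenoflpBM} (specialized at $p=0$ for the equivalence) with the theorems of Saroglou and of Livshyts--Marsiglietti--Nayar--Zvavitch for the ``moreover'' clause. Your elaboration of why the argument of Theorem~\ref{thm:equivalenoflpBM} is measure-agnostic --- the only measure-dependent input being the base case on scaled indicators --- is exactly the point the paper leaves implicit.
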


In other words, if the $\log$-Brunn-Minkowski inequality holds, then
for every
     $\alpha \geq - \frac{p}{n}$ and $h,f,g: \mathbb{R}^n \to [0,\infty)$ with $f$ and $g$ even and unimodal, such that
    \[
        h(z) \geq \mathcal{M}_\alpha^{(t)}(f(x),g(y))
    \]
    for $$(x,y) \in C_p(z;f,g) \coloneqq \{ (u,v) : z \in (1-t)\cdot \{f \geq f(u) \} +_p t \cdot \{ g \geq g(v) \},$$ then for any even log-concave measure $\mu$, 
    \[
        \mu( h ) \geq \mathcal{M}_r^{(t)} \left( \mu(f), \mu(g) \right),
    \]
    where $r = \frac{p\alpha}{p+n \alpha}$.


    



 \begin{rem}  As we mentioned in the introduction, the log-Brunn-Minkowski conjecture in the unconditional case follows from an application of the Prèkopa-Leindler inequality, in \cite{CS15}. It is in fact more immediate to see this through the functional analogue we introduced. Indeed, let $f,g:\mathbb{R}^n\to [0,+\infty)$ be unconditional unimodal functions and $h:\mathbb{R}^n\to [0,+\infty)$ satisfying $h(z)\geq f(u)^{1-t}g(v)^t$ for each $z\in \mathbb{R}^n$ and $u,v$ such that $z\in \{f \geq f(u)\}^{1-t}  \{g \geq g(v)\}^t$. Then, if $u,v\in [0,+\infty)^n$ and $z:=(u_1^{1-t}v_1^t,\ldots,u_n^{1-t}v_n^t)$, for each $\theta\in S^{n-1}$ and $\varepsilon_i:=sgn(\theta_i)$, setting $\tilde{u}=(\varepsilon_1u_1,\ldots,\varepsilon_nu_n), \tilde{v}=(\varepsilon_1v_1,\ldots,\varepsilon_nv_n)$ it holds
   \[
   \langle z,\theta\rangle =\sum_{i=1}^n \varepsilon_i |u_i\theta_i|^{1-t}|v_i\theta_i|^t\leq \langle \tilde{u},\theta\rangle^{1-t}\langle \tilde{v},\theta\rangle^t .
   \]
Since by unconditionality $\tilde{u}\in \{f \geq f(u)\} $ and $\tilde{v}\in \{g \geq g(v)\}$, we conclude that $z \in \{f \geq f(u)\}^{1-t}  \{g \geq g(v)\}^t$
   (equivalently $(u,v) \in C_0(z,f,g)$) and thus $h(u_1^{1-t}v_1^t,\ldots,u_n^{1-t}v_n^t)\geq f(u)^{1-t}g(v)^t$. The wanted functional inequality now follows from the multiplicative Prèkopa-Leindler inequality which may be found for example in the paper of Uhrin \cite{Uhrin}; this follows by considering $\tilde{f}(t_1,\ldots,t_n):=e^{t_1+\ldots+t_n}f(e^{t_1},\ldots,e^{t_n})$, same for $g$ and $h$, and using the classical Prèkopa-Leindler. 
   
   \end{rem}

\subsection{The case $p > 1$}

In this subsection we describe the $p >1$ a Borell-Brascamp-Lieb inequality that can be derived for $L_p$-Minkowski sums.  The ideas are similar but easier.  These results should be compared to \cite{RX21,RX23}.  Here $q$ is the H\"older conjugate to $p$ satisfying $q = \frac{p}{p-1}.$



\begin{defn} Let $p >1$, $\alpha \in [-\infty,\infty]$ and Borel measurable functions $f_0,f_1 \colon \R^n \to [0,\infty)$ and fixed $t\in [0,1]$.
For $f=(f_0,f_1)$, define 
\[
\square_{p,\alpha} f(z)= \sup_{(x,y) \in C_p(z)} \mathcal{M}_{\alpha}^{(t)}(f_0(x),f_1(y)),
\]
where
\[
C_p(z) := \left\{(x,y) \colon \text{ there exists } \lambda \in [0,1] \text{ with } z = (1-t)^\frac{1}{p}(1-\lambda)^{\frac{1}{q}}x + t^\frac{1}{p}\lambda^\frac{1}{q}y \right\}.
\]
\end{defn}
Note that in the $p > 1$ case, for Borel $A$ and $B$ then 
\begin{align} \label{eq: Lp sum for p >1}
(1-t) \cdot A +_p t \cdot B = \Phi([0,1] \times A \times B)
\end{align}
under the map $\Phi(\lambda,x,y) = (1-t)^\frac{1}{p}(1-\lambda)^{\frac{1}{q}}x + t^\frac{1}{p}\lambda^\frac{1}{q}y$.
We have the following result. 

\begin{prop}\label{p:agreementofLpsups} Let $p >1$, $\alpha \geq - \frac p n$, $t \in (0,1)$, and $f=(f_0, f_1)$, with $f_0,f_1 \colon \R^n \to [0,\infty)$ Borel measurable functions, then if
\[
    h(z) \geq \mathcal{M}_\alpha^{(t)}(f_0(x),f_1(y))
\]
for $(x,y) \in C_p(z),$
\[
    \int_{\mathbb{R}^n} h \geq \mathcal{M}_{\alpha'}^{(t)} \left( \int_{\mathbb{R}^n} f_0, \int_{\mathbb{R}^n} f_1 \right),
\]
where $\alpha' = \frac{\alpha p}{p + \alpha n}$.
\end{prop}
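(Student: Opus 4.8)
The plan is to read the hypothesis on $h$ as the pointwise statement $h \ge \square_{p,\alpha} f$, so that it suffices to bound $\int \square_{p,\alpha} f$ from below; I would do this by showing $\square_{p,\alpha}$ is a generalized sup-convolution and invoking the abstract equivalence of Theorem~\ref{thm: abstract sup convolution theorem}, whose geometric input is the $L_p$-Brunn-Minkowski inequality \eqref{e:LpBMBorel}---a genuine theorem for $p>1$, in contrast with the conjectural regime $p\in[0,1)$. The decisive simplification over the $p<1$ analysis (Theorem~\ref{thm: Lp Borell BL}) is that here the constraint set $C_p(z)$ does \emph{not} depend on $f$: by \eqref{eq: Lp sum for p >1} it is the projection onto the $(x,y)$-coordinates of $\Phi^{-1}(z)$ for the continuous map $\Phi(\lambda,x,y)=(1-t)^{1/p}(1-\lambda)^{1/q}x+t^{1/p}\lambda^{1/q}y$ on $[0,1]\times\mathbb{R}^n\times\mathbb{R}^n$. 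Thus $\square_{p,\alpha}$ is of exactly the type covered by Theorem~\ref{thm: everything is sup convolution}, the coordinate $\lambda$ being carried along but not entering the mean $\mathcal{M}_\alpha^{(t)}$.

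I would verify the three axioms of Definition~\ref{def: sup convolution} following the proof of Theorem~\ref{thm: everything is sup convolution}. Monotonicity is immediate from the supremum. For superadditivity on sliced pairs, choose for $\varepsilon>0$ a point $(x,y)\in C_p(z)$ with $\mathcal{M}_{-p/n}^{(t)}(g_0(x),g_1(y)) > \square_{p,-p/n} g(z)-\varepsilon>0$; the negative exponent forces $g_i(x_i)>0$, so the sliced hypothesis gives $f_i(x_i)\ge f_i(y_i)$ for all $y_i$, whence the very same point realizes $\square_{p,-p/n} f(z)$. Because $C_p(z)$ is fixed, the reverse triangle inequality $\mathcal{M}_{-p/n}^{(t)}(u_0,u_1)+\mathcal{M}_{-p/n}^{(t)}(v_0,v_1)\le \mathcal{M}_{-p/n}^{(t)}(u_0+v_0,u_1+v_1)$ then yields $\square_{p,-p/n} f+\square_{p,-p/n} g\le \square_{p,-p/n}(f+g)$ upon letting $\varepsilon\to0$. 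For measurability, \eqref{eq: Lp sum for p >1} gives $\square_{p,-p/n}(a\mathbbm{1}_A,b\mathbbm{1}_B)=\mathcal{M}_{-p/n}^{(t)}(a,b)\,\mathbbm{1}_{(1-t)\cdot A+_p t\cdot B}$, and $(1-t)\cdot A+_p t\cdot B=\Phi([0,1]\times A\times B)$ is the continuous image of an analytic set, hence analytic and universally measurable.

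With $\square_{p,-p/n}$ established as a sup-convolution, Theorem~\ref{thm: abstract sup convolution theorem}---applied with norm exponents $p_i=1$ and conclusion exponent $-\infty$---reduces the endpoint inequality ($\alpha=-p/n$, so $\alpha'=-\infty$) to scaled indicators $f_0=a\mathbbm{1}_A,\,f_1=b\mathbbm{1}_B$. For these, integrating the identity above and invoking \eqref{e:LpBMBorel} together with the limiting form of Hölder's inequality \eqref{eq:holder-mean} gives
\[
\int \square_{p,-p/n} f=\mathcal{M}_{-p/n}^{(t)}(a,b)\,|(1-t)\cdot A+_p t\cdot B|\ge \mathcal{M}_{-p/n}^{(t)}(a,b)\,\mathcal{M}_{p/n}^{(t)}(|A|,|B|)\ge \mathcal{M}_{-\infty}^{(t)}(a|A|,b|B|).
\]
The general case $\alpha>-p/n$ then follows by applying \eqref{eq:holder-mean} to the defining lower bound for $h$ to reduce matters to the endpoint, exactly as in the proof of Theorem~\ref{thm: Gaussian Borell Brascamp Lieb}. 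I expect the only genuine points of care---milder here than for $p<1$ precisely because $C_p(z)$ is function-independent---to be the bookkeeping of the phantom $\lambda$-coordinate within the sup-convolution framework and confirming that $\Phi([0,1]\times A\times B)$ is measurable; all the substantive geometric content is absorbed into the already-known inequality \eqref{e:LpBMBorel}.
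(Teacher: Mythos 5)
Your proposal is correct and follows essentially the same route as the paper: reduce to the endpoint $\alpha=-p/n$, verify that $\square_{p,\alpha}$ is a sup-convolution (monotonicity from the supremum, superadditivity via the $\varepsilon$-approximation and the reverse triangle inequality for $\mathcal{M}_{-p/n}^{(t)}$, measurability from the representation \eqref{eq: Lp sum for p >1} as a continuous image of a Polish space), invoke Theorem~\ref{thm: abstract sup convolution theorem} to pass to scaled indicators where \eqref{e:LpBMBorel} and H\"older conclude, and lift to $\alpha>-p/n$ by H\"older as in Theorem~\ref{thm: Gaussian Borell Brascamp Lieb}. Your additional remarks on the function-independence of $C_p(z)$ and the auxiliary $\lambda$-coordinate are consistent with, and slightly more explicit than, the paper's terser argument.
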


\begin{proof}The proof follows analogously to the results in Section \ref{sec:mesures}.  Start with the case $\alpha = - \frac{p}{n}$, where $\square_{p, \alpha} f$ is checked to be a sup-convolution  through the argument of Theorem~\ref{thm: everything is sup convolution}. Indeed, monotonicity is obvious, measurability follows from \eqref{eq: Lp sum for p >1} which realizes $(1-t) \cdot A +_p t \cdot B$ as a the continuous image of a Polish space (namely $[0,1] \times A \times B$ when $A$ and $B$ are assumed to be Borel), and the reverse triangle inequality follows from the same argument as before; for fixed $z$, find $(x,y)$ that $\mathcal{M}_{- \frac{p}{n}}^{(t)}(g_0(x),g_1(y))$ approximates $\square g(z)$ and by the hypothesis $\mathcal{M}_{- \frac{p}{n}}^{(t)}(f_0(x),f_1(y)) = \square f(z)$, and let the reverse triangle inequality for $\mathcal{M}_{- \frac  p n}^{(t)}$ conclude.  Using Theorem \ref{thm: abstract sup convolution theorem} we obtain the result for $\alpha = - \frac{p}{n}$.  For $\alpha > - \frac{p}{n}$, homogeneity again allows us to conclude.  
\end{proof}

\section{An Abstract Approach to the Equivalence of Geometric and Functional Inequalities} \label{sec:Abstract}

In this section the goal is to create an abstract framework for the equivalence between a generic inequality of Brunn-Minkowski type and its functional counterpart, with respect to more general means.  We show that a relevant sup-convolution for the Borell-Ehrhard inequality does unfortunately not yield an equivalence of the functional and geometric version of the inequality.  We close with a Borell-Brascamp-Lieb type inequality for nilpotent Lie groups.

In what follows we will consider measure spaces $(E_1, \mu_1),\ldots,(E_n, \mu_n)$ and $(F,\nu)$, and set as before $E = E_1 \times \cdots \times E_n$ and $\mu = \mu_1 \otimes \cdots \otimes \mu_n$.

\begin{defn}\label{def:generalized means}
Let $m_i \geq \mu_i(E_i)$, for $i=1,\ldots,n$, $I:=\prod_{i=1}^n[0,m_i]$ and $J:=\prod_{i=1}^n[0,m_i^2]$. We consider three generalized means
\[
    \mathcal{W},\mathcal{M}:I \to [0,\infty), \text{ and }\mathcal{N}: J \to [0,\infty),
\]
that vanish whenever a coordinate is $0$ and satisfy the following properties:
\begin{enumerate}
\item \label{item: reverse Minkowski of M}
for $u, v \in I$ such that $u+v\in I$,
\[
    \mathcal{M}(u + v) \geq \mathcal{M}(u) + \mathcal{M}(v);
\]

\item \label{item: radial convexity of N}
for $u \in J$ and $\lambda \in (0,1)$,
\[
    \mathcal{N}(\lambda u) + \mathcal{N}((1-\lambda)u) \geq \mathcal{N}(u);
\]

\item \label{item: Holder inequality for means} for $u, v \in I$, a ``H\"older" inequality is satisfied in the sense
\[
    \mathcal{W}(u) \mathcal{M}(v) \geq \mathcal{N}(uv),
\]
where $(uv)_i = u_iv_i$. 
\end{enumerate}
\end{defn}
\begin{rem}
Usually one just considers $E_i=F$ for each $i$, while $m_i\in \{1,+\infty\}$. An important example, when $m_i=+\infty$ is supplied by 
\[
    \mathcal{W}(u) \coloneqq \left( \sum_{i=1}^n t_i u_i^\alpha \right)^{\frac 1 \alpha}
\]
for $t_i > 0$ and $\alpha \geq -1$,
\[
\mathcal{M}(u) \coloneqq \left( \sum_{i=1}^n t_i u_i^p \right)^{\frac 1 p}
\]
for $p \in [-\alpha, 1]$, and 
\[
    \mathcal{N}(u) = \left( \sum_{i=1}^n t_i u_i^r \right)^{\frac 1 r}.
\]
\end{rem}

\begin{defn} To each point $z \in F$, we assume that there exists a subset of $ C(z) \subset E$. For any $f= (f_1,\dots,f_n)$, with $f_i:E_i \to (0,m_i]$, we define the $C$-sup-convolution, $\square f: F \to \mathbb{R}$ by 
    \[
        \square f(z) \coloneqq \sup_{x \in C(z)} \mathcal{M}(f(x)).
    \]
\end{defn}



\begin{prop} \label{prop: square is monotone}
  Let $\mathcal{M}$ be as above and $C(z):=\Phi^{-1}(z)$ for a continuous function $\Phi:E\to F$. Then the $C$-sup-convolution is a sup-convolution in the sense of Definition \ref{def: sup convolution}.
\end{prop}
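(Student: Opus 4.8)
The plan is to verify, in order, the three defining properties of a sup-convolution from Definition~\ref{def: sup convolution}, mirroring the proof of Theorem~\ref{thm: everything is sup convolution} but with the explicit mean $\mathcal{M}_\alpha^{(t)}$ replaced by the abstract mean $\mathcal{M}$. The only axiom of $\mathcal{M}$ that enters is the reverse Minkowski inequality, Definition~\ref{def:generalized means}~\eqref{item: reverse Minkowski of M}, together with the standing assumptions that $\mathcal{M}$ is nonnegative and vanishes whenever a coordinate is $0$; the properties of $\mathcal{N}$ and $\mathcal{W}$ play no role here. As a preliminary step I would record that $\mathcal{M}$ is coordinatewise nondecreasing on $I$: if $u \leq v$ with $u,v \in I$, then $v-u \in I$ and $u+(v-u)=v \in I$, so \eqref{item: reverse Minkowski of M} gives $\mathcal{M}(v) \geq \mathcal{M}(u) + \mathcal{M}(v-u) \geq \mathcal{M}(u)$.

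Monotonicity of $\square$ (Definition~\ref{def: sup convolution}~\eqref{item: monotonicity of sup conv}) is then immediate: for $f \leq g$ and any $x \in C(z)$, the monotonicity of $\mathcal{M}$ yields $\mathcal{M}(f(x)) \leq \mathcal{M}(g(x)) \leq \square g(z)$, and taking the supremum over $x \in C(z)$ gives $\square f(z) \leq \square g(z)$.

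For superadditivity (Definition~\ref{def: sup convolution}~\eqref{item: super additivity on sliced functions}) I would argue as in Theorem~\ref{thm: everything is sup convolution}, restricting to step functions $f,g$ so that the relevant suprema are attained. If $\square g(z)=0$ the claim follows from monotonicity; otherwise choose $x \in C(z)$ with $\mathcal{M}(g(x)) = \square g(z) > 0$. Since $\mathcal{M}$ vanishes when any coordinate is $0$, necessarily $g_i(x_i) > 0$ for every $i$, so the hypothesis on the pair $(f,g)$ forces $f_i(x_i) \geq f_i(y_i)$ for all $y_i \in E_i$; by the monotonicity of $\mathcal{M}$ this means $f(y) \leq f(x)$ coordinatewise for every $y \in C(z)$, whence $\mathcal{M}(f(x)) = \sup_{y \in C(z)} \mathcal{M}(f(y)) = \square f(z)$. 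The reverse Minkowski inequality applied to $u = f(x)$ and $v = g(x)$ — admissible since whenever $\square(f+g)$ is defined one has $(f+g)(x) \in I$ — then gives
\[
\square f(z) + \square g(z) = \mathcal{M}(f(x)) + \mathcal{M}(g(x)) \leq \mathcal{M}\big((f+g)(x)\big) \leq \square (f+g)(z).
\]

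Finally, for measurability (Definition~\ref{def: sup convolution}~\eqref{item: sup-conv preserves step functions}), for $s=(a_1\mathbbm{1}_{A_1},\dots,a_n\mathbbm{1}_{A_n})$ one computes $\square s = \mathcal{M}(a)\,\mathbbm{1}_{\Phi(A)}$ with $A = A_1 \times \cdots \times A_n$; since $\Phi$ is continuous and each $A_i$ is analytic, $\Phi(A)$ is the continuous image of a Polish space, hence analytic and universally measurable, exactly as in the proof of Theorem~\ref{thm: everything is sup convolution}. I expect the only genuinely delicate point to be the superadditivity step, namely verifying that a maximizer of $\mathcal{M}(g(\cdot))$ on $C(z)$ is simultaneously a maximizer of $\mathcal{M}(f(\cdot))$; this is where the sliced-function hypothesis and the monotonicity of $\mathcal{M}$ are both essential, and where one must also keep track of the domain constraint $u+v \in I$ in \eqref{item: reverse Minkowski of M}.
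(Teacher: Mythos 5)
Your proof is correct and takes essentially the same route as the paper's, which likewise deduces monotonicity of $\square$ from monotonicity of the mean and then defers superadditivity and measurability to the argument of Theorem~\ref{thm: everything is sup convolution}, with Definition~\ref{def:generalized means}~\eqref{item: reverse Minkowski of M} playing the role of the reverse Minkowski inequality for $\mathcal{M}_\alpha^{(t)}$. Your preliminary observation that coordinatewise monotonicity of $\mathcal{M}$ already follows from \eqref{item: reverse Minkowski of M} together with nonnegativity is a small but worthwhile addition, since the paper invokes ``monotonicity of means'' without listing it among the axioms.
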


\begin{proof}
    For $x \in C(z)$, since by assumption $f_i(x_i) \leq g_i(x_i)$ we have by monotonicity of means,
    \[
        \mathcal{M}(f_i(x_i)) \leq \mathcal{M}(g_i(x_i)).
    \]
    Taking supremums concludes the monotonicity of $\square$. 

    To prove the superadditivity and measurability properties, we proceed exactly as in Theorem \ref{thm: everything is sup convolution}. For the former we employ now property (\ref{item: reverse Minkowski of M}) from Definition \ref{def:generalized means}.
\end{proof}


\begin{defn}
    For $A = (A_1, \dots, A_n)$ with $A_i \subseteq E_i$ define
    \[
        m(A) \coloneqq m(A_1, \dots, A_n) \coloneq \{z \in F: \exists x \in (A_1\times \cdots \times A_n) \cap C(z) \}.
    \]
\end{defn}

\begin{defn}\label{def: abstract geometric inequality}
    For $\mathcal{A} = (\mathcal{A}_1, \dots, \mathcal{A}_n)$ where each $\mathcal{A}_i$ is a class of subsets on $E_i$ , we say that $(E,\mu)$ and $(F,\nu)$ satisfy a $\mathcal{A}$-restricted geometric inequality if $A=(A_1,\ldots,A_n) \in \mathcal{A}$ implies that
    \begin{equation} \label{eq: abstract geometric inequality}
        \nu(m(A)) \geq \mathcal{W}( \mu_i(A_i)).
    \end{equation}
\end{defn}



\begin{defn}
    The measure spaces $(E,\mu)$ and $(F,\nu)$ satisfy an $\mathcal{F}(\mathcal{A})$-functional inequality if $f_i \in \mathcal{F}(\mathcal{A}_i)$ implies that
    \[
        \nu_*(  \square f ) \geq \mathcal{N}(\mu_i(f_i)),
    \]
where $\nu_*(f) \coloneqq sup_{\{s \leq f: simple\}} \nu(s)$.
\end{defn}



The main theorem of this section is the next result.  

\begin{thm} \label{thm:generalgeometricfunctionalequivalence}
    If the measure spaces $(E,\mu)$ and $(F,\nu)$ satisfy an $\mathcal{A}$-restricted geometric inequality, then they satisfy a $\mathcal{F}(\mathcal{A})$-functional inequality as well. Conversely, assume that the means $\mathcal{W},\mathcal{M}$ and $\mathcal{N}$ satisfy the following ``duality'' relation: For all $u\in I$,
    \begin{equation} \label{eq: dual means}
        \sup_{(a_1,\ldots,a_n)\in I}\frac{\mathcal{N}(a_1u_1,\ldots,a_nu_n)}{\mathcal{M}(a_1,\ldots,a_n)}=\mathcal{W}(u).
    \end{equation}
Then the $\mathcal{F}(A)$-functional inequality implies the $\mathcal{A}$-restricted geometric inequality.
\end{thm}

\begin{proof}
  The proof follows as in Theorem \ref{thm: everything is sup convolution}.  One reduces to step functions via monotone convergence. 
    Thus we assume that the $f_i$ take only finitely many values and prove the result through induction on the total number of non-zero values taken by the $f_i$.  
    We write as in the proof of Theorem \ref{thm: abstract sup convolution theorem}
    \[
        f_i = \sum_{j_i =1}^{n_i} a_{j_i} \mathbbm{1}_{A_{j_i}},
    \]
    and a decomposition into $f_i^+$ and $f_i^-$, which allow,
    \begin{align*}
        \nu( \square f) 
            &\geq
                \nu( \square f^-) + \nu(\square f^+)
                    \\
            &\geq
                \mathcal{N}(\mu_i(f_i^-)) + \mathcal{N}(\mu_i(f_i^+))
                    \\
            &=
                \mathcal{N}( \lambda  \mu_i(f_i)) + \mathcal{N}((1-\lambda) \mu_i(f_i))
                    \\
            &\geq
                \mathcal{N}( \mu_i(f_i)),
    \end{align*}
    where we have used the definition of $\lambda$ in the equality and Item \eqref{item: radial convexity of N} to conclude.  To finish, for $f_i = a_i \mathbbm{1}_{A_i}$, $\square f = \mathcal{M}(a_i) \mathbbm{1}_{m(A)}$ so that
    \[
        \mu( \square f) = \mathcal{M}(a_i) \mu( m(A)) \geq \mathcal{M}(a_i) \mathcal{W}(\mu_i(A_i)) \geq \mathcal{N}(a_i \mu_i(A_i)) = \mathcal{N}(\mu_i(f_i)).
    \]

    For the converse, consider $A_i\in \mathcal{A}_i$ and $a_i\in (0,m_i]$. The $\mathcal{F}(\mathcal{A})$-functional inequality for the functions $f_i:=a_i\mathbbm{1}_{A_i}$ implies
    \[
    \nu(m(A))\mathcal{M}(a_1,\ldots,a_n)\geq \mathcal{N}(a_1\mu_1(A_1),\ldots,a_n\mu_n(A_n))
    \]
    and thus 
    \[
    \nu(m(A))\geq \sup_{a_i\in (0,m_i]}\frac{\mathcal{N}\left(a_1\mu_1(A_1),\ldots,a_n\mu_n(A_n)\right)}{\mathcal{M}(a_1,\ldots,a_n)}=\mathcal{W}(\mu_1(A_1),\ldots,\mu_n(A_n)).
    \]
\end{proof}

Numerous Brunn-Minkowski inequalities have been suggested in the setting of groups. In particular, for the Heisenberg group in \cite{Monti03, LM05, BKS2018}, and extensions thereof \cite{Bobkov2011,Tao2011,Pozuealo22}. Our machinery shows Tao's Pr\'ekopa-Leindler inequality \cite[Theorem~4]{Tao2011} for nilpotent groups is equivalent to its set theoretic formulation, and consequently to the following. Given a simply connected nilpotent Lie group $G$ and subsets $A,B$, we define their product set to be 
\[
A \cdot B = \{x \cdot y \colon x \in A, y \in B\},
\]
where $\cdot$ denotes the group product operation. For more details on nilpotent Lie groups see \cite{Tao2011,Pozuealo22}.

\begin{thm} \label{thm: nilpotent BBL}
    Let $G$ be a simply connected nilpotent Lie group of (topological) dimension $d$ and with a Haar measure $\mu$. Then, for $\alpha \in  [- \frac 1 d,1]$ and $t \in (0,1)$, if $f,g,h \colon G \to [0,\infty)$ are $\mu$-measurable functions such that
    \[
        h(x\cdot y) \geq \mathcal{M}_\alpha^{(t)}( f(x), g(y))
    \]
    holds for all $x,y$, then
    \[
        \int_G h \ d\mu \geq \mathcal{M}_{\alpha'}^{(t)} \left( \frac{\int_G  f \ d\mu}{(1-t)^d}, \frac{\int_G g \ d\mu}{t^d} \right),
    \]
    where $\alpha' = \frac{\alpha}{1 + \alpha d}$.
\end{thm}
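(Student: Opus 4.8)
The plan is to lift the set-theoretic form of Tao's inequality to the functional one through the sup-convolution formalism of Sections~\ref{sec:general} and~\ref{sec:Abstract}, mimicking the proof of Theorem~\ref{thm: Gaussian Borell Brascamp Lieb}. The geometric input is the Brunn--Minkowski inequality on a simply connected nilpotent Lie group, the set-theoretic formulation of \cite{Tao2011}: for Borel sets $A,B\subseteq G$,
\[
\mu(A\cdot B)^{1/d}\geq \mu(A)^{1/d}+\mu(B)^{1/d}.
\]
The first observation is that, for \emph{every} $t\in(0,1)$, the right-hand side can be rewritten as $\mathcal{M}_{1/d}^{(t)}\!\left(\frac{\mu(A)}{(1-t)^d},\frac{\mu(B)}{t^d}\right)=(\mu(A)^{1/d}+\mu(B)^{1/d})^d$, so that a single Brunn--Minkowski inequality encodes the whole family of weighted geometric inequalities carrying the normalizations $(1-t)^d$ and $t^d$.

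I would first prove the endpoint case $\alpha=-\tfrac1d$ (where $\alpha'=-\infty$). Take $E_1=E_2=F=G$ with Haar measure $\mu$, let $\mathcal{A}_1=\mathcal{A}_2$ be the Borel sets (which are analytic), set $\Phi(x,y)=x\cdot y$ and $C(z)=\Phi^{-1}(z)$. Since $\Phi$ is continuous and $\mathcal{M}_{-1/d}^{(t)}$ is reverse-Minkowski, Proposition~\ref{prop: square is monotone} shows $\square f(z)=\sup_{x\cdot y=z}\mathcal{M}_{-1/d}^{(t)}(f(x),g(y))$ is a sup-convolution. I would then invoke the main theorem of Section~\ref{sec:Abstract} with the generalized means $\mathcal{W}(u)=(u_1^{1/d}+u_2^{1/d})^d$, $\mathcal{M}=\mathcal{M}_{-1/d}^{(t)}$, and $\mathcal{N}(u)=\min\{\tfrac{u_1}{(1-t)^d},\tfrac{u_2}{t^d}\}$. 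Property~\eqref{item: reverse Minkowski of M} holds as $-\tfrac1d\leq1$, property~\eqref{item: radial convexity of N} holds with equality since $\mathcal{N}$ is $1$-homogeneous, and property~\eqref{item: Holder inequality for means} is exactly the degenerate Hölder inequality $\mathcal{M}_{1/d}^{(t)}(a)\mathcal{M}_{-1/d}^{(t)}(b)\geq\mathcal{M}_{-\infty}^{(t)}(ab)$ from~\eqref{eq:holder-mean} applied to $a=\left(\frac{u_1}{(1-t)^d},\frac{u_2}{t^d}\right)$, $b=v$. As the Brunn--Minkowski inequality is precisely the $\mathcal{A}$-restricted geometric inequality $\nu(m(A))\geq\mathcal{W}(\mu_i(A_i))$, the abstract theorem delivers $\int_G h\,d\mu\geq\nu_*(\square f)\geq\min\{\tfrac{\int_G f}{(1-t)^d},\tfrac{\int_G g}{t^d}\}$.

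To reach a general $\alpha\in(-\tfrac1d,1]$ I would homogenize with tailored constants, as in Theorem~\ref{thm: Gaussian Borell Brascamp Lieb}. Writing $A=\frac{\int_G f\,d\mu}{(1-t)^d}$ and $B=\frac{\int_G g\,d\mu}{t^d}$, the reverse Hölder inequality~\eqref{eq:holder-mean} (valid because $-\tfrac1d+\alpha'<0$ and $\tfrac{1}{-1/d}+\tfrac{1}{\alpha'}=\tfrac1\alpha$) gives
\[
\mathcal{M}_\alpha^{(t)}(f(x),g(y))\geq\mathcal{M}_{-1/d}^{(t)}\!\left(\tfrac{f(x)}{A},\tfrac{g(y)}{B}\right)\mathcal{M}_{\alpha'}^{(t)}(A,B).
\]
Hence $\tilde h\coloneqq h/\mathcal{M}_{\alpha'}^{(t)}(A,B)$ satisfies the endpoint hypothesis for $f/A$ and $g/B$, whose integrals are $(1-t)^d$ and $t^d$. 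The endpoint case then yields $\int_G\tilde h\,d\mu\geq\min\{1,1\}=1$, that is $\int_G h\,d\mu\geq\mathcal{M}_{\alpha'}^{(t)}(A,B)=\mathcal{M}_{\alpha'}^{(t)}\!\left(\frac{\int_G f\,d\mu}{(1-t)^d},\frac{\int_G g\,d\mu}{t^d}\right)$, which is the assertion.

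I expect the principal difficulty to be the correct bookkeeping of the normalizations $(1-t)^d,t^d$: they must be built into the mean $\mathcal{N}$ at the endpoint and into the constants $A,B$ in the homogenization step, and these two choices must be made compatibly so that the endpoint integrals collapse to $1$. The remaining points are routine: Haar measure scales as $s^d$ under exponential-coordinate dilations, so Tao's inequality has the stated $1/d$-homogeneous form; product sets $A\cdot B=\Phi(A\times B)$ are analytic, hence universally measurable, so the lower integral $\nu_*$ behaves as in the Euclidean case; and non-commutativity of $\cdot$ is immaterial, since only the continuity of $\Phi$ enters.
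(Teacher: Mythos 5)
Your proposal is correct and follows essentially the same route as the paper: reduce to scaled indicators via the sup-convolution machinery, apply Tao's set-theoretic inequality $\mu(A\cdot B)^{1/d}\geq\mu(A)^{1/d}+\mu(B)^{1/d}$, and bootstrap to general $\alpha$ with the degenerate H\"older inequality \eqref{eq:holder-mean}. The only (harmless) difference is organizational: you prove the endpoint $\alpha=-\tfrac1d$ first via the three-mean abstract theorem of Section~\ref{sec:Abstract} and then homogenize at the level of general functions, whereas the paper runs the reduction for all $\alpha\in[-\tfrac1d,1]$ at once and applies H\"older directly to the indicator case.
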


\begin{proof}
By \cite[Theorem~3]{Tao2011} or \cite[Theorem~1.1]{Pozuealo22}, we see that
\begin{align} \label{eq: nilpotent BMI}
    \mu^{\frac 1 d} (A\cdot B) \geq \mu^{\frac 1 d} (A) + \mu^{\frac 1 d}(B).
\end{align}
Taking 
\[
f \square g (z) = \sup_{(x,y) \in C(z)} \mathcal{M}_{\alpha}^{(t)}(f(x),g(y))
\]
with $C(z) = \{(x,y) : x\cdot y =z \}$, we have a sup-convolution, as $(x,y) \mapsto x \cdot y$ is continuous, Theorem \ref{thm: everything is sup convolution} applies.  By Theorem \ref{thm: abstract sup convolution theorem} it suffices to prove the result when $f = a \mathbbm{1}_A$ and $g = b \mathbbm{1}_B$ which reduces, after applying \eqref{eq: nilpotent BMI}, to the following
\begin{align*}
    \int_G f \square_\alpha g d\mu
        &=
            \mathcal{M}_{\alpha}^{(t)} (a,b) \mu(A \cdot B)
\\
        &\geq
            \mathcal{M}_{\alpha}^{(t)} (a,b) \left(\mu^{\frac 1 d} (A) + \mu^{\frac 1 d}(B) \right)^d
                \\
        &=
            \mathcal{M}_{\alpha}^{(t)} (a,b) \mathcal{M}_{\frac 1 d}^{(t)} \left( \frac{\mu(A)}{(1-t)^d}, \frac{\mu(B)}{t^d} \right)
                \\
        &\geq
            \mathcal{M}_{\alpha'}^{(t)} \left( \frac{\mu(f)}{(1-t)^d}, \frac{\mu(g)}{t^d} \right)
\end{align*}
with $\frac 1 \alpha + d = \frac 1 {\alpha'}$ so that $\alpha' = \frac{\alpha}{1 + \alpha d}$. 
\end{proof}
Note for $\alpha >1$, the relevant operator $\square$ does not define a sup-convolution, but mirroring the Euclidean case, one can obtain for instance for any $\alpha \geq - \frac 1 d,$
\begin{align*}
    h(x \cdot y ) 
        &\geq 
            \mathcal{M}_\alpha^{(t)}(f(x),g(y))
            \\
        &= 
            \mathcal{M}_\alpha^{(t)} \left( \frac{f(x)}{\mu(f)} \mu(f), \frac{g(y)}{\mu(g)} \mu(g) \right)
                \\
        &\geq 
            \mathcal{M}_{ - \frac 1 d}^{(t)} \left( \frac{f(x)} {\mu(f)},\frac{g(y)}{\mu(g)} \right) \mathcal{M}_{\alpha'} (\mu(f), \mu(g)),
\end{align*}
where $\frac 1 \alpha = \frac 1 {\alpha'} - d$, so that $\alpha' = \frac{\alpha}{1 + \alpha d}$.  Applying the $\alpha = - \frac 1 d$ result gives 
\[
    \int_G h \ d\mu \geq \mathcal{M}_{-\infty}^{(t)} \left( \frac{1}{(1-t)^{d}}, \frac{1}{t^d} \right) \mathcal{M}_{\alpha'}^{(t)} (\mu(f), \mu(g)) = \left(\frac{1}{\max\{ 1-t, t \} } \right)^{d}  \mathcal{M}_{\alpha'}^{(t)} \left( \int_G f \ d\mu , \int_G g \ d\mu \right),
\]
an analogous but weaker inequality in the case that $\alpha \leq 1$.

\par \vspace{2em}
We take this moment to consider an example of a Brunn-Minkowski-type inequality that doesn't fit into our framework.

\begin{ex}  For $t \in \R$, define 
\[
\Phi(t):=\Phi_\gamma(t)=\frac{1}{\sqrt{2\pi}}\int_{-\infty}^t e^{-\frac{t^2}{2}}dt.
\]
For each $t\in (0,1)$, define the mean $\mathcal{M}_\gamma^{(t)}=\mathcal{M}_\gamma$ on $[0,1]^2$ 
via
\[
\mathcal{M}_\gamma(a):= \Phi\left(t\Phi^{-1}(a_1)+(1-t)\Phi^{-1}(a_2) \right).
\]
The Borell-Ehrhard inequality (\cite{EHR1, EHR1.5,EHR2, BOr03}) asserts that
\[
t\Phi^{-1}(\gamma(A))+(1-t)\Phi^{-1}(\gamma(B))\leq \Phi^{-1}\left(\gamma(tA+(1-t)B)\right).
\]
For $A=[\Phi^{-1}(a_1),\Phi^{-1}(a_2)]$ and $B=[\Phi^{-1}(b_1),\Phi^{-1}(b_2)]$ it gives
\[
t\Phi^{-1}(a_2-a_1)+(1-t)\Phi^{-1}(b_2-b_1)\leq \Phi^{-1}\left(\Phi(t\Phi^{-1}(a_2)+(1-t)\Phi^{-1}(b_2) )-\Phi(t\Phi^{-1}(a_1)+(1-t)\Phi^{-1}(b_1))\right),
\]
or equivalently, 
\[
\Phi\left(t\Phi^{-1}(a_2-a_1)+(1-t)\Phi^{-1}(b_2-b_1)\right)\leq \Phi(t\Phi^{-1}(a_2)+(1-t)\Phi^{-1}(b_2) )-\Phi(t\Phi^{-1}(a_1)+(1-t)\Phi^{-1}(b_1)).
\]
In particular, the mean $\mathcal{M}_\gamma$ satisfies Property (\ref{item: reverse Minkowski of M}) of Definition \ref{def:generalized means} and the associated operation is indeed a sup-convolution, when $C(z)=\{(x,y):\ z=tx+(1-t)y\}$.

On the other hand, let $f=a_1\mathbbm{1}_{(-\infty,\Phi^{-1}(\lambda)]}$ and $g=a_2\mathbbm{1}_{(-\infty,\Phi^{-1}(\lambda)]}$, then for $h=\mathcal{M}_\gamma(a)\mathbbm{1}_{(-\infty,\Phi^{-1}(\lambda)]}$ the functional version of Borell-Ehrhard inequality gives
\[
\Phi^{-1}\left(\lambda\mathcal{M}_\gamma(a)\right)\geq t\Phi^{-1}(\lambda a_1)+(1-t)\Phi^{-1}(\lambda a_2),
\]
or in other words, 
\[
\lambda\mathcal{M}_\gamma(a)\geq \mathcal{M}_\gamma (\lambda a).
\]
This implies that the mean $\mathcal{M}_\gamma$ does not satisfy property (\ref{item: radial convexity of N}) of Definition \ref{def:generalized means}. This shows that the Borell-Ehrhard inequality does not fit the present framework.

\end{ex}

\section*{Acknowledgments}
This project was inspired in part by Gautam Aishwarya's enthusiasm for Corollary~\ref{thm: Gaussian Borell Brascamp Lieb} and its proof. The second author thanks Aishwarya and  HIM in Bonn Germany for hosting these discussions during the Dual Trimester Program ``Boolean Analysis in Computer Science".  He thanks Tomasz Tkocz for encouragement and helpful discussions as well. We would like to thank Franck Barthe for pointing out to us Uhrin's reference \cite{Uhrin}, and Alexandros Eskenazis for bringing to our attention an error in an early version of this note and for helpful comments; in particular, for bringing to our attention how our results are connected to \cite[Question~19]{CE2025}. 

Malliaris and Roberto are supported by the FP2M federation (CNRS FR 2036); this work received support from the Graduate School EUR-MINT
(State support managed by the National Research Agency for Future
Investments program bearing the reference ANR-18-EURE-0023).  
Melbourne is supported by CONAHCYT Grant CBF2023-2024-3907. 
Roysdon is supported by US NSF Grant DMS-2548742 (formerly classified as US NSF Grant DMS-2452384).

\bibliographystyle{plain}
\bibliography{bibibi}

\begin{thebibliography}{10}

\bibitem{AD25}
Gautam Aishwarya and Dongbin Li.
\newblock Entropy and functional forms of the dimensional {B}runn--{M}inkowski inequality in gauss space.
\newblock {\em arXiv preprint arXiv:2504.03114}, 2025.

\bibitem{aishwarya2023new}
Gautam Aishwarya and Liran Rotem.
\newblock New {B}runn--{M}inkowski and functional inequalities via convexity of entropy.
\newblock {\em arXiv preprint arXiv:2311.05446}, 2023.

\bibitem{AGA}
Shiri Artstein-Avidan, Apostolos Giannopoulos, and Vitali Milman.
\newblock {\em {Asymptotic} {Geometric} {Analysis}, Part I}, volume 202 of {\em Mathematical Surveys and Monographs}.
\newblock American Mathematical Society, Providence, RI, 2015.

\bibitem{AGA2}
Shiri Artstein-Avidan, Apostolos Giannopoulos, and Vitali Milman.
\newblock {\em {Asymptotic} {Geometric} {Analysis}, Part II}, volume 261 of {\em Mathematical Surveys and Monographs}.
\newblock American Mathematical Society, Providence, RI, 2021.

\bibitem{BKS2018}
Zolt\'an~M. Balogh, Alexandru Krist\'aly, and Kinga Sipos.
\newblock Geometric inequalities on {H}eisenberg groups.
\newblock {\em Calc. Var. Partial Differential Equations}, 57(2):Paper No. 61, 41 pp., 2018.

\bibitem{Bobkov2011}
Sergey Bobkov.
\newblock The {B}runn-{M}inkowski inequality in spaces with bitriangular laws of composition. {P}roblems in mathematical analysis. no. 61.
\newblock {\em J. Math. Sci. (N.Y.)}, 179(1):2--–6, 2011.

\bibitem{borell1975convex}
Christer Borell.
\newblock Convex set functions in d-space.
\newblock {\em Periodica Mathematica Hungarica}, 6(2):111--136, 1975.

\bibitem{BOr03}
Christer Borell.
\newblock The {E}hrhard inequality.
\newblock {\em C. R. Math. Acad. Sci. Paris}, 337(10):663--666, 2003.

\bibitem{Bor23}
K\'{a}roly~J B\"{o}r\"{o}czky.
\newblock The logarithmic {M}inkowski conjecture and the {$L^p$}-{M}inkowski problem.
\newblock In {\em {H}armonic {A}nalyis and {C}onvexity}, pages 83--118. De Gruyter, Berlin, 2023.

\bibitem{BLYZ12}
K\'{a}roly~J. B\"{o}r\"{o}czky, Erwin Lutwak, Deane Yang, and Gaoyong Zhang.
\newblock The log-{B}runn-{M}inkowski inequality.
\newblock {\em Adv. Math.}, 231(3-4):1974--1997, 2012.

\bibitem{brascamp1976best}
Herm~Jan Brascamp and Elliott~H Lieb.
\newblock Best constants in {Y}oung's inequality, its converse, and its generalization to more than three functions.
\newblock {\em Advances in Mathematics}, 20(2):151--173, 1976.

\bibitem{CHLL20}
Shibing Chen, Yong Huang, Li~Qi-Rui, and Jiakun Liu.
\newblock The {L}p-{B}runn-{M}inkowski inequality for $p <1$.
\newblock {\em Adv. Math.}, 368:Paper No. 107166, 21., 2020.

\bibitem{ColesantiSlides}
Andrea Colesanti.
\newblock Around the log-{B}runn-{M}inkowski inequality.
\newblock {\em Slides available at \url{https://indico.math.cnrs.fr/event/8783/contributions/8728/attachments/4126/6014/Colesanti-Toulouse2023.pdf}}, 2023.

\bibitem{CLM17}
Andrea Colesanti, Galyna~V. Livshyts, and Arnaud Marsiglietti.
\newblock On the stability of {B}runn-{M}inkowski type inequalities.
\newblock {\em J. Funct. Anal.}, 273(3):1120--1139, 2017.

\bibitem{DarioNote}
Dario Corder-Erausquin.
\newblock On the equivalence between geometric and functional versions of {B}runn-{M}inkowski inequalities.
\newblock {\em Avaiable at \url{https://webusers.imj-prg.fr/~dario.cordero/Docs/articles/2025_equiv_geo_func_BM.pdf}}, September 2025.

\bibitem{CE2025}
Dario Cordero-Erausquin and Alexandros Eskenazis.
\newblock Concavity principles for weighted marginals.
\newblock {\em arXiv preprint arXiv:2506.16941}, 2025.

\bibitem{cordero2023improved}
Dario Cordero-Erausquin and Liran Rotem.
\newblock Improved log-concavity for rotationally invariant measures of symmetric convex sets.
\newblock {\em The Annals of Probability}, 51(3):987--1003, 2023.

\bibitem{EHR1}
Antoine Ehrhard.
\newblock Sym\'{e}trisation dans l'espace de {G}auss.
\newblock {\em Math. Scand.}, 53(2):281--301, 1983.

\bibitem{EHR1.5}
Antoine Ehrhard.
\newblock In\'{e}galit\'{e}s isop\'{e}rim\'{e}triques et int\'{e}grales de {D}irichlet gaussiennes.
\newblock {\em Ann. Sci. \'{E}cole Norm. Sup. (4)}, 17(2):317--332, 1984.

\bibitem{EHR2}
Antoine Ehrhard.
\newblock \'{E}l\'{e}ments extr\'{e}maux pour les in\'{e}galit\'{e}s de {B}runn-{M}inkowski gaussiennes.
\newblock {\em Ann. Inst. H. Poincar\'{e} Probab. Statist.}, 22(2):149--168, 1986.

\bibitem{eskenazis2021dimensional}
Alexandros Eskenazis and Georgios Moschidis.
\newblock The dimensional {B}runn--{M}inkowski inequality in gauss space.
\newblock {\em Journal of Functional Analysis}, 280(6):108914, 2021.

\bibitem{Firey62}
Wm.~J. Firey.
\newblock {p}-means of convex bodies.
\newblock {\em Math. Scand.}, 10:17--24, 1962.

\bibitem{Fremlinv4}
David~H. Fremlin.
\newblock {\em Measure {T}heory. {V}ol. 4. {T}opological {M}easure {S}paces. {C}orrected second printing of the 2003 original}.
\newblock Torres Fremlin. Colchester, 2006.

\bibitem{G20}
Richard~J. Gardner.
\newblock The {B}runn-{M}inkowski inequality.
\newblock {\em Bull. Amer. Math. Soc. (N.S.)}, 39(3):355--405, 2002.

\bibitem{GHW13}
Richard~J. Gardner, Daniel Hug, and Wolfgang Weil.
\newblock Operations between sets in geometry.
\newblock {\em J. Eur. Math. Soc. (JEMS)}, 15(6):2297--–2352, 2013.

\bibitem{GK18}
Richard~J. Gardner and Markus Kiderlen.
\newblock Operations between functions.
\newblock {\em Comm. Anal. Geom.}, 26(4):787–--855, 2018.

\bibitem{GZ10}
Richard~J. Gardner and Artem Zvavitch.
\newblock Gaussian {B}runn-{M}inkowski inequalities.
\newblock {\em Trans. Amer. Math. Soc.}, 362(10):5333–--5353, 2010.

\bibitem{GRST}
Nathael Gozlan, Cyril Roberto, Paul-Marie Samson, and Prasad Tetali.
\newblock Transport proofs of some discrete variants of the {P}r\'ekopa-{L}eindler inequality.
\newblock {\em Ann. Sc. Norm. Super. Pisa Cl. Sci. (5)}, 22(3):1207--1232, 2021.

\bibitem{klartag-lehec}
Bo'az Klartag and Joseph Lehec.
\newblock Poisson processes and a log-concave {B}ernstein theorem.
\newblock {\em Studia Math.}, 247(1):85--107, 2019.

\bibitem{KM22}
Alexander Kolesnikov and Emanuel Milman.
\newblock Local $l^p$-{B}runn-{M}inkowski inequalities for $p <1$.
\newblock {\em Mem. Amer. Math. Soc.}, 277(1360):78pgs, 2022.

\bibitem{Leindler}
L\'aszl\'o Leindler.
\newblock On a certain converse of {H}\"older's inequality, {II}.
\newblock {\em Acta Sci. Math. (Szeged)}, 33(3-4):217--–223, 1972.

\bibitem{LM05}
Gian~Paolo Leonardi and Simon Masnou.
\newblock On the isoperimetric problem in the heisenberg group $\mathbb{H}^n$.
\newblock {\em Ann. Mat. Pura Appl. (4)}, 184(4):533–--553, 2005.

\bibitem{LNMZ}
Galyna Livshyts, Arnaud Marsiglietti, Piotr Nayar, and Artem Zvavitch.
\newblock On the {B}runn-{M}inkowski inequality for general measures with applications to new isoperimetric-type inequalities.
\newblock {\em Trans. Amer. Math. Soc.}, 369(12):8725--8742, 2017.

\bibitem{LE93}
Erwin Lutwak.
\newblock The {B}runn-{M}inkowski-{F}irey theory. {I}. {M}ixed volumes and the {M}inkowski problem.
\newblock {\em J. Differential Geom.}, 38(1):131--150, 1993.

\bibitem{LE96}
Erwin Lutwak.
\newblock The {B}runn-{M}inkowski-{F}irey theory. {II}. {A}ffine and geominimal surface areas.
\newblock {\em Adv. Math.}, 118(2):244--294, 1996.

\bibitem{LYZ00}
Erwin Lutwak, Deane Yang, and Gaoyong Zhang.
\newblock {$L^p$} affine isoperimetric inequalities.
\newblock {\em J. Differential Geom.}, 56(1):111--132, 2000.

\bibitem{LYZ02}
Erwin Lutwak, Deane Yang, and Gaoyong Zhang.
\newblock Sharp affine {$L^p$} sobolev inequalities.
\newblock {\em J. Differential Geom.}, 62(1):17--38, 2002.

\bibitem{LYZ04}
Erwin Lutwak, Deane Yang, and Gaoyong Zhang.
\newblock On the {$L^p$}-{M}inkowski problem.
\newblock {\em Trans. Amer. Math. Soc.}, 356(11):4359--4370, 2004.

\bibitem{LYZ04_2}
Erwin Lutwak, Deane Yang, and Gaoyong Zhang.
\newblock Volume inequalities for subspaces of {$L^p$}.
\newblock {\em J. Differential Geom.}, 68(1):159--184, 2004.

\bibitem{LYZ05_2}
Erwin Lutwak, Deane Yang, and Gaoyong Zhang.
\newblock Cram\'er-{R}ao and moment-entropy inequalities for {R}enyi entropy and generalized {F}isher information.
\newblock {\em IEEE Trans. Inform. Theory}, 51(2):473--478, 2005.

\bibitem{LYZ05}
Erwin Lutwak, Deane Yang, and Gaoyong Zhang.
\newblock {$L^p$} {J}ohn ellipsoids.
\newblock {\em Proc. London Math. Soc. (3)}, 90(2):497--520, 2005.

\bibitem{LYZ06}
Erwin Lutwak, Deane Yang, and Gaoyong Zhang.
\newblock Optimal {S}obolev norms and the {$L^p$} {M}inkowski problem.
\newblock {\em Int. Math. Res. Not.}, pages Art. ID 62987, 21, 2006.

\bibitem{LYZ12}
Erwin Lutwak, Deane Yang, and Gaoyong Zhang.
\newblock The {B}runn-{M}inkowski-{F}irey inequality for nonconvex sets.
\newblock {\em Adv. in Appl. Math.}, 48(2):407--413, 2012.

\bibitem{LZ97}
Erwin Lutwak and Gaoyong Zhang.
\newblock Blaschke-{S}antal\'{o} inequalities.
\newblock {\em J. Differential Geom.}, 47(1):1--16, 1997.

\bibitem{Marsiglietti15}
Arnaud Marsiglietti.
\newblock A note on an ${L}_p$-{B}runn-{M}inkowski inequality for convex measures in the unconditional case.
\newblock {\em Pacific J. Math}, 277(1):187--1, 2015.

\bibitem{Marsiglietti}
Arnaud Marsiglietti.
\newblock On the improvement of concavity of convex measures.
\newblock {\em Proc. Amer. Math. Soc.}, 144(2):775--786, 2016.

\bibitem{marsiglietti2024geometric}
Arnaud Marsiglietti and James Melbourne.
\newblock Geometric and functional inequalities for log-concave probability sequences.
\newblock {\em Discrete \& Computational Geometry}, 71(2):556--586, 2024.

\bibitem{Monti03}
Roberto Monti.
\newblock {B}runn-{M}inkowski and isoperimetric inequality in the {H}eisenberg group.
\newblock {\em Ann. Acad. Sci. Fenn. Math.}, 28(1):99–--109, 2003.

\bibitem{Pozuealo22}
Juli\'an Pozuelo.
\newblock A direct proof of the {B}runn-{M}inkowski inequality in nilpotent {L}ie groups.
\newblock {\em J. Math. Anal. Appl.}, 515(1):Paper No. 126427, 15 pp., 2022.

\bibitem{Prekopa}
Andr\'as Pr\'ekopa.
\newblock Logarithmic concave measures with application to stochastic programming.
\newblock {\em Acta Sci. Math. (Szeged)}, 32:301–316, 1971.

\bibitem{RX21}
Michael Roysdon and Sudan Xing.
\newblock On {$L^p$}-{B}runn-{M}inkowski type and {$L^p$}-isoperimetric type inequalities for measures.
\newblock {\em Trans. Amer. Math. Soc.}, 374(7):5003--5036, 2021.

\bibitem{RX23}
Michael Roysdon and Sudan Xing.
\newblock On the framework of $l^p$ summations for functions.
\newblock {\em J. Funct. Anal.}, 285(12):50, 2023.

\bibitem{CS15}
Christos Saroglou.
\newblock Remarks on the conjectured log-{B}runn-{M}inkowski inequality.
\newblock {\em Geom. Dedicata}, 177:353--365, 2015.

\bibitem{CS16}
Christos Saroglou.
\newblock More on logarithmic sums of convex bodies.
\newblock {\em Mathematika}, 62(3):818–--841, 2016.

\bibitem{Sh1}
Rolf Schneider.
\newblock {\em Convex {B}odies: the {B}runn-{M}inkowski {T}heory}, volume 151 of {\em Encyclopedia of {M}athematics and its {A}pplications}.
\newblock Cambridge University Press, Cambridge, Cambridge, UK, 2nd expanded edition, 2014.

\bibitem{Tao2011}
Terence Tao.
\newblock The {B}runn-{M}inkowski inequality for nilpotent groups.
\newblock {\em available at https://terrytao.wordpress.com/2011/09/16/}, 2011.

\bibitem{Uhrin}
Béla Uhrin.
\newblock Curvilinear extensions of the {B}runn-{M}inkowski-{L}usternik inequality.
\newblock {\em Adv. Math.}, 109(2):288--312, 1994.

\end{thebibliography}

\end{document}